\documentclass[10pt,letterpaper]{amsart} 

\usepackage{color}
\usepackage{multicol}
\usepackage[letterpaper, margin=1in]{geometry}
\usepackage{subfig}
\usepackage[overload]{empheq}

\usepackage{xcolor}

\usepackage{listings}
\lstset{
	basicstyle=\ttfamily,
	mathescape
}
\usepackage{amsmath, amsthm, amssymb, wasysym, verbatim, bbm, graphics}
\usepackage{enumerate}
\usepackage{url}
\usepackage{mathtools}
\usepackage[colorlinks,linkcolor=blue]{hyperref}
\usepackage{romannum}
\usepackage{xfrac}
\usepackage{float}
\usepackage{amsmath,bm}
\usepackage[title]{appendix}
\usepackage{todonotes}
\usepackage{subcaption}
\usepackage{caption}


\newtheorem{lemma}{Lemma}[section]

\newtheorem*{maintheorem*}{Main Theorem}
\theoremstyle{definition}{}

\newcommand{\Ltwo}{L^2(\mathcal{T}_h)}
\newcommand{\LtwoK}{L^2(K)}
\newcommand{\Ltwovector}{\pmb{L}^2(\mathcal{T}_h)}
\newcommand{\Ltwoboundary}{L^2(\partial\Th)}

\newcommand{\Ltwoboundaryb}{L^2(\pThb)}

\newcommand{\Hone}{H^1(\mathcal{T}_h)}

\newcommand{\Rd}{\mathbb{R}^d}
\newcommand{\Th}{\mathcal{T}_h}
\newcommand{\Pk}{\mathcal{P}^k}
\newcommand{\Pone}{\mathcal{P}^1}
\newcommand{\Pzero}{\mathcal{P}^0}
\newcommand{\Pkvector}{\pmb{\mathcal{P}}^k}

\newcommand{\Fhk}{\mathcal{F}_h^k}
\newcommand{\Uhk}{\mathcal{U}_h^k}
\newcommand{\Xhk}{\mathcal{X}_h^k}
\newcommand{\Vhk}{\pmb{\mathcal{V}}_h^k}
\newcommand{\CRspace}{\mathcal{CR}(\Th)}

\newcommand{\pThi}{\partial\Th^i}
\newcommand{\pThb}{\partial\Th^b}

\newcommand{\uh}{u_h}
\newcommand{\uhath}{\hat{u}_h}
\newcommand{\uhaverageBoundary}{\bar{u}_{h,\Kboundary}}
\newcommand{\uhathaverageEdge}{\Bar{\hat{u}}_{h,e}}

\newcommand{\uhathaverageEdgei}{\Bar{\hat{u}}_{h,e_i}}
\newcommand{\uhaverageEdgei}{\Bar{u}_{h,e_i}}
\newcommand{\uhataverage}{\Bar{\hat{u}}_h}
\newcommand{\uhaverageK}{\Bar{u}_{h,K}}
\newcommand{\omegahaverageK}{\Bar{\omega}_{h,K}}
\newcommand{\ph}{\pmb{p}_h}
\newcommand{\qh}{\pmb{q}_h}
\newcommand{\vh}{v_h}

\newcommand{\gradienth}{\nabla_h}

\newcommand{\gradientKlifting}{{\mathcal{G}}^{\partial K}_h}
\newcommand{\gradientlifting}{{\mathcal{G}}^{k}_h}
\newcommand{\CRlifting}{{\mathcal{L}}^{\mathcal{CR}}_h}

\newcommand{\Kboundary}{\partial K}

\newtheorem{thm}[lemma]{Theorem}

\newtheorem{prop}[lemma]{Proposition}
\newtheorem{rem}[lemma]{Remark}
\newtheorem{lem}[lemma]{Lemma}
\newtheorem{cor}[lemma]{Corollary}

\allowdisplaybreaks

\numberwithin{equation}{section}

\title[Discrete Poincaré and Trace Inequalities for HDG]{Discrete Poincar\'e and Trace Inequalities for the Hybridizable Discontinuous Galerkin Method
}
\date{\today}


\author[Y. Yue]{Yukun Yue}
\address[Yukun Yue]{ \newline University of Wisconsin Madison, Madison, USA.}
\email{yyue@math.wisc.edu}

\begin{document}
 \pagenumbering{arabic}
\maketitle

\begin{abstract}
In this paper, we derive discrete Poincar\'e and trace inequalities for the hybridizable discontinuous Galerkin (HDG) method. We employ the Crouzeix-Raviart space as a bridge, connecting classical discrete functional tools from Brenner’s foundational work \cite{brenner2003poincare} with hybridizable finite element spaces comprised of piecewise polynomial functions defined both within the element interiors and on the mesh skeleton. This approach yields custom-tailored inequalities that underpin the stability analysis of HDG discretizations. The resulting framework is then used to demonstrate the well-posedness and robustness of HDG-based numerical schemes for second-order elliptic problems, even under minimal regularity assumptions on the source term and boundary data.

\end{abstract}

\section{Introduction}

We take $\Omega$ as a connected, bounded, open polyhedral domain within $\mathbb{R}^d$, where $d$ is either 2 or 3. $H^1(\Omega)$ denotes the standard Sobolev space consisting of functions in $L^2(\Omega)$ (the set of square integrable functions over $\Omega$) whose first-order distributional derivatives are also in $L^2(\Omega)$. The classical Poincaré-Friedrichs inequalities for $H^1$ functions are outlined as follows \cite{nevcas1967methodes, wloka1987partial}:

\begin{equation}
    \label{eq:classical_Poincare1}
    \|f\|^2_{L^2(\Omega)}\lesssim \| \nabla f\|^2_{L^2(\Omega)}+\left(\int_\Omega f\,dx \right)^2\quad\quad \quad\quad \forall f\in H^1(\Omega),
\end{equation}
and
\begin{equation}
    \label{eq:classical_Poincare2}
    \|f\|^2_{L^2(\Omega)}\lesssim \| \nabla f\|^2_{L^2(\Omega)}+\left(\int_{\Gamma} f\,ds \right)^2\quad\quad \quad\quad \forall f\in H^1(\Omega).
\end{equation}
where $\Gamma$ represents a measurable subset of $\partial\Omega$ with a positive $(d-1)$-dimensional measure. Additionally,
the classical trace inequality is presented as follows \cite{ding1996proof,evans2022partial}:

\begin{equation}
    \label{eq:classical:trace}
    \|f\|_{L^2(\partial \Omega)}^2\lesssim \|f\|_{H^1(\Omega)}^2.
\end{equation}
Applying the Poincaré-Friedrichs inequalities from \eqref{eq:classical_Poincare1} to \eqref{eq:classical_Poincare2}, the trace inequality is thus reformulated as:
\begin{equation}
    \label{eq:classical_trace1}
    \|f\|^2_{L^2(\partial\Omega)}\lesssim \| \nabla f\|^2_{L^2(\Omega)}+\left(\int_\Omega f\,dx \right)^2\quad\quad \quad\quad \forall f\in H^1(\Omega),
\end{equation}
and
\begin{equation}
    \label{eq:classical_trace2}
    \|f\|^2_{L^2(\partial\Omega)}\lesssim \| \nabla f\|^2_{L^2(\Omega)}+\left(\int_{\Gamma} f\,ds \right)^2\quad\quad \quad\quad \forall f\in H^1(\Omega).
\end{equation}
Our primary focus is on extending the classical inequalities established in the $H^1$ space to formulations that are specifically tailored for the analysis of hybridizable discontinuous Galerkin (HDG) methods \cite{chen2023superconvergence,chen2019hdg,cockburn2010hybridizable,cockburn2017note,cockburn2009unified,cockburn2019interpolatory,du2020new,fu2015analysis,nguyen2010hybridizable,qiu2018hdg}. These methods are formulated on piecewise polynomial hybridizable spaces.

The key distinction between hybridizable spaces and other more traditional nonconforming spaces \cite{altmann2012p,brenner2015forty,Cai2000,crouzeix1973conforming}, often utilized in the discontinuous Galerkin (DG) method, is depicted in Figure \ref{fig:DG-HDG-space}. In the DG method, the approach involves using piecewise polynomial functions; for instance, as illustrated in the left part of Figure \ref{fig:DG-HDG-space}, distinct functions $u_K$ and $u_L$ are selected for elements $K$ and $L$ respectively, and these functions may be discontinuous across the boundary where the two elements meet. Conversely, within hybridizable spaces, the function values are not only determined within the elements' interiors but also the function values on the boundaries are treated as separate variables. This idea is represented in the right part of Figure \ref{fig:DG-HDG-space}, where $u_K$ and $u_L$ are defined within elements $K$ and $L$ respectively, and $\hat{u}_{e_{K,L}}$ is defined on the interface $e_{K,L}$ between the two elements.

\begin{figure}
    \centering
    \includegraphics[width=5in]{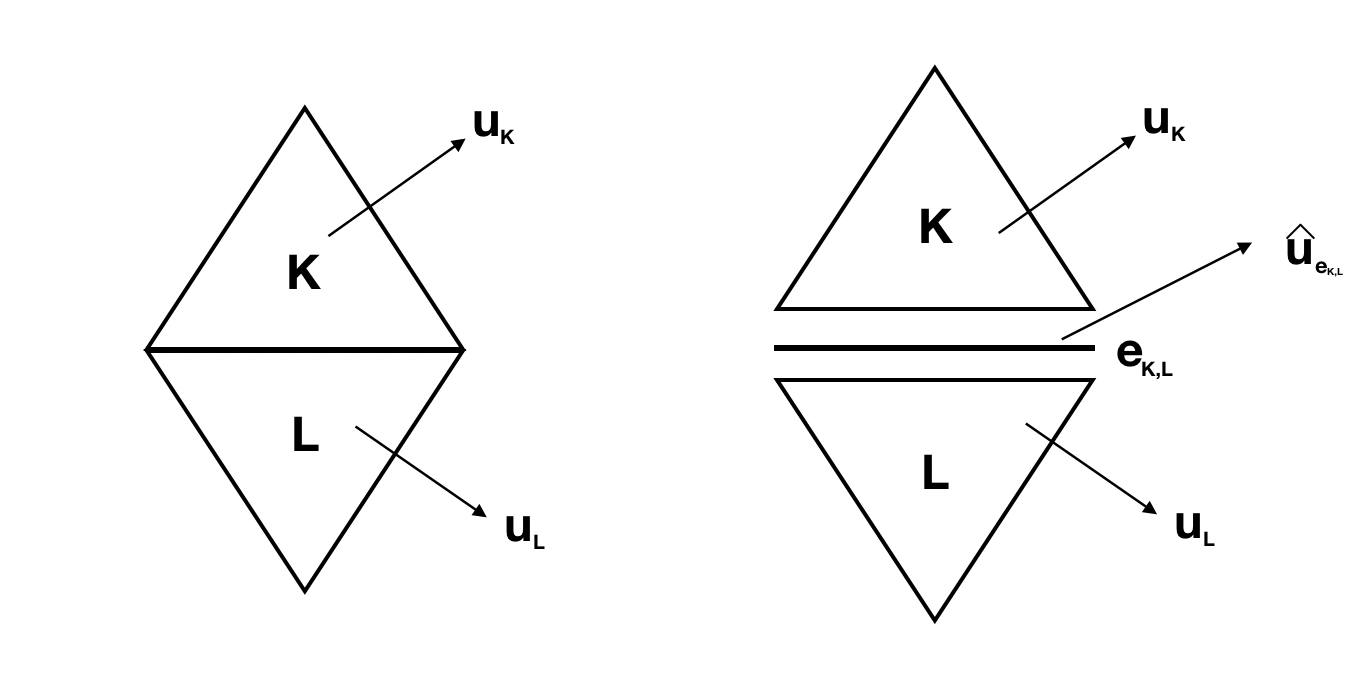}
    \caption{Comparison of Spaces: Nonconforming Space for DG Method (Left) versus Hybridizable Space for HDG or HHO Method (Right)}
    \label{fig:DG-HDG-space}
\end{figure}

In recent years, significant efforts have been made to develop analogs of Poincar\'e-Friedrichs and trace inequalities as analytical tools for nonconforming spaces, which are extensively used in the analysis of various numerical methods. For further information, we direct readers to \cite{brenner2015piecewise,brenner2003poincare,buffa2009compact,cockburn2018discrete,eymard2000finite,vohralik2005discrete,warburton2003constants} and references therein. Notably, \cite{brenner2003poincare} established a foundational discrete Poincaré inequality for piecewise $H^1$ functions, applicable to a wide range of nonconforming spaces used in DG methods. {Additionally, \cite{di2020hybrid} had a comprehensive review of functional tools, including Poincaré and trace inequalities, to analyze hybrid-type methods. Further results on Poincaré inequalities in various settings can be found in \cite[Proposition 5.4]{di2017hybrid}, \cite[Theorem 6.1]{di2010discrete}, \cite[Lemma B.14]{droniou2018gradient}, and \cite[Lemma 5.2]{eymard2010discretization}.}

{However, the direct application of these findings is insufficient to establish numerical stability for HDG methods due to challenges in controlling the jump term. Specifically, all these estimates rely on a term involving the norm of the difference between the interior variable and the trace variable, which is defined in a hybrid method and can be interpreted as a jump term, with a coefficient of order $O(\frac{1}{h})$. Here, we use the mesh-specific term $h_K$ to denote the diameter of a simplex $K$ in the mesh and take $h=\max_K h_K$, appearing in both the Poincaré inequality and the trace inequality. This poses a challenge, as in many cases, the best possible estimate for this norm obtained from the discrete formulation is only of order $O(1)$ which cannot control a term of order $O(\frac{1}{h})$.} 

{The main contribution of this work is the derivation of a discrete Poincaré inequality—with a jump term of order $O(h)$—and a discrete trace inequality of order $O(1)$ within a mixed formulation framework. A key insight is that the Crouzeix-Raviart space can be leveraged as a bridge between the HDG space and the classical discrete inequalities \cite{brenner2015piecewise,brenner2003poincare}, effectively absorbing the additional high-order jump term into the norm of the auxiliary variable introduced by the method. These refined estimates form the foundation for establishing the stability of the HDG method under minimal regularity assumptions. A detailed analysis is provided in Section \ref{subsec:discrete_poincare_brenner}, and the applications of these results to HDG methods are discussed in Section \ref{sec:HDG}.
}

We begin this work by deriving analogues of the classical Poincaré inequalities \eqref{eq:classical_Poincare1}–\eqref{eq:classical_Poincare2} for any pair of piecewise polynomial functions $(\uh,\uhath)$ in the hybridizable space $\Xhk$ (a detailed definition of $\Xhk$ is provided in Section \ref{subsec:notation}):

\begin{equation}\label{eq:poincare_mean1_intro}
          \|\uh\|^2_{\Ltwo}\lesssim {h}^2\lvert\uh \rvert_{\Hone}^2 +{h} \| \uh-\uhath\|^2_{L^2(\partial\Th)}+ \left| \CRlifting(\uhataverage) \right|_{\Hone}^2 + \left(\int_\Omega\uh\,dx \right)^2
    \end{equation}
    and
    \begin{equation}\label{eq:poincare_boundary_mean_intro}
          \|{\uh}\|^2_{\Ltwo}\lesssim {h}^2\lvert\uh \rvert_{\Hone}^2 + {h} \| \uh-\uhath\|^2_{L^2(\partial\Th)}+ \left| \CRlifting(\uhataverage) \right|_{\Hone}^2 + \left(\int_\Gamma\uh\,ds \right)^2.
    \end{equation}
In this context, $\uh$ represents a function defined within the interior of each element, whereas $\uhath$ pertains to a function specified on the mesh's skeleton.  The term $\CRlifting$ refers to a lifting operator that maps piecewise constant functions into piecewise linear functions within the Crouzeix-Raviart space, detailed further in Section \ref{subsec:CRLifting}. $\uhataverage$ is a piecewise constant function representing the average of $\uhath$ across each face of the mesh. Additional details on this are provided in Section \ref{sec:poincare}, while a comprehensive explanation of all other relevant notations is provided in Section \ref{subsec:notation}.

Moreover, we develop analogues of the trace inequalities, from \eqref{eq:classical_trace1} to \eqref{eq:classical_trace2}, for any pair of piecewise polynomial functions $(\uh,\uhath)\in\Xhk$:

 \begin{equation}\label{eq:hybrid_trace_u_intro}
         \|\uh\|_{\Ltwoboundaryb}^2\lesssim {h}\lvert \uh\rvert_{\Hone}^2+\|\uh-\uhath\|^2_{L^2(\pThb)}+(1+{h})\lvert \CRlifting(\uhataverage) \rvert_{\Hone}^2 + \left(\int_{\Gamma} \uh\,ds\right)^2
    \end{equation}
    and 
    \begin{equation}\label{eq:hybrid_trace_uhat_intro}
         \|\uhath\|_{\Ltwoboundaryb}^2\lesssim {h}\lvert \uh\rvert_{\Hone}^2+\|\uh-\uhath\|^2_{L^2(\pThb)}+(1+{h})\lvert \CRlifting(\uhataverage) \rvert_{\Hone}^2 + \left(\int_{\Gamma} \uh\,ds\right)^2.
    \end{equation}

Our proof of both the Poincaré and trace inequalities relies on the work presented in \cite{brenner2015piecewise} and \cite{brenner2003poincare}. By employing a Crouzeix-Raviart lifting, we bridge hybridizable spaces with traditional nonconforming spaces. This methodology facilitates the use of theories from \cite{brenner2015piecewise} and \cite{brenner2003poincare}, allowing us to eliminate jump terms. Consequently, this enables us to achieve a novel estimate.

  Another main contribution of this paper involves utilizing the Poincaré inequality, as detailed from \eqref{eq:poincare_mean1_intro} to \eqref{eq:poincare_boundary_mean_intro}, and the trace inequality, from \eqref{eq:hybrid_trace_u_intro} to \eqref{eq:hybrid_trace_uhat_intro}, to prove the stability of second-order elliptic equations solved by the HDG formulation. This approach represents a variation on the proof technique found in \cite{jiang2023stability}, which relies on a translation argument stemming from \eqref{eq:poincare_boundary_mean_intro} for establishing stability. In Section \ref{sec:HDG}, we will provide a proof based on the mathematical tools developed in this work directly.

    {Specifically, introducing an auxiliary variable
$$\ph = -a\nabla\uh,$$
where $a(x)$ is a matrix-valued function that is symmetric and uniformly positive definite on $\Omega$, is a common practice in the standard HDG formulation when a mixed method (see, e.g., \cite{arnold1985mixed,arnold1990mixed,arnold2002mixed}) is considered for second-order elliptic equations \cite{cockburn2004characterization,cockburn2009unified,cockburn2009superconvergent,sevilla2016tutorial} in order to develop the numerical scheme.
} For each element $K$, given $(\uh,\uhath)\in \Xhk$, one can determine $\pmb{p}_h\in \Vhk$ as it satisfies the following relation:
\begin{equation}
    \label{eq:HDG_formulation_p_nabla_u}
    (\ph,\qh)_K = (\uh,\nabla\cdot\qh)_K-\langle \uhath,\qh\cdot\pmb{n}  \rangle _{\Kboundary}
\end{equation}
where $\Vhk$ is space of piecewise-polynomial vector-valued functions that will be defined in Section \ref{subsec:notation}. Introducing such a variable typically leads to an energy term involving $\|\ph\|_{\Ltwovector}^2$, as opposed to $\|\nabla u\|_{\Ltwovector}^2$, which is more commonly encountered in classical elliptic theory \cite{evans2022partial}. The specifics of this distinction will be elaborated in Section \ref{sec:HDG}. As a result, we require a variation of the existing analytical tools, adapted for use with $\ph$. In the last section of this paper, we introduce the following findings: The Poincaré inequality from \eqref{eq:poincare_mean1_intro} to \eqref{eq:poincare_boundary_mean_intro} is revised as

  \begin{equation}\label{eq:poincare_mean2_ph_intro}
          \|\uh\|^2_{\Ltwo}\lesssim (1+{h}^2)\|\ph\|_{\Ltwovector}^2 +{h} \| \uh-\uhath\|^2_{L^2(\partial\Th)}+ \left(\int_\Omega\uh\,dx \right)^2
    \end{equation}
    and
\begin{equation}\label{eq:poincare_boundary_mean_ph}
          \|\uh\|^2_{\Ltwo}\lesssim \left(1+{h}^2\right)\|\ph\|_{\Ltwovector}^2+{h} \| \uh-\uhath\|^2_{L^2(\partial\Th)} + \left(\int_\Gamma\uhath\,ds \right)^2.
    \end{equation}
The trace inequalities \eqref{eq:hybrid_trace_u_intro}-\eqref{eq:hybrid_trace_uhat_intro} can be written as
\begin{equation}\label{eq:hybrid_trace_u_ph_intro}
        \|\uh\|_{{\Ltwoboundaryb}}^2\lesssim (1+{h})\|\ph\|^2_{\Ltwovector}+\|\uh-\uhath\|^2_{L^2(\partial \Th)}+\left(\int_{\Gamma} \uhath\,ds\right)^2
    \end{equation}
    and 
    \begin{equation}\label{eq:hybrid_trace_uhat_ph_intro}
        \|\uhath\|_{{\Ltwoboundaryb}}^2\lesssim (1+{h})\|\ph\|^2_{\Ltwovector}+\|\uh-\uhath\|^2_{L^2(\partial \Th)}+ \left(\int_{\Gamma} \uhath\,ds\right)^2.
    \end{equation}
These inequalities are crucial for establishing the stability of HDG-based numerical schemes. In practical applications, where $h$ is typically small, the factors $(1+h^2)$ and $(1+h)$ in \eqref{eq:poincare_mean2_ph_intro}–\eqref{eq:hybrid_trace_uhat_ph_intro} can be replaced by $1$. Throughout the paper, we retain the notation $h$ to clearly indicate the order of each term in the estimates.

{Towards the end of the introduction, we compare this work with two closely related studies. The first is \cite{jiang2023stability}, which also addresses the stability of the HDG method. The key distinction between that work and ours is that \cite{jiang2023stability} establishes the Poincaré inequality only under the assumption of a zero trace condition and relies on a translation argument to prove stability. In contrast, this work formulates a Poincaré inequality that depends solely on integrals over the domain or its boundary, making it more versatile and directly applicable in practical settings. Another closely related study is \cite{cockburn2018discrete}, in which the authors derive a Poincaré-type inequality by estimating the difference between a piecewise polynomial function and its local average over either the interior or boundary of a simplex. However, their work does not extend to estimating the corresponding average value, a gap that this study addresses. As a result, this work can be viewed as a direct continuation of \cite{cockburn2018discrete}, providing a more comprehensive functional tool that enhances applicability and facilitates further developments in the stability analysis of HDG methods.}

The rest of this paper is organized as follows: Section \ref{sec:pre} lays the groundwork for our study, covering essential notations like domain discretization and function spaces, mesh assumptions throughout our analysis, key technical lemmas aiding in the proof of our main results, and an introduction to the Crouzeix-Raviart space due to its critical role in our analysis. The Poincaré inequality for hybridizable spaces will be established in Section \ref{sec:poincare}, where an averaging technique and a lifting from piece-wise constant space to the Crouzeix-Raviart space will be developed as key preliminary steps for the proof. Section \ref{sec:trace} will extend the discussion to trace inequalities, employing a similar approach to the Poincaré inequality proof by utilizing a Crouzeix-Raviart element to bridge hybridizable and classical nonconforming spaces. Finally, Section \ref{sec:HDG} applies these findings to investigate the stability of the HDG method for second-order elliptic equations, 
obtaining a variant of these inequalities specifically designed for the HDG method.

\section{Preliminary}\label{sec:pre}

This section is dedicated to presenting the foundational preliminaries necessary for this paper. We will outline the notations and general assumptions frequently used throughout. Additionally, we will review several technical lemmas well-known in finite element analysis that will be applied in later discussions. A concise overview of the Crouzeix-Raviart space is also provided, due to its importance in our analysis.

\subsection{Notations and Assumptions}\label{subsec:notation}

\subsubsection{Space Discretization}

Consider a domain $\Omega$, which is an open connected polyhedral region in $\Rd$. Here, $d$ can either be 2 or 3. We define $\Th$ as a shape-regular triangulation of $\Omega$. (The exact definition of shape-regularity will be given in Section \ref{subsec:assumptions}) This means $\Th$ consists of triangles when $d=2$ and tetrahedrons for $d=3$. Each simplex in the triangulation is denoted as $K$, and so the entire domain can be written as $\Bar{\Omega} = \bigcup\limits_{K\in\Th} K$. When we have a face $e$ appearing as the intersection of two adjacent simplex, labeled as $K^+$ and $K^-$ ($e = \overline{K^+}\bigcap \overline{K^-}$), this face $e$ is called an interior face of the triangulation $\Th$. The set of all such interior faces is noted as $\pThi$. The faces that lie on the boundary are collected under $\pThb$. Hence, all faces within $\Th$ can be collectively described as $\partial \Th = \pThi \cup \pThb$. We call the collection of all the faces in the mesh to be skeleton. Additionally, the outward normal vectors for the simplexes $K^+$ and $K^-$ are represented by $\pmb{n}^+$ and $\pmb{n}^-$, respectively. However, in practice, we often drop the superscripts and simply use $\pmb{n}$ to denote the outward normal vector for a simplex $K$ at any given face. 

Regarding the mesh size of the triangulation $\Th$, as mentioned above, we use $h_K$ to indicate the diameter of a simplex $K$. This diameter is defined as the greatest distance between any two points within the simplex $K$.

\subsubsection{Function Spaces}\label{subsubsec:function_spaces}

The spaces that we will repeatedly utilize are listed here. Boldface notation will be used to indicate vector-valued functions or their corresponding spaces. The set of square integrable functions over $\Th$ within $\Omega$ is represented as $\Ltwo$. Similarly, square integrable functions on the face space, $\partial \Th$, are denoted by $\Ltwoboundary$. When addressing the function space associated with a specific element $K$, we modify the second component of this notation to reflect the domain of that element. For instance, $L^2(K)$, $L^2(\partial K)$, and $L^2(e)$ are used to represent the spaces of square integrable functions on $K$, on the boundary of $K$, and on a face $e$ belonging to $\partial \Th$, respectively. These spaces are defined with specific norms: for functions $\uh\in \Ltwo$ and $\uhath\in \Ltwoboundary$,

\begin{equation*}
    \|\uh\|_{\Ltwo} = \left(\sum_{K\in\Th}\|\uh\|^2_{L^2(K)}\right)^{\frac{1}{2}},\quad\quad \|\uhath\|_{\Ltwoboundary}=\left(\sum_{K\in\Th}\|\uhath\|^2_{L^2(\Kboundary)}\right)^{\frac{1}{2}}.
\end{equation*}
Moreover, when evaluating the $L^2$ norm of a function that is defined over the whole mesh that exists within the interior part of each element, specifically $\uh\in\Ltwo$, we can define its $L^2$ norm over the skeleton, $\partial\Th$, as well, by considering its trace on each face {(The trace is well-defined since, on each simplex $K$, $u_h$ is a polynomial.)}. Specifically, for $\uh\in\Ltwo$, its norm on $\partial\Th$ is expressed as $\|\uh\|_{\Ltwoboundary}$. {To ensure the definition is valid, for a given function $f \in \Uhk$, and for a face $e$ that serves as the boundary between two elements $K^+$ and $K^-$ with respective outward normal vectors $\pmb{n}^+$ and $\pmb{n}^-$ (i.e., $e = \overline{K^+} \cap \overline{K^-}$), we denote by $f^+$ and $f^-$ the values of $f$ on the sides of $K^+$ and $K^-$, respectively. Then the norm $\|\uh\|_{L^2(\partial \Th)}$ can be detailed as}:
\begin{equation*}
    \begin{aligned}
        \|\uh\|_{\Ltwoboundary}
        & = \left(\sum_{K\in\Th} \|\uh\|_{L^2({\partial K})}^2\right)^{\frac{1}{2}}\\
        &=\left(\sum_{e\in\pThi}\|{\uh^+} \pmb{n}_e^+\|^2_{L^2(e)}+\sum_{e\in\pThi}\|{\uh^-} \pmb{n}_e^-\|^2_{L^2(e)}+\sum_{e\in\pThb}\|\uh \pmb{n}_e\|^2_{L^2(e)}\right)^{\frac{1}{2}},
    \end{aligned}
\end{equation*}
where $\pmb{n}_e^+$ and $\pmb{n}_e^-$ denote the outward normal vectors on each side of an interior face $e$, and $\pmb{n}_e$ represents the outward normal vector at the boundary for a boundary face $e$. {Meanwhile, following the previous notation, we denote by $u_h^+$ and $u_h^-$ the values of $u_h$ on the two sides of the face $e$, analogous to $f^+$ and $f^-$.
}  Therefore, $\uh$ on each interior face is computed twice, reflecting the contributions from both sides of the face. $(\cdot,\cdot)_X$ will be used to denote inner product in $L^2(X)$ when $X$ is a collection of simplexes while we use $\langle\cdot,\cdot\rangle_X$ if $X$ is one or a collection of faces.

We designate $\Hone$ to represent for piecewise $H^1$ functions which is defined as
\begin{equation}
\label{eq:Hone_def}
    \Hone =\left\{f\in \Ltwo: f_K = f|_K\in H^1(K), \forall K\in \Th \right\}.
\end{equation}
We will use the operator $\gradienth$ to denote the broken gradient operator \cite{peraire2008compact}. In this context, $\gradienth f$ and $\gradienth\cdot \pmb{f}$ refer to functions that, when restricted to an element $K$, equal $\nabla f$ and $\nabla\cdot \pmb{f}$, respectively. The semi-norm for $\Hone$ is given by
\begin{equation}
    \label{eq:Hone_seminorm}
  \left  \lvert f\right\rvert_{\Hone}=\left(\sum_{K\in\Th}\|\nabla\uh\|^2_{L^2(K)}\right)^{\frac{1}{2}} = \|\gradienth \uh\|_{L^2(\Th)}.
\end{equation}
We want to clarify for readers that the notation $H^1(\Omega)$, which will be discussed in Section \ref{sec:trace}, refers to the standard Sobolev space. Regarding the $L^2$ space, we take $L^2(\Omega)=L^2(\Th)$ and $L^2(\partial\Omega)=\Ltwoboundaryb$ and we will not distinguish between these notations.

Next, we focus on defining the hybridizable spaces, which are central to this paper. The piecewise polynomial space $\Uhk$ within the domain is delineated as follows:
\begin{equation}\label{eq:Uhk_def}
   \Uhk=\{ f\in \Ltwo: f|_{K}\in \Pk(K), \forall K\in \Th  \},
\end{equation}
and the piecewise polynomial space $\Fhk$ over the faces is outlined as:
\begin{equation}\label{eq:Fhk_def}
    \Fhk=\{ \hat{f}\in \Ltwoboundary: \hat{f}|_e\in \Pk({e}), \forall e\in \partial\Th  \}.
\end{equation}
Here, $\Pk$ denotes the collection of polynomials with degree at most $k$. The combined space, $\Xhk$, is thus formulated as the Cartesian product of $\Uhk$ and $\Fhk$:
\begin{equation}\label{eq:Xhk_def}
    \Xhk = \Uhk \times \Fhk.
\end{equation}
Our analysis will primarily focus on elements within $\Xhk$, denoted by $(\uh,\uhath)\in \Xhk$. Additionally, we introduce the concept of a vector-valued piecewise polynomial function space, $\Vhk$, defined as:
\begin{equation}\label{eq:Vhk_def}
    \Vhk=\{ \pmb{f}\in \Ltwovector: \pmb{f}|_{K}\in \Pkvector(K), \forall K\in \Th  \}.
\end{equation}
This space becomes relevant when employing a lifting operator to transform a scalar function, defined on the mesh skeleton, into a vector function that is defined on the entire mesh. More information on this will be provided in Section \ref{subsec:boundary_lifting}. Additionally, it will be used for the discussion on HDG formulations in Section \ref{sec:HDG}.

\subsubsection{Other Notations}

To avoid proliferation of constants, we will use the notation $\lesssim$ in this paper. Specifically, when we say $f_1\lesssim f_2$, it implies the existence of a constant $C>0$, which is independent of both $f_1$ and $f_2$, ensuring $f_1\leq C f_2$.

Additionally, we follow the standard notations for jump at the boundaries of elements consisting of piecewise continuous functions \cite{brenner2008mathematical}. {For a given function $f \in \Uhk$, and for a face $e$, the jump of $f$ across the face is defined as:
\begin{equation}\label{eq:jump_def}
    [[f]]_e = f^+ \pmb{n}^+ + f^- \pmb{n}^-.
\end{equation}
Here, $e = \overline{K^+} \cap \overline{K^-}$ denotes the shared boundary between two elements $K^+$ and $K^-$ with respective outward normal vectors $\pmb{n}^+$ and $\pmb{n}^-$, and $f^+$ and $f^-$ represent the values of $f$ on the sides corresponding to $K^+$ and $K^-$, respectively, as defined earlier.}

We employ $|\cdot|$ to signify the magnitude or measure of an object. For instance, $|K|$ refers to the $d$-dimensional measure of $K$, while $|\partial K|$ and $|e|$ relate to the $(d-1)$-dimensional measures of $\partial K$ and a face $e$, respectively. Based on the shape regularity assumption, which will be detailed in Section \ref{subsec:assumptions}, we can assert that $|K|\propto (h_K)^d$ and $|\Kboundary|\propto (h_K)^{d-1}$.

For integral notation, we use $dx$ when referring to spatial integrals and $ds$ for integrals taken over faces. We also use $f|_K$ and $f|_e$ to denote the restriction of a function on a simplex $K$ or a face $e$, respectively.

\subsubsection{Mesh Assumptions}\label{subsec:assumptions}

In this section, we establish certain mesh assumptions. These criteria are broadly applicable across a range of meshes and are commonly used in analyses of DG and HDG methods, as seen in \cite{brenner2003poincare,cockburn2011analysis}.

\begin{enumerate}
    \item[\textbf{A1}] \textbf{(Shape Regularity Assumption):} There exists a constant $\kappa_{\mathcal{T}} > 0$ such that for {any mesh $\Th$}, the minimum ratio of the volume of any simplex $K$ within the mesh $\mathcal{T}_h$ to the power of its diameter (cubed for $d=3$ or squared for $d=2$) is always above $\kappa_{\mathcal{T}}$:
\begin{equation}
    \min_{K \in \mathcal{T}_h} \frac{|K|}{\text{diam}(K)^d} \geq \kappa_{\mathcal{T}}.
\end{equation}

    \item[\textbf{A2}] { \textbf{(Quasi-Uniformity Assumption):}
There exists a constant $A_{\mathrm{qu}} > 0$ such that for any mesh $\mathcal{T}_h$ and for any elements $K, K' \in \mathcal{T}_h$, the following holds:
\[
  \frac{\mathrm{diam}(K)}{\mathrm{diam}(K')} \;\le\; A_{\mathrm{qu}}.
\]
Equivalently,
\[
  \frac{\min_{K \in \mathcal{T}_h}\!\mathrm{diam}(K)}{\max_{K \in \mathcal{T}_h}\!\mathrm{diam}(K)} 
  \;\ge\;\frac{1}{A_{\mathrm{qu}}}.
\]}

    \item[\textbf{A3}] \textbf{(Hanging Node Assumption):} The mesh does not contain hanging nodes.
\end{enumerate}

Two remarks are given here regarding these assumptions:

\begin{rem}
    The Shape Regularity Assumption (A1) equivalently introduces a constant $\theta_{\mathcal{T}} > 0$, which bounds the maximum diameter of any simplex $K$ relative to the diameter of the largest inscribed sphere in $K$, for all $h_K > 0$:
\begin{equation*}
    \max_{K \in \mathcal{T}_h} \frac{h_K}{\rho_K} \leq \theta_{\mathcal{T}},
\end{equation*}
where $\rho_K$ represents the inscribed sphere's diameter. It also equivalently establishes a constant $\varphi_{\mathcal{T}} > 0$, ensuring the minimum angle within any simplex $K$ remains above $\varphi_{\mathcal{T}}$. This angle is measured in radians for $d = 2$ or steradians for $d = 3$.
\end{rem}

\begin{rem}
   { The Quasi-Uniformity Assumption (A2) ensures that the sizes of the elements in the mesh are of the same order of magnitude, meaning no element has a diameter that is drastically smaller or larger than another. Throughout this paper, we typically use $h = \max_K h_K$, and (A2) guarantees that $h_K \lesssim h$ and $\frac{1}{h_K} \lesssim \frac{1}{h}$. We will use this directly without explicitly referencing it in later discussions.}

\end{rem}

\subsection{Technical Lemmas}
This section outlines several well-known results that pave the way for the proofs developed later in this paper. We begin with the discrete {local} trace theorem in triangular simplex, summarized in the lemma below.

\begin{lem}\label{lemma:trace_inequality_simplex}
Consider a simplex $K$ in $\mathbb{R}^{d}$, with $e$ representing one of its faces. For any function $f$ belonging to $\Pk(K)$, the following inequality holds true:
\begin{equation}
  \| f\|_{L^2(e)} \leq \left(\frac{(k + 1)(k + d)}{d}\right)^{\frac{1}{2}} \left(\frac{|e|}{|K|}\right)^{\frac{1}{2}} \| f \|_{L^2(
K)}   .
\end{equation}

\end{lem}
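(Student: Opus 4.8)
The plan is to prove the discrete trace inequality on a simplex $K$ by reducing it to a computation on a fixed reference simplex $\hat{K}$ via an affine change of variables, and then exploiting the finite-dimensionality of the polynomial space $\Pk$ together with the explicit structure of the reference simplex to obtain the sharp constant $\left(\frac{(k+1)(k+d)}{d}\right)^{1/2}$. First I would set up the affine map $F_K\colon \hat{K}\to K$ sending the reference simplex to $K$, and record how the $L^2$ norms on $K$ and on its face $e$ scale under $F_K$: the volume element scales by $|K|/|\hat K|$ and the surface element on the face by $|e|/|\hat e|$. Pulling the inequality back to $\hat K$, the geometric ratio $(|e|/|K|)^{1/2}$ emerges automatically, and what remains is to establish the inequality with the constant $\left(\frac{(k+1)(k+d)}{d}\right)^{1/2}$ on the reference simplex, where the face-to-volume ratio $|\hat e|/|\hat K|$ is a fixed number.

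Next I would focus on the reference computation. The cleanest route is to avoid trace maps and instead use a direct integration-by-parts / divergence-theorem identity: for $f\in\Pk(\hat K)$ and a suitably chosen vector field $\pmb{\beta}$ (for instance a constant multiple of the position vector $x-x_0$ based at the vertex opposite the chosen face, so that $\pmb{\beta}\cdot\pmb{n}$ vanishes on all faces except $\hat e$ and is constant on $\hat e$), one writes
\begin{equation*}
  \int_{\hat e} f^2 (\pmb{\beta}\cdot\pmb{n})\,ds = \int_{\hat K} \nabla\cdot\left(f^2\pmb{\beta}\right)\,dx = \int_{\hat K}\left( 2 f\,\pmb{\beta}\cdot\nabla f + f^2\,\nabla\cdot\pmb{\beta}\right)dx.
\end{equation*}
Since $\pmb{\beta}\cdot\pmb{n}$ is a positive constant on $\hat e$, the left side controls $\|f\|_{L^2(\hat e)}^2$. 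On the right side, $\nabla\cdot\pmb{\beta}$ is the constant $d$ (up to the scaling chosen), and the term $\int f\,\pmb{\beta}\cdot\nabla f$ is where the polynomial degree enters: here $\pmb{\beta}\cdot\nabla f$ is the directional (radial) derivative of $f$, and for a polynomial of degree at most $k$ its radial derivative is controlled in $L^2$ by a factor proportional to $k$. Combining the divergence term and the derivative term, and tracking the scaling constants carefully, should yield exactly the factor $\frac{(k+1)(k+d)}{d}$.

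The main obstacle I anticipate is obtaining the \emph{sharp} constant rather than merely some dimension- and degree-dependent constant. A soft argument via equivalence of norms on the finite-dimensional space $\Pk(\hat K)$ gives existence of \emph{a} constant immediately, but reproducing the precise value $\left(\frac{(k+1)(k+d)}{d}\right)^{1/2}$ requires the right choice of vector field and a careful bound on the radial-derivative term, most likely using properties of the $L^2$-orthogonal (Jacobi/Koornwinder) polynomial basis on the simplex, for which the trace values and the norm of the radial derivative have known closed forms. An alternative, and perhaps more robust, strategy would be to appeal directly to the cited finite-element literature, where this exact inequality with this constant is established; in that case the proof reduces to quoting the reference and verifying that the scaling argument transfers the reference-simplex constant to a general $K$ without degradation, since the constant depends only on $k$ and $d$ and not on the shape of $K$. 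I would present the scaling reduction in full and then either carry out the reference estimate or cite it, depending on how self-contained the paper intends to be.
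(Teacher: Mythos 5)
The paper itself offers no proof: it simply cites \cite{warburton2003constants} (Theorem 5), so your fallback option --- quote the reference and verify that the constant transfers under affine pullback --- is exactly the paper's route, and your scaling reduction (surface measure scales by $|e|/|\hat e|$, volume by $|K|/|\hat K|$, and the reference constant depends only on $k$ and $d$) is the correct justification for that transfer.

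Your primary sketch, however, would not deliver the stated constant, for two concrete reasons. First, the claim that the radial derivative of a degree-$k$ polynomial is controlled in $L^2$ by a factor proportional to $k$ is false: Markov-type inverse estimates give $O(k^2)$, and it \emph{must} be so, since an $O(k)$ bound fed into your divergence identity would produce a trace constant of order $k$, contradicting the sharpness of $(k+1)(k+d)/d\sim k^2/d$. Second, the pairing term is circular: with $\pmb{\beta}=x-x_0$, $x_0$ the vertex opposite $\hat e$, your own identity gives
\begin{equation*}
  \int_{\hat K} f\,\pmb{\beta}\cdot\nabla f\,dx \;=\; \tfrac12\Bigl(h_{\hat e}\,\|f\|^2_{L^2(\hat e)} - d\,\|f\|^2_{L^2(\hat K)}\Bigr),
\end{equation*}
so any estimate of that term is equivalent to the trace inequality itself; the divergence identity plus an independent inverse estimate yields only the correct \emph{order} $O(k^2)$, never the sharp constant. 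The mechanism that actually produces $(k+1)(k+d)/d$ --- and this is in essence what Warburton and Hesthaven do --- is the ray (cone) parametrization $x=x_0+t(y-x_0)$, $y\in \hat e$, $t\in[0,1]$, under which $dx = h_{\hat e}\,t^{d-1}\,dt\,dS(y)$ and $f$ restricted to each ray is a univariate polynomial $p_y$ of degree at most $k$. Expanding $p_y$ in the polynomials orthonormal on $[0,1]$ with weight $t^{d-1}$ (shifted Jacobi $P^{(0,d-1)}_j$) and applying Cauchy--Schwarz gives the sharp one-dimensional bound $p_y(1)^2 \le \bigl(\sum_{j=0}^{k}\hat P_j(1)^2\bigr)\int_0^1 p_y(t)^2 t^{d-1}\,dt$ with $\sum_{j=0}^{k}\hat P_j(1)^2=(k+1)(k+d)$; integrating over $y\in\hat e$ and using $|K|=h_{\hat e}|e|/d$ yields precisely the lemma's constant. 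So the Jacobi idea you mention is the right one, but it must be applied along rays from the opposite vertex as the core of the argument, not as a patch on the divergence identity. Given that the paper only cites the result, presenting the scaling reduction plus the citation would match it; if you want self-containment, replace the divergence step by the ray decomposition above.
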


\begin{proof}
    For the proof, we refer readers to \cite{warburton2003constants}[Theorem 5].
\end{proof}

\begin{rem}
    \label{rem:trace_inequality_vector}
    The lemma mentioned primarily focuses on scalar-valued functions. To extend this principle to vector-valued functions, one can evaluate the inequality for each vector component separately and then combine the results. This approach leads to a vector-valued version of the inequality.
\end{rem}

\begin{rem}
    \label{rem:trace_inequality_hK}
    Considering a simplex $K$ and its face $e$, there is a proportional relationship between their measures, expressed as $|K| = c_dh_e|e|$. Here, $h_e$ represents the height from face $e$ within $K$, and $c_d$ is only dependent on the dimension $d$. With $h_K$ denoting the diameter of the simplex $K$, the {local} discrete trace inequality can be rephrased to reflect this geometrical relation under the shape regularity assumption, as follows:
   \begin{equation}
       \| f \|_{L^2(e)} \lesssim \frac{1}{h_K^{\frac{1}{2}}} \| f \|_{L^2(K)} .
   \end{equation}

\end{rem}

Next, we present the Poincaré inequality in a simplex $K$, which includes an estimate of the order of the Poincaré constant in this case.

\begin{lem}\label{lem:Poincare_Frederich_discrete_boundary_average}
Let $K$ be a simplex, $e$ is a face in $\Kboundary$, and $f \in H^1(K)$. We set
\begin{equation*}
f_{e}:= \frac{1}{|e|} \int_{e} f \, ds,\quad\quad f_K=\frac{1}{|K|}\int_K f\,dx.
\end{equation*}
They denote the average of $f$ over one face $e$ and the interior of simplex $K$, respectively. Then the following estimates hold
\begin{equation}\label{eq:mean_difference_optimal_Poincare_constant}
    \int_K \left(f_K-f_e  \right)^2\,dx \lesssim \, (h_K)^2 \int_K |\nabla f|^2 \, dx,
\end{equation}
\begin{equation}\label{eq:mean_optimal_Poincare_constant}
\int_K [f - f_{K}]^2 \, dx \lesssim \, (h_K)^2 \int_K |\nabla f|^2 \, dx.
\end{equation}
and
\begin{equation}\label{eq:mean_boundary_optimal_Poincare_constant}
\int_K [f - f_{e}]^2 \, dx \lesssim \, (h_K)^2 \int_K |\nabla f|^2 \, dx.
\end{equation}

\end{lem}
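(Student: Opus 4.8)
The plan is to prove the three estimates \eqref{eq:mean_difference_optimal_Poincare_constant}, \eqref{eq:mean_optimal_Poincare_constant}, and \eqref{eq:mean_boundary_optimal_Poincare_constant} by first establishing the standard Poincaré inequality \eqref{eq:mean_optimal_Poincare_constant} on the reference simplex and then transferring it to a general simplex $K$ by scaling, after which the other two follow by elementary manipulations. Concretely, let $\hat{K}$ be the fixed reference simplex and let $F_K(\hat{x}) = B_K\hat{x} + b_K$ be the affine map with $F_K(\hat{K}) = K$. The standard argument for \eqref{eq:mean_optimal_Poincare_constant} on $\hat{K}$ is a compactness/Rellich argument: the map $\hat{f} \mapsto \hat{f} - \hat{f}_{\hat{K}}$ is a bounded projection, and since $H^1(\hat{K}) \hookrightarrow L^2(\hat{K})$ compactly, a contradiction argument (a Bramble–Hilbert type lemma) yields a constant $C_{\hat{K}}$ with $\|\hat{f} - \hat{f}_{\hat{K}}\|_{L^2(\hat{K})} \le C_{\hat{K}}\,\|\hat\nabla \hat f\|_{L^2(\hat{K})}$. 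Pulling back under $F_K$ and tracking the Jacobian factors $|\det B_K|$ together with $\|B_K\|, \|B_K^{-1}\|$, shape regularity (A1) guarantees $\|B_K\| \lesssim h_K$ and $\|B_K^{-1}\| \lesssim h_K^{-1}$ with constants depending only on $\kappa_{\mathcal{T}}$, and the change of variables produces the factor $\mathrm{diam}(K)^2$. This gives \eqref{eq:mean_optimal_Poincare_constant}.

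For \eqref{eq:mean_difference_optimal_Poincare_constant}, I would observe that $f_K - f_e$ is a constant on $K$, so the left-hand side equals $|K|\,(f_K - f_e)^2$. Writing $f_K - f_e = \frac{1}{|e|}\int_e (f_K - f)\,ds$ and applying Cauchy–Schwarz on $e$ reduces the task to bounding $\|f - f_K\|_{L^2(\Omega;e)}$, which I would control by the discrete trace inequality in the form of Remark \ref{rem:trace_inequality_hK} (valid here since $f-f_K$ need not be polynomial, so I would instead use the standard continuous trace inequality $\|g\|_{L^2(\Omega;e)}^2 \lesssim h_K^{-1}\|g\|_{L^2(\Omega;K)}^2 + h_K\|\nabla g\|_{L^2(\Omega;K)}^2$ with $g = f - f_K$) and then invoke \eqref{eq:mean_optimal_Poincare_constant} to absorb the $L^2$ term. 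Collecting the measure factors $|K| \propto h_K^d$ and $|e| \propto h_K^{d-1}$ from the shape-regularity scaling relations then yields the clean $\mathrm{diam}(K)^2$ bound.

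Finally, \eqref{eq:mean_boundary_optimal_Poincare_constant} follows by the triangle inequality combined with the two preceding estimates:
\begin{equation*}
\int_K (f - f_e)^2\,dx \lesssim \int_K (f - f_K)^2\,dx + \int_K (f_K - f_e)^2\,dx,
\end{equation*}
and both terms on the right are already bounded by $\mathrm{diam}(K)^2\int_K|\nabla f|^2\,dx$ via \eqref{eq:mean_optimal_Poincare_constant} and \eqref{eq:mean_difference_optimal_Poincare_constant} respectively.

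I expect the main obstacle to lie in the careful bookkeeping of the scaling constants: one must verify that the trace constant, the reference Poincaré constant, and the Jacobian factors all combine so that the final constant depends only on $d$ and the shape-regularity parameter $\kappa_{\mathcal{T}}$ (equivalently $\theta_{\mathcal{T}}$), and not on $h_K$ or the particular element. A secondary subtlety is the trace estimate in the second step: since $f - f_K$ is a general $H^1$ function rather than a polynomial, Lemma \ref{lemma:trace_inequality_simplex} does not apply directly, so I would need the genuine (scaled) continuous multiplicative trace inequality rather than its polynomial inverse-estimate form. Everything else is routine once the reference-element estimate is in hand.
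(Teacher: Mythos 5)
Your proof is correct, but it does not follow the paper's route: the paper offers no in-text proof at all, instead citing \cite{vohralik2005discrete} (Lemma 4.1) for \eqref{eq:mean_difference_optimal_Poincare_constant} and \eqref{eq:mean_boundary_optimal_Poincare_constant}, with \eqref{eq:mean_optimal_Poincare_constant} then following as a consequence, or citing \cite{payne1960optimal} and \cite{Esposito2013} for \eqref{eq:mean_optimal_Poincare_constant} directly. Note that the derivation order is reversed: the paper's cited chain goes from the face-average estimates to the volume-average one, whereas you build upward from \eqref{eq:mean_optimal_Poincare_constant} (reference-simplex compactness plus affine scaling), obtain \eqref{eq:mean_difference_optimal_Poincare_constant} from Cauchy--Schwarz on $e$ together with the scaled continuous trace inequality, and close \eqref{eq:mean_boundary_optimal_Poincare_constant} by the triangle inequality. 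Your chain is sound, and you correctly flag the one real pitfall: Lemma \ref{lemma:trace_inequality_simplex} is a polynomial inverse estimate and cannot be applied to $f - f_K \in H^1(\Omega;K)$, so the multiplicative form $\|g\|^2_{L^2(\Omega;e)} \lesssim h_K^{-1}\|g\|^2_{L^2(\Omega;K)} + h_K\|\nabla g\|^2_{L^2(\Omega;K)}$ is indeed what is needed. One simplification you could make in the bookkeeping: for a simplex, $|K| = \frac{1}{d}\,h_e\,|e| \le \frac{1}{d}\,h_K\,|e|$ exactly, so the factor $|K|/|e| \lesssim h_K$ requires no shape regularity at all. The genuine trade-off between the two routes is in the constants: your Bramble--Hilbert/Rellich argument is non-constructive and its constant degenerates with the aspect ratio, i.e.\ it depends on $\kappa_{\mathcal{T}}$ through $\|B_K\|\,\|B_K^{-1}\|$ and the trace constant, whereas the references the paper invokes exploit convexity of the simplex --- Payne--Weinberger gives \eqref{eq:mean_optimal_Poincare_constant} with the explicit constant $\mathrm{diam}(K)/\pi$, independent of element shape, and Vohral\'ik's Lemma 4.1 gives explicit constants for the face-average versions. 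Since the paper assumes shape regularity (A1) throughout, this difference is immaterial for its purposes, and your argument is an acceptable self-contained replacement for the citation.
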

\begin{proof}
    See \cite{vohralik2005discrete}[Lemma 4.1] for \eqref{eq:mean_difference_optimal_Poincare_constant} and \eqref{eq:mean_boundary_optimal_Poincare_constant} and then \eqref{eq:mean_optimal_Poincare_constant} will follow as a consequence. Or see \cite{payne1960optimal} and \cite{Esposito2013} for \eqref{eq:mean_optimal_Poincare_constant} directly.
\end{proof}

This leads us to understand that the $L^2$ norm of the difference between a function and its mean value (either averaged over the entire simplex or just on one face of it) can be bounded by the function's $H^1$ semi-norm, multiplied by a constant that depends on the simplex's diameter.

\subsection{Crouzeix–Raviart Space}

Introduced by Crouzeix and Raviart in the early 1970s \cite{crouzeix1973conforming}, the Crouziex-Raviart (CR) finite element space is an important development in the area of non-conforming $P_1$ finite elements. Characterized by its application to both triangular ($d=2$) and tetrahedron ($d=3$) cases, the CR space uniquely defines its degrees of freedom through the evaluation of functions at the midpoint of edges or faces. This distinctive approach results in element functions that maintain continuity exclusively at these midpoints, different from the traditional conforming elements \cite{brenner2008mathematical,ciarlet2002finite,zienkiewicz2005finite} which are continuous across the entire element.

As a consequence, the discontinuity outside the midpoints of edges or faces makes CR finite element function not an element of the Sobolev space $H^1(\Omega)$ which is the standard space for second-order elliptic equations to be posed in \cite{evans2022partial,gilbarg1977elliptic}. This difference underscores the non-conforming nature of the CR space. The theoretical analysis and evolution of CR space are extensively discussed in literature. We refer readers to \cite{apel2001crouzeix,brenner2015forty,di2015extension,hansbo2003discontinuous} and the references therein.

Here, we give the precise definition of the CR space:

\begin{equation}
\label{eq:CR_def}
    \CRspace=\{ f\in \Ltwo: f|_K \in \Pone(K), \int_e [[f]]_e\,ds=0 \text{ for all } e\in\pThi\}.
\end{equation}
The condition that the integral of the function's jump across any interior edge is zero underlines that the CR space's degrees of freedom are centered on the edges' midpoints.

In this research, the CR space is utilized for the interpolation of a function $\hat{\mu}$, defined on the mesh skeleton, into a $P_1$ element within the CR space. The specifics of this interpolation method will be detailed in Section \ref{subsec:CRLifting}.

\section{Discrete Poincar\'e Inequality}\label{sec:poincare}

This section focuses on establishing one of our main results, the Poincaré inequality, within the hybridizable space $\Xhk$. Our approach involves linking hybridizable spaces to DG spaces via a lifting operator. This connection is set up by utilizing the Crouzeix-Raviart element as an intermediary. 

\subsection{Discrete Poincaré Inequality for Piecewise $H^1$ Functions}\label{subsec:discrete_poincare_brenner}

To start, we revisit the discrete Poincaré-Friedrichs inequalities that applied to classical nonconforming finite element methods and discontinuous Galerkin methods, as introduced in \cite{brenner2003poincare}. These inequalities will serve as crucial tools in deriving the Poincaré-Friedrichs inequalities for hybridizable spaces.

\begin{lem}\label{lem:brenner_poincare}
The following are the Poincar\'e--Friedrichs inequalities for $f\in \Hone$ where $\Hone$ is the space of piecewise $H^1$ functions defined in \eqref{eq:Hone_def}:

\begin{equation}\label{eq:discrete_poincare}
    \|f\|_{\Ltwo}^2 \lesssim \left[ |f|_{H^1({\Th})}^2 + \sum_{e \in \pThi} |e|^{d/(1-d)} \left| \int_{e} [[f]]_e \, ds \right|^2 + \left( \int_{\Omega} f \, dx \right)^2 \right],
\end{equation}

\begin{equation}\label{eq:discrete_friedrichs}
    \|f\|_{\Ltwo}^2 \lesssim \left[ |f|_{H^1({\Th})}^2 + \sum_{e \in \pThi} |e|^{d/(1-d)} \left| \int_{e} [[f]]_e \, ds \right|^2 + \left( \int_{\Gamma} f \, ds \right)^2 \right],
\end{equation}
where $|e|$ represents the $(d-1)$-dimensional measure of the face $e$, and $[[f]]_e$ signifies the jump of the function $f$ across the face $e$, as defined in \eqref{eq:jump_def}. The positive constant, not explicitly mentioned due to the use of the symbol $\lesssim$, relies solely on the shape regularity of the mesh ${\Th}$. {And $\Gamma$ is a subset of $\partial\Omega$ that has a positive measure}.

\end{lem}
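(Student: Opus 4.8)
The plan is to reduce the inequality for a general piecewise $H^1$ function to a purely combinatorial statement about piecewise constants, exploiting the fact that the right-hand side of \eqref{eq:discrete_poincare} only sees the \emph{mean} $\int_e [[f]]_e\,ds$ of each jump and not its full $L^2(\Omega;e)$ norm. Write $\Pi_0 f$ for the piecewise-constant function whose value on $K$ is the element average $f_K=\frac{1}{|K|}\int_K f\,dx$. Using the element Poincaré estimate \eqref{eq:mean_optimal_Poincare_constant} of Lemma \ref{lem:Poincare_Frederich_discrete_boundary_average} together with $\mathrm{diam}(K)\le\mathrm{diam}(\Omega)$, I would first bound $\|f-\Pi_0 f\|_{\Ltwo}^2\lesssim\sum_{K\in\Th}\mathrm{diam}(K)^2|f|_{H^1(\Omega;K)}^2\lesssim|f|_{H^1(\Omega,P)}^2$, so that $\|f\|_{\Ltwo}$ is controlled once $\|\Pi_0 f\|_{\Ltwo}$ is. Since $\Pi_0$ preserves the integral, $\int_\Omega\Pi_0 f\,dx=\int_\Omega f\,dx$, and choosing the subtracted constant to be $c:=\frac{1}{|\Omega|}\int_\Omega f\,dx$ reproduces exactly the mean term, because $|c|^2|\Omega|=\frac{1}{|\Omega|}\bigl(\int_\Omega f\,dx\bigr)^2$.

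Next I would pass from the jumps of $f$ to the jumps of $\Pi_0 f$. Writing $\int_e [[\Pi_0 f]]_e\,ds=\int_e [[f]]_e\,ds+\int_e [[\Pi_0 f-f]]_e\,ds$, the error term is the mean over $e$ of the jump of the oscillation $f-\Pi_0 f$. Estimating its face trace by the standard scaled trace inequality $\|v\|_{L^2(\Omega;e)}^2\lesssim h_K^{-1}\|v\|_{L^2(\Omega;K)}^2+h_K|v|_{H^1(\Omega;K)}^2$ and then applying \eqref{eq:mean_optimal_Poincare_constant} gives $\|f-\Pi_0 f\|_{L^2(\Omega;e)}^2\lesssim h_K|f|_{H^1(\Omega;K)}^2$. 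After multiplying by the weight $|e|^{d/(1-d)}$ and using $|e|\propto h_K^{d-1}$, the exponents cancel and each face contributes $\lesssim|f|_{H^1(\Omega;K)}^2$; summing over faces yields $\lesssim|f|_{H^1(\Omega,P)}^2$. Thus the weighted jump energy of $\Pi_0 f$ differs from that of $f$ only by a broken $H^1$ term already present on the right-hand side, and it suffices to prove the inequality for the piecewise constant $\Pi_0 f$.

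The core of the argument is therefore a discrete Poincaré inequality for piecewise constants: for $g$ constant on each $K$ with value $g_K$,
\begin{equation*}
\|g-\bar g\|_{\Ltwo}^2=\sum_{K\in\Th}|K|\,(g_K-\bar g)^2\lesssim\sum_{e\in\pThi}|e|^{d/(1-d)}\Bigl|\int_e [[g]]_e\,ds\Bigr|^2,
\end{equation*}
where $\bar g=\frac{1}{|\Omega|}\int_\Omega g\,dx$ and, by \eqref{eq:jump_def}, $\int_e [[g]]_e\,ds=(g_{K^+}-g_{K^-})\,|e|\,\pmb{n}^+$. This is a Poincaré inequality on the dual graph of the mesh, with the exponent $d/(1-d)$ calibrated precisely so that the estimate is dimensionally consistent with the continuous one. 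I expect this to be the main obstacle: a naive telescoping argument that connects two elements through a chain of neighbours produces a constant degrading with the number of elements, hence with $h\to 0$, and so fails to give the required shape-regularity-only bound. The clean route is to compare $g$ with a genuinely conforming ($H^1(\Omega)$) companion built from the face data, whose $H^1$ energy is controlled by the weighted jump sum exactly because of the $|e|^{d/(1-d)}$ weight, and then to invoke the classical Poincaré inequality \eqref{eq:classical_Poincare1} on that companion, absorbing the difference between $g$ and the companion into the same jump sum.

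Finally, for the Friedrichs form \eqref{eq:discrete_friedrichs} I would run the identical scheme but replace the domain mean by the boundary mean over $\Gamma$: prove the piecewise-constant estimate with $\bar g$ replaced by $\frac{1}{|\Gamma|}\int_\Gamma g\,ds$ by comparing against a conforming companion and invoking the classical Friedrichs inequality \eqref{eq:classical_Poincare2} in place of \eqref{eq:classical_Poincare1}, and control the discrepancy between $\int_\Gamma f\,ds$ and $\int_\Gamma\Pi_0 f\,ds$ by a boundary trace estimate of the same type used in the second paragraph. The substitution of \eqref{eq:classical_Poincare2} for \eqref{eq:classical_Poincare1} in the comparison step is the only change.
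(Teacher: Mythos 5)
The paper does not actually prove this lemma: its ``proof'' is the single line ``See \cite{brenner2003poincare}.'' So the relevant comparison is with the argument in that reference, and your outline reconstructs its architecture essentially correctly: reduce to the piecewise-constant function $\Pi_0 f$ of element means via the elementwise Poincaré estimate (Lemma \ref{lem:Poincare_Frederich_discrete_boundary_average}), transfer the jump functionals from $f$ to $\Pi_0 f$ at the cost of broken $H^1$ terms, and then prove the piecewise-constant inequality by comparison with an $H^1(\Omega)$-conforming companion to which the classical inequalities \eqref{eq:classical_Poincare1}--\eqref{eq:classical_Poincare2} apply. Your exponent bookkeeping checks out: $|e|^{d/(1-d)}\cdot|e|\sim h_K^{-1}$ cancels against $\|f-\Pi_0 f\|^2_{L^2(\Omega;e)}\lesssim h_K\,|f|^2_{H^1(\Omega;K)}$ (note the scaled trace inequality you invoke here is the $H^1$ version, not the polynomial inverse-trace Lemma \ref{lemma:trace_inequality_simplex} of this paper — standard, but worth saying), and $d/(1-d)$ is indeed the unique scale-invariant weight matching the broken $H^1$ seminorm. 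You are also right that naive chaining of neighbours produces a constant that blows up as $h\to 0$, and that an averaging/enriching construction into a conforming space is what yields a constant depending only on shape regularity — this is precisely the device underlying the proof in \cite{brenner2003poincare}.

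The one place where your proposal is a plan rather than a proof is, as you yourself concede, the heart of the matter: the conforming companion is never constructed and its energy bound is asserted. On the simplicial meshes of this paper (assumptions A1--A2) it can be closed as follows: let $w$ be the continuous $\Pone$ function whose value at each vertex $p$ is the average of the constants $g_K$ over the elements of the vertex patch $\omega_p$. Then on each $K$ one has $|w|^2_{H^1(\Omega;K)}\lesssim h_K^{d-2}\sum (\delta g)^2$, the sum running over differences of element values across faces in the patch of $K$: any two elements sharing a vertex are joined by a chain of face-adjacent elements whose length is bounded by shape regularity, so each vertex-value difference telescopes into face jumps, and $h_K^{d-2}\sim|e|^{d/(1-d)}|e|^2$ converts the patch sum into exactly your weighted jump energy; similarly $\|g-w\|^2_{\LtwoK}\lesssim h_K^{2}$ times the same patch sum, so the discrepancy is absorbed. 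For \eqref{eq:discrete_friedrichs} you must additionally check that the boundary functional survives \emph{both} replacements $f\mapsto\Pi_0 f\mapsto w$: the first discrepancy $\bigl|\int_\Gamma (f-\Pi_0 f)\,ds\bigr|^2$ is handled by the trace estimate as you say, but $\int_\Gamma(\Pi_0 f-w)\,ds$ needs the same patch estimate applied on boundary faces (and uses that $\Gamma$ has positive measure, plus dual-graph connectivity of $\Th$, which holds for a triangulation of the connected polyhedral $\Omega$). None of this contradicts your route — it is the missing third of the argument, and it is exactly the portion to which \cite{brenner2003poincare} devotes most of its effort; as submitted, the proposal is a correct strategy with the core lemma left unproven.
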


\begin{proof}
    See \cite{brenner2003poincare}.
\end{proof}
The following corollary is an immediate result when the integral of jump at each interior face vanishes, namely,
\begin{equation}\label{eq:zero_jump_condition}
    \int_e [[f]]_e\,ds={0}
\end{equation}
for every $e\in\pThi$.

\begin{cor}\label{cor:Poincare_no_jump}
    If condition \eqref{eq:zero_jump_condition} holds, then \eqref{eq:discrete_poincare}-\eqref{eq:discrete_friedrichs} will reduce to 
    \begin{equation}\label{eq:discrete_poincare_no_jump}
    \|f\|_{\Ltwo}^2 \lesssim |f|_{H^1({\Th})}^2  + \left( \int_{\Omega} f \, dx \right)^2,
\end{equation}
and
\begin{equation}\label{eq:discrete_friedrichs_no_jump}
    \|f\|_{\Ltwo}^2 \lesssim  |f|_{H^1({\Th})}^2 + \left( \int_{\Gamma} f \, ds \right)^2.
\end{equation}
\end{cor}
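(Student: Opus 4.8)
The plan is to recognize that the target inequalities \eqref{eq:discrete_poincare_no_jump} and \eqref{eq:discrete_friedrichs_no_jump} are exactly the specializations of \eqref{eq:discrete_poincare} and \eqref{eq:discrete_friedrichs} obtained by discarding the interior-face jump contributions, so the whole task collapses to verifying that those contributions vanish under hypothesis \eqref{eq:zero_jump_condition}. I would begin from the two bounds supplied by Lemma \ref{lem:brenner_poincare}, which hold for every $f\in\Hone$, and single out the one term by which each estimate differs from its reduced form, namely
\begin{equation*}
    S(f) := \sum_{e \in \pThi} |e|^{d/(1-d)} \left| \int_{e} [[f]]_e \, ds \right|^2.
\end{equation*}

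The next step is to apply the hypothesis directly. Condition \eqref{eq:zero_jump_condition} states that $\int_e [[f]]_e\,ds = 0$ for each interior face $e\in\pThi$, so every summand of $S(f)$ is a finite, nonnegative weight $|e|^{d/(1-d)}$---finite because each face carries strictly positive $(d-1)$-dimensional measure---multiplied by zero. Hence $S(f)=0$ term by term, and inserting $S(f)=0$ into \eqref{eq:discrete_poincare} and \eqref{eq:discrete_friedrichs} reproduces \eqref{eq:discrete_poincare_no_jump} and \eqref{eq:discrete_friedrichs_no_jump} verbatim, with the implied constant unchanged and still governed only by the shape regularity of the partition.

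I do not anticipate any genuine obstacle: the statement is a direct specialization of Lemma \ref{lem:brenner_poincare}, and the only detail meriting a moment's attention is that the weights $|e|^{d/(1-d)}$ are well defined and finite, which is immediate from $|e|>0$. It is worth flagging, looking ahead, that this innocuous reduction is precisely the mechanism the paper will exploit later: once a function is lifted into the Crouzeix--Raviart space, the defining constraint \eqref{eq:CR_def} enforces exactly \eqref{eq:zero_jump_condition}, so Corollary \ref{cor:Poincare_no_jump} will apply to the lifted function and eliminate the jump term that otherwise obstructs the hybridizable estimates.
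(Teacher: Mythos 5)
Your proposal is correct and matches the paper's reasoning exactly: the paper treats this corollary as immediate, since under condition \eqref{eq:zero_jump_condition} every summand of the jump term in Lemma \ref{lem:brenner_poincare} vanishes, and \eqref{eq:discrete_poincare_no_jump}--\eqref{eq:discrete_friedrichs_no_jump} follow with the same implied constant. Your additional remarks on the finiteness of the weights $|e|^{d/(1-d)}$ and the forthcoming application to Crouzeix--Raviart liftings are accurate but not needed beyond the paper's one-line justification.
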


For any function $f$ within the CR space, as defined in \eqref{eq:CR_def}, it satisfies the condition given in \eqref{eq:zero_jump_condition}. Therefore, the two types of Poincaré-Friedrichs inequalities mentioned above, \eqref{eq:discrete_poincare_no_jump} and \eqref{eq:discrete_friedrichs_no_jump}, which apply to cases without jumps in integral sense, are naturally applicable to a CR element.

We want to point out that Lemma \ref{lem:brenner_poincare} is broadly applicable and serves as a cornerstone in the theory of the DG method \cite{di2011mathematical,ern2021finite,john2016finite,riviere2008discontinuous}. This lemma allows us to generalize the concept of derivatives to discontinuous spaces and lays down a framework for designing a generalized gradient operator. It illustrates what gradients look like within such spaces. However, directly applying these principles to hybridizable spaces does not suffice to achieve the inequalities presented in \eqref{eq:poincare_mean1_intro}-\eqref{eq:poincare_boundary_mean_intro}. In fact, when considering an element $(\uh,\uhath)\in\Xhk$, if the jump term $ \sum_{e \in \pThi} |e|^{d/(1-d)} \left| \int_{e} [[\uh]]_e \, ds \right|^2$ in \eqref{eq:discrete_poincare} and \eqref{eq:discrete_friedrichs} remains, the best estimate we could expect for this term would be
\begin{equation*}
    \sum_{e \in \pThi} |e|^{d/(1-d)} \left| \int_{e} [[\uh]]_e \, ds \right|^2\leq \sum_{e \in \pThi} |e|^{1/(1-d)} \left| \int_{e} \left([[\uh]]_e \right)^2\, ds \right|\lesssim  \sum_{e \in \pThi} {h_K}^{\frac{1}{1-d}}\|\uh-\uhath\|^2_{L^2(e)}.
\end{equation*}
For example, in the two-dimensional case ($d=2$), bounding the associated jump term requires an $O(\frac{1}{h})$ factor, which is typically difficult to achieve in an HDG formulation. This challenge indicates that the jump term $\sum_{e \in \pThi} |e|^{d/(1-d)} \left| \int_{e} [[\uh]]_e \, ds \right|^2$
should be treated separately. Accordingly, we introduce the CR lifting operator in the following subsection to effectively handle this term.

\subsection{CR Lifting Operator} 
\label{subsec:CRLifting}

Here we introduce a lifting operator mapping a piece-wise constant function defined on skeleton of the mesh to be a function defined in the CR space. We define $\CRlifting: \Pzero(\partial \Th)\to \CRspace$ as 
\begin{equation}\label{eq:CR_lifting_def}
    \CRlifting(\hat{\mu})(c_e) = \hat{\mu}|_{e}
\end{equation}
for every $\hat{\mu} \in \Pzero(\partial\Th)$ and face $e \in \partial\Th$,  
{where $\Pzero(\partial \Th)$ denotes the space of piecewise constant functions on the skeleton of $\Th$}. Here $c_e$ denotes the center of the face $e$ and the left-hand side of \eqref{eq:CR_lifting_def} is to evaluate $ \CRlifting(\hat{\mu})$ at point $c_e$. In other words, the lifting operator defines a CR element by determining its degree of freedom lying on the centers of each side of each element.

The following result describes the quantity relation between $\hat{\mu}$ and $ \CRlifting(\hat{\mu})$ when it is restricted in $K$.

\begin{lem}
    \label{lem:CRLifting_estimate}
    We take the restriction of $\hat{\mu}\in\Pzero(\partial\Th)$ on  an element $K$ as $\hat{\mu}_K=\hat{\mu}|_K$, then
    \begin{equation}
        \|\hat{\mu}_K\|^2_{L^2(
    \Kboundary)}\leq \|\CRlifting(\hat{\mu}_K)\|^2_{L^2(\partial K)} = \|\CRlifting(\hat{\mu})|_K\|^2_{L^2(\partial K)}.
    \end{equation}
\end{lem}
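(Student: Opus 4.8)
The plan is to reduce the claimed inequality to a face-by-face estimate and then invoke the mean-value property of affine functions together with the Cauchy--Schwarz inequality. First I would observe that both sides split as sums over the $d+1$ faces of $K$. Writing $e_0,\dots,e_d$ for the faces of $K$ and $\mu_i := \hat{\mu}|_{e_i}$ for the constant value that $\hat{\mu}_K$ takes on $e_i$, the left-hand norm is simply $\|\hat{\mu}_K\|^2_{L^2(\Omega;\Kboundary)} = \sum_{i=0}^d \mu_i^2\,|e_i|$, while, setting $p := \CRlifting(\hat{\mu})|_K \in \Pone(K)$, the right-hand norm is $\|p\|^2_{L^2(\Omega;\partial K)} = \sum_{i=0}^d \|p\|^2_{L^2(\Omega;e_i)}$. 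The final equality in the statement is merely the locality of the lifting: by the definition \eqref{eq:CR_lifting_def}, the degrees of freedom of $\CRlifting(\hat{\mu})$ attached to the faces of $K$ depend only on the values of $\hat{\mu}$ on those same faces, i.e. on $\hat{\mu}_K$, so $\CRlifting(\hat{\mu}_K)$ and $\CRlifting(\hat{\mu})|_K$ agree on $\partial K$.

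It therefore suffices to establish, for each individual face $e_i$, the local inequality $\mu_i^2\,|e_i| \leq \|p\|^2_{L^2(\Omega;e_i)}$. The key step is to recognize $\mu_i$ as an average. Since $p$ is affine, its restriction to each face $e_i$ is affine on that face, and hence the mean value of $p$ over $e_i$ coincides with its value at the barycenter $c_{e_i}$. Combined with the defining relation $\CRlifting(\hat{\mu})(c_{e_i}) = \hat{\mu}|_{e_i} = \mu_i$ from \eqref{eq:CR_lifting_def}, this identifies $\mu_i$ as exactly the average of $p$ over $e_i$, namely $\mu_i = \frac{1}{|e_i|}\int_{e_i} p\,ds$.

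The conclusion then follows from Cauchy--Schwarz (equivalently Jensen's inequality): the square of the average of $p$ over $e_i$ is bounded by the average of $p^2$, that is, $\mu_i^2 \leq \frac{1}{|e_i|}\int_{e_i} p^2\,ds = \frac{1}{|e_i|}\|p\|^2_{L^2(\Omega;e_i)}$. Multiplying through by $|e_i|$ yields the desired face-wise bound $\mu_i^2\,|e_i| \leq \|p\|^2_{L^2(\Omega;e_i)}$, and summing over $i=0,\dots,d$ recovers the full inequality on $\partial K$.

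I expect the only genuine subtlety to be the identification of $\mu_i$ with a mean value, which rests on two facts: that the ``center'' $c_e$ appearing in \eqref{eq:CR_lifting_def} is the barycenter of the face, and that an affine function attains its average over a simplex precisely at that barycenter. Both are standard, so once they are made explicit the estimate reduces to a one-line application of Cauchy--Schwarz, and there is no real obstacle beyond stating this averaging identity clearly.
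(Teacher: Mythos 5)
Your proof is correct and follows essentially the same route as the paper: the paper likewise identifies the constant value $\hat{\mu}_{K,e}$ with the mean of the lifted affine function over the face (a consequence of linearity and the barycenter degrees of freedom), applies the Cauchy--Schwarz inequality face-wise, and sums over the faces of $K$. Your version merely makes explicit the averaging identity that the paper compresses into the phrase ``due to the linearity of $\CRlifting(\hat{\mu}_K)$ and the definition of the CR lifting operator,'' which is a fair and complete elaboration.
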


\begin{proof}
    Consider a face $e$ within $\Kboundary$ and let $\hat{\mu}_{K,e}$ represent the value of $\mu$ on face $e$. We can express this term in another way as
    \begin{equation*}
  \hat{\mu}_{K,e}=\frac{1}{|e|}\int_e \CRlifting(\hat{\mu}_K)\,ds
\end{equation*}
due to the linearity of $\CRlifting(\hat{\mu}_K)$ and the definition of the CR lifting operator. Applying the Cauchy-Schwarz inequality, we get
\begin{equation}
    \label{eq:uhat_average_lifting_comparison_estimate}(\hat{\mu}_{K,e})^2=\frac{1}{|e|^2}\left[\int_e \CRlifting(\hat{\mu}_K)\,ds\right]^2 \leq \frac{1}{|e|}\int_e \left[\CRlifting(\hat{\mu}_K)\right]^2\,ds=\frac{1}{|e|}\left\|\CRlifting(\hat{\mu}_K)\right\|^2_{L^2(e)}.
\end{equation}
Following this,
\begin{align*}
    \|\hat{\mu}_K\|^2_{L^2(\Kboundary)} = \sum_{e\in \Kboundary}|e|(\hat{\mu}_{K,e})^2\leq\sum_{e\in \Kboundary}\left\|\CRlifting(\hat{\mu}_K)\right\|^2_{L^2(e)}=\|\CRlifting(\hat{\mu}_K)\|^2_{L^2(\partial K)}.
\end{align*}
This concludes the proof.
\end{proof}

\subsection{Poincar\'e Inequalities in $\Xhk$}

In this part, our goal is to prove the Poincaré inequalities for the space $\Xhk$. Before proceeding with the proof, it's crucial to recognize that $\uhath\in\Fhk$ is generally a {piecewise polynomial function instead of a piecewise constant function}. For the CR lifting operator to be applicable, we need to transform these piecewise polynomial functions into piecewise constant ones. Thus, for a boundary face $e$ and a function $\uhath\in\Fhk$, we take:
\begin{equation}
\label{eq:side_average_hat}
    \uhathaverageEdge = \frac{1}{|e|}\int_e \uhath\,ds.
\end{equation}
Then $\uhataverage\in \Pzero(\partial \Th)$ is introduced as a piecewise constant function that averages $\uhath$ on each boundary segment, defined by:
    \begin{equation}\label{eq:uhat_average_def}
    \uhataverage|_e = \uhathaverageEdge.
\end{equation}
This procedure converts $\uhath$ into the piecewise constant function $\uhataverage$ within $\Fhk$, facilitating the unique definition of a corresponding CR element by the CR lifting operator $\CRlifting$.

With these preparations, we present the following findings, detailing the connection between the $L^2$ norm of $\uh$ and that of the function derived from $\CRlifting(\uhataverage)$, along with the jump terms existing on the faces.

\begin{prop}
\label{prop:L2_estimate}
    Let $(\uh,\uhath)\in \Xhk$. Then the following local inequality holds in each element $K$:
     \begin{equation}\label{eq:L2_estimate_local_K}
        \|\uh\|^2_{\LtwoK}\lesssim h^2_K\lvert\uh \rvert_{H^1(K)}^2 +h_K \| \uh-\uhath\|^2_{L^2(\Kboundary)}+ \left\|\CRlifting(\uhataverage)\right\|_{\LtwoK}^2,
    \end{equation}
    and the global inequality will naturally hold as well:
    \begin{equation}
        \label{eq:L2_estimate_global_Th}\|\uh\|^2_{\Ltwo}\lesssim h^2_K\lvert\uh \rvert_{\Hone}^2 +h_K \| \uh-\uhath\|^2_{L^2(\partial\Th)}+ \left\|\CRlifting(\uhataverage)\right\|_{\Ltwo}^2.
    \end{equation}
\end{prop}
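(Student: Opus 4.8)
The plan is to prove the local estimate \eqref{eq:L2_estimate_local_K} and then recover the global estimate \eqref{eq:L2_estimate_global_Th} simply by summing over all $K\in\Th$, since the left-hand side, the broken seminorm, the skeleton norm, and the lifting norm all decompose as sums of their element-wise contributions. Thus all the work sits in the single-element bound. The guiding idea there is to measure $\uh$ against a \emph{face} average rather than the usual element average: the face average of $\uh$ is precisely the quantity that the CR lifting reproduces (as its value at the face center), which is what lets both the data $\uhath$ and the lifting $\CRlifting(\uhataverage)$ enter the estimate.

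Concretely, I would fix one face $e\in\Kboundary$ and write $(\uh)_e=\frac{1}{|e|}\int_e\uh\,ds$ for the face average, in the notation of Lemma \ref{lem:Poincare_Frederich_discrete_boundary_average}. First I apply the Poincaré estimate \eqref{eq:mean_boundary_optimal_Poincare_constant} with $f=\uh$ to get $\|\uh-(\uh)_e\|_{\LtwoK}^2\lesssim h_K^2\lvert\uh\rvert_{H^1(\Omega;K)}^2$, and then, since $(\uh)_e$ is constant so that $\|(\uh)_e\|_{\LtwoK}^2=|K|\,(\uh)_e^2$, the triangle inequality gives $\|\uh\|_{\LtwoK}^2\lesssim h_K^2\lvert\uh\rvert_{H^1(\Omega;K)}^2+|K|\,(\uh)_e^2$. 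The first term is already one of the three desired terms, so it remains to control $|K|\,(\uh)_e^2$.

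Next I split the face average as $(\uh)_e=\frac{1}{|e|}\int_e(\uh-\uhath)\,ds+\uhathaverageEdge$, where $\uhathaverageEdge$ is the face average of $\uhath$ from \eqref{eq:side_average_hat}. Cauchy--Schwarz yields $\bigl(\frac{1}{|e|}\int_e(\uh-\uhath)\,ds\bigr)^2\le\frac{1}{|e|}\|\uh-\uhath\|_{L^2(\Omega;e)}^2$, and since $|K|/|e|\propto h_K$ under shape regularity, the contribution of this piece is bounded by $h_K\|\uh-\uhath\|_{L^2(\Omega;\Kboundary)}^2$, the second desired term. For the remaining piece $|K|\,\uhathaverageEdge^2$, I use that $\uhathaverageEdge=\uhataverage|_e$ is exactly the value of the CR lifting at the center of $e$ (equivalently its face average), so that Lemma \ref{lem:CRLifting_estimate} bounds $|e|\,\uhathaverageEdge^2$ by $\|\CRlifting(\uhataverage)\|_{L^2(\Omega;\Kboundary)}^2$, and the discrete trace inequality of Remark \ref{rem:trace_inequality_hK} then converts this boundary norm of the linear lifting into $h_K^{-1}\|\CRlifting(\uhataverage)\|_{\LtwoK}^2$. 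Tracking powers, $|K|\,\uhathaverageEdge^2\lesssim\frac{|K|}{|e|\,h_K}\|\CRlifting(\uhataverage)\|_{\LtwoK}^2\lesssim\|\CRlifting(\uhataverage)\|_{\LtwoK}^2$, using $|K|\propto h_K^d$ and $|\Kboundary|\propto h_K^{d-1}$; combining the three bounds gives \eqref{eq:L2_estimate_local_K}.

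I expect the main obstacle to be precisely this last step: passing from the face-average value $\uhathaverageEdge$ (a boundary, essentially pointwise, quantity attached to $\uhath$) to the \emph{interior} $L^2$ norm of the lifting runs against the direction of the trace inequality, and the argument only closes because the powers of $|K|$, $|e|$, and $h_K$ collapse to a dimensionless constant. It is exactly here that Lemma \ref{lem:CRLifting_estimate} together with the scaling relations is indispensable, and this bookkeeping is also what produces the clean $O(h_K)$ weight on the interface term, in contrast to the $h_K^{1/(1-d)}$ weight that the raw jump term in Lemma \ref{lem:brenner_poincare} would force. Everything else---the single Poincaré step, the two applications of Cauchy--Schwarz, and the final summation over elements---is routine.
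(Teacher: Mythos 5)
Your proof is correct and takes essentially the same route as the paper: decompose $\uh$ into an average plus oscillation via the simplex Poincar\'e inequality (Lemma \ref{lem:Poincare_Frederich_discrete_boundary_average}), pass from $\uh$ to $\uhath$ on the face by Cauchy--Schwarz with the $|K|/|e|\propto h_K$ scaling, and control the face average of $\uhath$ through Lemma \ref{lem:CRLifting_estimate} (i.e.\ estimate \eqref{eq:uhat_average_lifting_comparison_estimate}) followed by the discrete trace inequality of Remark \ref{rem:trace_inequality_hK}, then sum over elements. The only (harmless) difference is that you anchor the argument at a single-face average $(\uh)_e$, where \eqref{eq:mean_boundary_optimal_Poincare_constant} applies verbatim, whereas the paper works with the whole-boundary average $\uhaverageBoundary$ and sums the contributions of all $d+1$ faces.
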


\begin{proof}
    We start by restricting our scope in an element $K$. In each $K$, we can split $\|\uh\|_{L^2(K)}$ into two parts as
    \begin{equation}\label{eq:uh_split}
        \|\uh\|_{L^2(K)}\lesssim\|\uh-\uhaverageBoundary\|_{L^2(K)}+\|\uhaverageBoundary\|_{L^2(K)}.
    \end{equation}
    where 
 \begin{equation}\label{eq:uhaverageBoundary_def}
\uhaverageBoundary:= \frac{1}{|\partial K|} \int_{\partial K} \uh \, ds
\end{equation}
is defined as the average of $\uh$ over $\partial K$. Using the Poincar{\'e} inequality in a single simplex (Lemma \ref{lem:Poincare_Frederich_discrete_boundary_average}), the first quantity on the right-hand side of \eqref{eq:uh_split} can be controlled as:
    \begin{align}
        \label{eq:difference_u_uaverage_estimate}
       \|\uh-\uhaverageBoundary\|_{L^2(K)}&\lesssim h_K\|\gradienth \uh\|_{L^2(K)}.
    \end{align}
    For the second quantity, since $u_{\Kboundary}$ is a constant, we have
    \begin{equation*}
        \|\uhaverageBoundary\|_{L^2(K)}^2=\lvert K \rvert \,\uhaverageBoundary^2.
    \end{equation*}
This leads us to the objective of estimating $\uhaverageBoundary$ to effectively bound $\|\uh\|_{L^2(K)}$. By revisiting its definition, we can reinterpret this as the cumulative sum of integrals of $\uh$ across the different faces $e_i$ of element $K$, yielding the subsequent formulation:

    \begin{align*}
        \uhaverageBoundary=\frac{1}{|\Kboundary|} \int_{\Kboundary} \uh \, ds&=\frac{1}{|\Kboundary|} \sum_{i=1}^{d+1}\int_{e_i} \uh \, ds\\
        &=\frac{1}{|\Kboundary|} \sum_{i=1}^{d+1}\int_{e_i} (\uh -\uhathaverageEdgei)\,ds +\frac{1}{|\Kboundary|} \sum_{i=1}^{d+1}\int_{e_i}\uhathaverageEdgei\, ds
    \end{align*}
where definition of  $\uhathaverageEdgei$ follows from \eqref{eq:side_average_hat}.   As $\int_{e_i}\uhathaverageEdgei\,ds=\int_{e_i}\uhath\,ds$, $ \uhaverageBoundary$ can be written as:
    \begin{align*}
         \uhaverageBoundary & =\frac{1}{|\Kboundary|} \int_{\Kboundary} (\uh-\uhath)\,ds  +\frac{1}{|\Kboundary|} \sum_{i=1}^{d+1}\int_{e_i}\uhathaverageEdgei\, ds.
    \end{align*}
Applying the Cauchy-Schwarz inequality along with the assumption of shape regularity enables us to perform the following calculation:

    \begin{equation}\label{eq:uhaverageBoundary_estimate_point}
    \begin{aligned}
        (\uhaverageBoundary)^2&\lesssim \frac{1}{|\partial K|^2}\left[\int_{\Kboundary} (\uh-\uhath)\,dx \right]^2+\frac{1}{|\partial K|^2}\left[ \sum_{i=1}^{d+1}\int_{e_i}\uhathaverageEdgei\, dx \right]^2\\
        &\leq \frac{1}{|\partial K|}\int_{\Kboundary} (\uh-\uhath)^2\,dx + \frac{1}{|\partial K|^2}\left( \sum_{i=1}^{d+1}|e_i|\,\uhathaverageEdgei \right)^2\\
        &\lesssim \frac{1}{|\partial K|}\| \uh-\uhath\|^2_{L^2(\Kboundary)}+ \frac{1}{|\partial K|^2} \left(\sum_{i=1}^{d+1}|e_i|^2\right)\,\left(\sum_{i=1}^{d+1}\,\uhathaverageEdgei^2\right)\\
        &\lesssim \frac{1}{|\partial K|}\| \uh-\uhath\|^2_{L^2(\Kboundary)}+\sum_{i=1}^{d+1} \,(\uhathaverageEdgei)^2.
    \end{aligned}
    \end{equation}
Hence, using the principle of shape regularity once more, we obtain:
    \begin{equation}
        \label{eq:u_boundary_average_estimate}
        \|\uhaverageBoundary\|^2_{L^2(K)}=\lvert K \rvert\, \uhaverageBoundary^2\lesssim h_K \| \uh-\uhath\|^2_{L^2(\Kboundary)}+ h_K\|\Bar{\hat{u}}_{h}\|^2_{L^2(\partial K)},
    \end{equation}
where $\uhataverage$ is defined in \eqref{eq:uhat_average_def}. Additionally, we have used the following fact to deduce \eqref{eq:u_boundary_average_estimate}
\begin{equation*}\label{eq:uhataverage_definition}
   \|\Bar{\hat{u}}_{h}\|^2_{L^2(\partial K)} = \sum_{i=1}^{d+1} \left|e_i\right|\,\uhathaverageEdgei^2
\end{equation*}
since $\uhathaverageEdgei$ is a constant for each $i$.

Then we use the lifting operator $\CRlifting$ to define a function $\CRlifting(\uhataverage)$ in $\CRspace$. According to Lemma \ref{lem:CRLifting_estimate}, we get that in each element $K$,
\begin{equation}\label{eq:uhat_lifting_estimate}
    \|\uhataverage\|^2_{L^2(\Kboundary)}\leq  \left\|\CRlifting(\uhataverage)\right\|_{L^2(\Kboundary)}^2.
\end{equation}
Now combining \eqref{eq:uhat_lifting_estimate} with Lemma \ref{lemma:trace_inequality_simplex} (discrete trace inequality in simplex), we can rewrite \eqref{eq:u_boundary_average_estimate} as
\begin{equation}\label{eq:uhaverageboundary_estimate}
  \begin{aligned}
    \|\uhaverageBoundary\|_{L^2(K)}^2&\lesssim h_K \| \uh-\uhath\|^2_{L^2(\Kboundary)}+ h_K\|\uhataverage\|^2_{L^2(\partial K)}\\
    &\leq h_K \| \uh-\uhath\|^2_{L^2(\Kboundary)}+ h_K\left\|\CRlifting(\uhataverage)\right\|_{L^2(\Kboundary)}^2\\
    &\lesssim h_K \| \uh-\uhath\|^2_{L^2(\Kboundary)}+ \left\|\CRlifting(\uhataverage)\right\|_{L^2(K)}^2.
\end{aligned}  
\end{equation}
By inserting the estimates from \eqref{eq:uhaverageboundary_estimate} and \eqref{eq:difference_u_uaverage_estimate} into \eqref{eq:uh_split}, we get \eqref{eq:L2_estimate_local_K}. And then summing the results over all elements $K$ in $\Th$, \eqref{eq:L2_estimate_global_Th} follows.

\end{proof}

In this analysis, we encounter the term $\left\|\CRlifting(\uhataverage)\right\|_{L^2(K)}^2$. This is where the Poincaré inequality for nonconforming spaces with no jump term, as outlined in Corollary \ref{cor:Poincare_no_jump}, becomes relevant because this is a CR element. Consequently, an integral involving $\CRlifting(\uhataverage)$ could emerge. To express this term in a form more consistent with the classical Poincaré inequality, we examine the difference between $\int_\Omega\CRlifting(\uhataverage)\,dx $ and $\int_\Omega\uh\,dx$, leading to the subsequent finding:

\begin{lem}
    \label{lem:average_difference_u_uhat}
    The difference between integral of $\CRlifting(\uhataverage)$ and $\uh$ can be controlled as
    \begin{equation}
         \left(\int_\Omega\CRlifting(\uhataverage)\,dx \right)^2\lesssim ({h})^2\lvert\uh \rvert_{\Hone}^2 +{h} \| \uh-\uhath\|^2_{L^2(\partial\Th)}+\left( \int_\Omega\uh\,dx\right)^2.
    \end{equation}
    
\end{lem}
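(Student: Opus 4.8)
The plan is to reduce the statement to an $L^2$ estimate of the single mesh function $\CRlifting(\uhataverage)-\uh$ and then to control that difference element by element. First I would split
\[
\int_\Omega \CRlifting(\uhataverage)\,dx = \int_\Omega\bigl(\CRlifting(\uhataverage)-\uh\bigr)\,dx + \int_\Omega \uh\,dx ,
\]
so that, after squaring and using $(a+b)^2\le 2a^2+2b^2$, the term $\left(\int_\Omega\uh\,dx\right)^2$ already appears on the right-hand side and it remains to bound $\left(\int_\Omega(\CRlifting(\uhataverage)-\uh)\,dx\right)^2$. A single Cauchy--Schwarz step over $\Omega$ gives
\[
\Bigl(\int_\Omega(\CRlifting(\uhataverage)-\uh)\,dx\Bigr)^2 \le |\Omega|\,\|\CRlifting(\uhataverage)-\uh\|_{\Ltwo}^2 ,
\]
and since $|\Omega|$ is a fixed constant it is absorbed by $\lesssim$. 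Thus the whole lemma follows once I establish the local bound $\|\CRlifting(\uhataverage)-\uh\|_{L^2(\Omega;K)}^2 \lesssim h_K^2|\uh|_{H^1(\Omega;K)}^2 + h_K\|\uh-\uhath\|_{L^2(\Omega;\Kboundary)}^2$ and sum it over $K\in\Th$.

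For the local bound I would insert the constant $\uhaverageBoundary$, the mean of $\uh$ over $\partial K$ used in the proof of Proposition \ref{prop:L2_estimate}, and apply the triangle inequality $\|\CRlifting(\uhataverage)-\uh\|_{L^2(\Omega;K)} \le \|\CRlifting(\uhataverage)-\uhaverageBoundary\|_{L^2(\Omega;K)} + \|\uhaverageBoundary-\uh\|_{L^2(\Omega;K)}$. The second summand is exactly \eqref{eq:difference_u_uaverage_estimate}, contributing $\lesssim h_K|\uh|_{H^1(\Omega;K)}$. For the first summand I would use that $g:=\CRlifting(\uhataverage)-\uhaverageBoundary$ is affine on $K$, so by the standard affine scaling equivalence between the $L^2$ norm of an affine function and its values at the face centroids $c_{e_i}$ (the volume factor scales exactly and nodal values are preserved, so the constant depends only on $d$), one gets $\|g\|_{L^2(\Omega;K)}^2 \lesssim |K|\sum_{i=1}^{d+1} g(c_{e_i})^2 = |K|\sum_{i=1}^{d+1}(\uhathaverageEdgei-\uhaverageBoundary)^2$, where I used $g(c_{e_i})=\CRlifting(\uhataverage)(c_{e_i})-\uhaverageBoundary=\uhathaverageEdgei-\uhaverageBoundary$ by the definition \eqref{eq:CR_lifting_def} of the lifting.

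It then remains to estimate each nodal discrepancy $\uhathaverageEdgei-\uhaverageBoundary$, which I would split through the face mean $\uhaverageEdgei:=\frac{1}{|e_i|}\int_{e_i}\uh\,ds$ of $\uh$. The piece $\uhathaverageEdgei-\uhaverageEdgei=\frac{1}{|e_i|}\int_{e_i}(\uhath-\uh)\,ds$ is controlled by Cauchy--Schwarz, so that $|K|\sum_i(\uhathaverageEdgei-\uhaverageEdgei)^2 \lesssim |K|\sum_i\frac{1}{|e_i|}\|\uh-\uhath\|_{L^2(\Omega;e_i)}^2 \lesssim h_K\|\uh-\uhath\|_{L^2(\Omega;\Kboundary)}^2$, using $|K|/|e_i|\lesssim h_K$ from shape regularity. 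The piece $\uhaverageEdgei-\uhaverageBoundary$ measures the spread of the face means of $\uh$; comparing each $\uhaverageEdgei$ to the element mean $\uhaverageK$ via \eqref{eq:mean_difference_optimal_Poincare_constant} of Lemma \ref{lem:Poincare_Frederich_discrete_boundary_average} (the integrand there being constant) yields $(\uhaverageEdgei-\uhaverageK)^2 \lesssim \frac{h_K^2}{|K|}|\uh|_{H^1(\Omega;K)}^2$, and since $\uhaverageBoundary$ is a convex combination of the $\uhaverageEdgei$ with weights $|e_i|/|\Kboundary|$, the same bound carries over to $\uhaverageEdgei-\uhaverageBoundary$; multiplying by $|K|$ and summing over the $d+1$ faces gives $\lesssim h_K^2|\uh|_{H^1(\Omega;K)}^2$. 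Collecting the three contributions produces the local bound and hence the lemma. I expect the main obstacle to be the first summand $\|\CRlifting(\uhataverage)-\uhaverageBoundary\|_{L^2(\Omega;K)}$: it requires the scaling norm-equivalence for affine functions in terms of face-centroid values and, more delicately, the passage from the single-face means $\uhaverageEdgei$ to the boundary mean $\uhaverageBoundary$, which is precisely where Lemma \ref{lem:Poincare_Frederich_discrete_boundary_average} and shape regularity are genuinely needed to recover the correct powers of $h_K$.
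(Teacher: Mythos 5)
Your proof is correct, but it takes a genuinely different route from the paper. The paper never estimates the $L^2$ norm of $\CRlifting(\uhataverage)-\uh$: it bounds only the integral of the difference, writing $\int_\Omega(\CRlifting(\uhataverage)-\uh)\,dx=\sum_{K\in\Th}|K|\,(\omegahaverageK-\uhaverageK)$ and invoking the exact algebraic identity $\omegahaverageK=\frac{1}{d+1}\sum_{i=1}^{d+1}\uhathaverageEdgei$ (the mean of an affine function over a simplex is its barycenter value, and the barycenter is the average of the $d+1$ face centroids), after which it splits into the same two pieces you use: $\uhathaverageEdgei-\uhaverageEdgei$, handled by Cauchy--Schwarz and shape regularity, and $\uhaverageEdgei-\uhaverageK$, handled by \eqref{eq:mean_difference_optimal_Poincare_constant} of Lemma \ref{lem:Poincare_Frederich_discrete_boundary_average}. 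You instead establish the strictly stronger elementwise bound $\|\CRlifting(\uhataverage)-\uh\|^2_{L^2(\Omega;K)}\lesssim h_K^2\lvert\uh\rvert^2_{H^1(\Omega;K)}+h_K\|\uh-\uhath\|^2_{L^2(\Omega;\Kboundary)}$ and then apply Cauchy--Schwarz once over $\Omega$. The price is the extra machinery your route requires, all of which checks out: the norm equivalence $\|g\|^2_{L^2(\Omega;K)}\lesssim |K|\sum_i g(c_{e_i})^2$ for affine $g$ is valid because the face centroids are unisolvent degrees of freedom for $\Pone(K)$ (this is the CR element) and affine pullback to the reference simplex preserves centroid values while the volume factor scales exactly, so the constant is purely dimensional; the passage from $\uhaverageEdgei$ to $\uhaverageBoundary$ via the convex combination with weights $|e_i|/|\Kboundary|$ is sound, as is $|K|/|e_i|\lesssim h_K$ (indeed $|K|=c_d h_{e_i}|e_i|$ with $h_{e_i}\le h_K$). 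The payoff of your detour is a reusable local $L^2$ comparison between the CR lifting and $\uh$ — it essentially re-derives the content of Proposition \ref{prop:L2_estimate} along the way and could replace $\|\CRlifting(\uhataverage)\|_{\Ltwo}$-type bounds elsewhere — whereas the paper's computation is leaner: by working only with integrals, the identity $\int_K(\CRlifting(\uhataverage)-\uh)\,dx=|K|(\omegahaverageK-\uhaverageK)$ makes the volume factors exact and avoids any finite-dimensional norm-equivalence constant.
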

\begin{proof}
For simplicity of notation, we use $\omega_h$ to denote the piecewise linear function $ \CRlifting(\uhataverage)$ in the sense that $\omega_h|_K = \CRlifting(\uhataverage)|_K$.
    Then 
    \begin{align*}
        \int_\Omega \CRlifting(\uhataverage)\,dx-\int_\Omega \uh\,dx&=\int_\Omega\omega_h\,dx-\int_\Omega \uh\,dx\\
        &=\sum_{K\in\Th} \int_K \left(\omega_h-\uh\right)\,dx = \sum_{K\in\Th} \int_K\left(\omegahaverageK-\uhaverageK\right)\,dx
    \end{align*}
where $\omegahaverageK$ and $\uhaverageK$ denotes their averages over the simplex $K$. Since $\omega_h$ is a linear function, $\omegahaverageK$ can be evaluated via its value on the vertices and so we can compute that
\begin{equation}
    \omegahaverageK = \frac{1}{d+1}\sum_{i=1}^{d+1} \uhathaverageEdgei.
\end{equation}
With this, we have
\begin{align*}
    \int_\Omega \CRlifting(\uhataverage)\,dx-\int_\Omega \uh\,dx&=\frac{1}{d+1}\sum_{K\in\Th} \sum_{i=1}^{d+1}\int_K\left(\uhathaverageEdgei-\uhaverageK\right)\,dx\\
    &=\frac{1}{d+1}\sum_{K\in\Th} \sum_{i=1}^{d+1}\left[\int_K\left(\uhathaverageEdgei-\uhaverageEdgei\right)\,dx+\int_K\left(\uhaverageEdgei-\uhaverageK\right)\,dx\right]\\
    &={\frac{1}{d+1}\sum_{K\in\Th} \sum_{i=1}^{d+1} \frac{|K|}{|e_i|}\int_{e_i} \left(\uhath-\uh\right)\,ds +\frac{1}{d+1}\sum_{K\in\Th} \sum_{i=1}^{d+1}\int_K\left(\uhaverageEdgei-\uhaverageK\right)\,dx}\\
    &:=I_1 + I_2
\end{align*}
where $\uhaverageEdgei$, following the definition introduced in Lemma \ref{lem:Poincare_Frederich_discrete_boundary_average}, is defined as average of $\uh|_K$ on the side $e_i$ with the simplex $K$,
\begin{equation*}
    \uhaverageEdgei = \frac{1}{|e_i|}\int_{ e_i } \uh|_K\,ds.
\end{equation*}
Therefore, we can control $I_1$, using the shape-regularity assumption and Cauchy-Schwarz inequality, as
\begin{align*}
(I_1)^2={\frac{1}{(d+1)^2}\left[\sum_{K\in\Th} \sum_{i=1}^{d+1} \frac{|K|}{|e_i|}\int_{e_i} \left(\uhath-\uh\right)\,ds\right]^2}
    &\lesssim { \left[\sum_{K\in\Th} \frac{|K|}{|\Kboundary|}\int_{\partial_K} \left|\uhath-\uh\right|\,ds\right]^2}\\
    &\lesssim {\left(\sum_{K\in\Th}|K|\right)\,\left[\sum_{K\in \Th} \frac{|K|}{|\partial K|^2}\left(\int_{\partial K} |\uhath-\uh|\,ds\right)^2 \right]}  \\
    &\lesssim {\sum_{K\in\Th} \frac{|K|}{|\Kboundary|}\int_{\partial_K} \left|\uhath-\uh\right|^2\,ds }\\
    &\lesssim {h} \| \uh-\uhath\|^2_{L^2(\partial\Th)}
\end{align*}
{where $h=\max_K h_K$. The last inequality holds as Assumption A1 implies $\frac{|K|}{|\partial K|}$ is bounded above.} On the other hand, $I_2$ can be controlled using \eqref{eq:mean_difference_optimal_Poincare_constant} from Lemma \ref{lem:Poincare_Frederich_discrete_boundary_average} as 
\begin{align*}
    (I_2)^2=\frac{1}{(d+1)^2}\left[\sum_{K\in\Th} \sum_{i=1}^{d+1}\int_K\left(\uhaverageEdgei-\uhaverageK\right)\,dx\right]^2& 
 \lesssim {\left[\sum_{K\in\Th}\int_K \left(\uhaverageEdgei-\uhaverageK\right)\,dx\right]^2}\\
 &\lesssim {\left(\sum_{K\in\Th} |K|\right)\,\left[\sum_{K\in\Th}\frac{1}{|K|}\left(\int_K\left|\uhaverageEdgei-\uhaverageK\right|\,dx \right)^2\right]}\\
    &\lesssim \sum_{K\in\Th} \int_K \left|\uhaverageEdgei-\uhaverageK\right|^2\,dx\\
    &\lesssim  {\sum_{K\in\Th}(h_K)^2\|\gradienth\uh\|_{L^2(K)}^2}\\&{\lesssim  h^2\lvert  \uh\rvert_{\Hone}^2}.
\end{align*}
Combining the estimate for $I_1$ and $I_2$, we get
\begin{align*}
    \left(\int_\Omega\CRlifting(\uhataverage)\,dx \right)^2&\lesssim \left(\int_\Omega \CRlifting(\uhataverage)\,dx-\int_\Omega \uh\,dx\right)^2 + \left( \int_\Omega\uh\,dx\right)^2\\
    &\lesssim {h}^2\lvert\uh \rvert_{\Hone}^2 +{h} \| \uh-\uhath\|^2_{L^2(\partial\Th)}+\left( \int_\Omega\uh\,dx\right)^2.
\end{align*}
Here we finish the proof.
    
\end{proof}

Now we state the main result of this section.

\begin{thm}\label{thm:Poincare}
    Let $(\uh,\uhath)\in\Xhk$. Then the following Poincar\'e inequalities hold:
    \begin{equation}\label{eq:poincare_mean1}
          \|\uh\|^2_{\Ltwo}\lesssim {h}^2\lvert\uh \rvert_{\Hone}^2 +{h} \| \uh-\uhath\|^2_{L^2(\partial\Th)}+ \left| \CRlifting(\uhataverage) \right|_{\Hone}^2 + \left(\int_\Omega\CRlifting(\uhataverage)\,dx \right)^2
    \end{equation}
    and
    \begin{equation}\label{eq:poincare_boundary_mean}
          \|{\uh}\|^2_{\Ltwo}\lesssim {h}^2\lvert\uh \rvert_{\Hone}^2 +{h} \| \uh-\uhath\|^2_{L^2(\partial\Th)}+ \left| \CRlifting(\uhataverage) \right|_{\Hone}^2 + \left(\int_\Gamma\uhath\,ds \right)^2
    \end{equation}
    {where $\Gamma$ is combination of boundary faces that has a positive measure}, namely, $\Gamma=\bigcup_{i=1}^N e_i$ such that $e_i\in\pThb$ and $\{e_1,e_2,\cdots,e_N\}$ are different faces. In addition, the following variant of \eqref{eq:poincare_mean1} expressing in term of integral of $\uh$ also holds:
    \begin{equation}\label{eq:poincare_mean2}
          \|\uh\|^2_{\Ltwo}\lesssim {h}^2\lvert\uh \rvert_{\Hone}^2 +{h} \| \uh-\uhath\|^2_{L^2(\partial\Th)}+ \left| \CRlifting(\uhataverage) \right|_{\Hone}^2 + \left(\int_\Omega\uh\,dx \right)^2.
    \end{equation}
\end{thm}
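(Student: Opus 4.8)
The plan is to treat this theorem as an assembly of the machinery already in place rather than as a fresh computation. The starting point is the global estimate \eqref{eq:L2_estimate_global_Th} from Proposition \ref{prop:L2_estimate}, which already reduces $\|\uh\|^2_{\Ltwo}$ to the three gradient/jump terms appearing on the right of \eqref{eq:poincare_mean1} plus the single remaining quantity $\left\|\CRlifting(\uhataverage)\right\|_{\Ltwo}^2$. Thus the whole theorem comes down to controlling this last term, and the key structural observation is that $\CRlifting(\uhataverage)$ is, by construction, a genuine element of $\CRspace$. In particular it satisfies the zero-jump condition, so the jump-free Poincaré and Friedrichs inequalities of Corollary \ref{cor:Poincare_no_jump} apply to it directly.

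For \eqref{eq:poincare_mean1} I would apply the mean-value inequality \eqref{eq:discrete_poincare_no_jump} of Corollary \ref{cor:Poincare_no_jump} to $f=\CRlifting(\uhataverage)$, obtaining $\left\|\CRlifting(\uhataverage)\right\|_{\Ltwo}^2\lesssim \left|\CRlifting(\uhataverage)\right|_{\Hone}^2+\left(\int_\Omega\CRlifting(\uhataverage)\,dx\right)^2$, and then substitute this into \eqref{eq:L2_estimate_global_Th}. This yields \eqref{eq:poincare_mean1} verbatim, with no further work.

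For the boundary-mean version \eqref{eq:poincare_boundary_mean} I would instead use the Friedrichs inequality \eqref{eq:discrete_friedrichs_no_jump} applied to $\CRlifting(\uhataverage)$, which produces the boundary term $\left(\int_\Gamma\CRlifting(\uhataverage)\,ds\right)^2$. The one identity that genuinely must be verified — and the step I would flag as the main (though modest) obstacle — is that this boundary integral of the lifting coincides with that of $\uhath$. Here the defining property \eqref{eq:CR_lifting_def} of the lifting, together with the definitions \eqref{eq:side_average_hat} and \eqref{eq:uhat_average_def}, gives on each boundary face $e$ the exact equality $\int_e\CRlifting(\uhataverage)\,ds=|e|\,\uhathaverageEdge=\int_e\uhath\,ds$; summing over the faces that make up $\Gamma$ then yields $\int_\Gamma\CRlifting(\uhataverage)\,ds=\int_\Gamma\uhath\,ds$. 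Replacing the lifting's boundary integral accordingly and substituting into \eqref{eq:L2_estimate_global_Th} gives \eqref{eq:poincare_boundary_mean}. This is really bookkeeping rather than a true difficulty, but it is the only place where the concrete construction of $\CRlifting$ is essential.

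Finally, for the variant \eqref{eq:poincare_mean2}, I would start from the already-established \eqref{eq:poincare_mean1} and invoke Lemma \ref{lem:average_difference_u_uhat} to bound $\left(\int_\Omega\CRlifting(\uhataverage)\,dx\right)^2$ by $(h_K)^2\lvert\uh\rvert_{\Hone}^2+h_K\|\uh-\uhath\|^2_{L^2(\Omega;\partial\Th)}+\left(\int_\Omega\uh\,dx\right)^2$. Since the first two of these terms already appear on the right-hand side of \eqref{eq:poincare_mean1}, they are simply absorbed into the existing constants, leaving \eqref{eq:poincare_mean2}. Overall the proof is short precisely because the difficult analytic content was front-loaded into Proposition \ref{prop:L2_estimate} and Lemma \ref{lem:average_difference_u_uhat}; the theorem itself is their synthesis with Corollary \ref{cor:Poincare_no_jump}.
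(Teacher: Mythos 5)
Your proposal is correct and follows essentially the same route as the paper's own proof: Proposition \ref{prop:L2_estimate} combined with Corollary \ref{cor:Poincare_no_jump} applied to $\CRlifting(\uhataverage)$, the face-wise identity $\int_e\CRlifting(\uhataverage)\,ds=\int_e\uhath\,ds$ (valid because the average of the linear lifting over a face equals its value at the face center) for the boundary-mean version, and Lemma \ref{lem:average_difference_u_uhat} to pass to \eqref{eq:poincare_mean2}. No gaps.
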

\begin{proof}
    As $\CRlifting(\uhataverage)\in \CRspace$, estimate \eqref{eq:discrete_poincare_no_jump} and \eqref{eq:discrete_friedrichs_no_jump} in Corollary \ref{cor:Poincare_no_jump} hold for $\CRlifting(\uhataverage)$. Combining \eqref{eq:discrete_poincare_no_jump} with Proposition \ref{prop:L2_estimate}, \eqref{eq:poincare_mean1} will be immediately obtained. Similarly, combining \eqref{eq:discrete_friedrichs_no_jump} with Proposition \ref{prop:L2_estimate} will lead to
    \begin{equation*}
        \|\uh\|^2_{\Ltwo}\lesssim {h}^2\lvert\uh \rvert_{\Hone}^2 +{h} \| \uh-\uhath\|^2_{L^2(\partial\Th)}+ \left| \CRlifting(\uhataverage) \right|_{\Hone}^2 + \left(\int_\Gamma\CRlifting(\uhataverage)\,ds \right)^2.
    \end{equation*}
    Since $\Gamma$ is a combination of boundary faces, we have
    \begin{align*}
        \int_\Gamma\CRlifting(\uhataverage)\,ds & =\int_{\bigcup_{i=1}^N e_i}\CRlifting(\uhataverage)\,ds\\
        &=\sum_{i=1}^N \int_{e_i} \CRlifting(\uhataverage)\,ds=\sum_{i=1}^N \int_{e_i}\uhataverage\,ds=\sum_{i=1}^N \int_{e_i}\uhath\,ds=\int_\Gamma\uhath\,ds 
    \end{align*}
    and so \eqref{eq:poincare_boundary_mean} is obtained. 

    To the end, \eqref{eq:poincare_mean1} together with Lemma \ref{lem:average_difference_u_uhat} will immediately lead to \eqref{eq:poincare_mean2}.
\end{proof}

\section{Discrete Trace Inequality}\label{sec:trace}

In this section, we present an analogue of the trace theorem specifically designed for hybridizable spaces, which is an essential tool for analyzing boundary value problems. We want to highlight the difference between the notations $H^1(\Omega)$ and $L^2(\Omega)$, which indicate the standard Sobolev space and the square integrable space, respectively, and the specialized notations $\Hone$ and $\Ltwo$. Our discussion commences with a finding from \cite{brenner2015piecewise} that provides insight into $\Hone$, laying the groundwork for understanding this space through its relationship with functions belonging to $H^1(\Omega)$.

\begin{lem}
\label{lem:characterization_Hone}
    Let $f\in \Hone$. Then there exists a function $\zeta\in H^1(\Omega)$ such that
    \begin{equation}
        \label{eq:characterization_Hone}
       \begin{aligned}
           \|\nabla\zeta\|^2_{L^2(\Omega)}+ \frac{1}{{h}}&\|f-\zeta\|_{L^2(\partial\Omega)}^2+\frac{1}{{h}^2}\|f-\zeta\|^2_{L^2(\Omega)}\\
           &\lesssim \lvert f \rvert_{\Hone}^2 + \sum_{e\in\pThi} \frac{1}{|e|}\left\|\Pi_{0,e}[[f]]_e \right\|^2_{L^2(e)}
       \end{aligned}
    \end{equation}
    where $\Pi_{0,e}$ is the orthogonal projection operator from $L^2(e)$ onto $\Pzero(e)$, the space of constant functions on $e$.
\end{lem}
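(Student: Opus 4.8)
The plan is to produce the conforming companion $\zeta\in H^1(\Omega)$ by an explicit smoothing construction and then to bound each of the three quantities on the left of \eqref{eq:characterization_Hone} by the errors committed at each stage of that construction. The organizing constraint is that only the zeroth moment $\Pi_{0,e}[[f]]_e$ of the jump is permitted on the right-hand side, so every piece of data fed into $\zeta$ must depend on $f$ only through its face averages; this is precisely the information recorded by the Crouzeix--Raviart interpolant, which is why I would route the construction through $\CRspace$.

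First I would introduce the Crouzeix--Raviart interpolant $I_{\mathcal{CR}}\colon\Hone\to\CRspace$, prescribing its face-center degrees of freedom by the face means $(I_{\mathcal{CR}}f)|_K(c_e)=\frac{1}{|e|}\int_e f|_K\,ds$. These averages exist for piecewise $H^1$ functions through the elementwise trace operator, so no point values of $f$ are required. Two elementary facts are then recorded per element: the face Poincar\'e estimate \eqref{eq:mean_boundary_optimal_Poincare_constant} of Lemma \ref{lem:Poincare_Frederich_discrete_boundary_average} together with the discrete trace inequality (Lemma \ref{lemma:trace_inequality_simplex}) yields the approximation bounds $\|f-I_{\mathcal{CR}}f\|_{L^2(\Omega;K)}\lesssim h_K|f|_{H^1(\Omega;K)}$ and $|I_{\mathcal{CR}}f|_{H^1(\Omega;K)}\lesssim|f|_{H^1(\Omega;K)}$. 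Next I would apply an enriching (averaging) operator $E_h\colon\CRspace\to H^1(\Omega)$ into a conforming Lagrange space --- defined by assigning to each conforming degree of freedom the average of the values contributed by the adjacent elements --- and set $\zeta:=E_h(I_{\mathcal{CR}}f)$. The defining property of such an operator is that its error is supported near the faces and controlled by the jumps of its argument, giving, after shape regularity and an inverse estimate, bounds of the schematic form $\|v-E_hv\|^2_{L^2(\Omega;K)}\lesssim h_K\sum_{e\subset\partial K}\|[[v]]_e\|^2_{L^2(\Omega;e)}$ and $|v-E_hv|^2_{H^1(\Omega;K)}\lesssim h_K^{-1}\sum_{e\subset\partial K}\|[[v]]_e\|^2_{L^2(\Omega;e)}$ for $v\in\CRspace$.

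The heart of the argument is the treatment of $[[I_{\mathcal{CR}}f]]_e$. A direct computation shows that $I_{\mathcal{CR}}$ preserves the mean jump, $\int_e[[I_{\mathcal{CR}}f]]_e\,ds=\int_e[[f]]_e\,ds$, so I would split $[[I_{\mathcal{CR}}f]]_e=\Pi_{0,e}[[f]]_e+(I-\Pi_{0,e})[[I_{\mathcal{CR}}f]]_e$. The first summand reproduces exactly the jump term appearing on the right of \eqref{eq:characterization_Hone}; the second is a zero-mean face oscillation that I would absorb into $|f|_{\Hone}$ by means of the face-deviation estimate $\|f|_K-\bar f_e\|^2_{L^2(\Omega;e)}\lesssim h_K|f|^2_{H^1(\Omega;K)}$ (a trace--Poincar\'e bound consistent with the scalings $|e|\sim h_K^{d-1}$, $|K|\sim h_K^d$). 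Feeding this decomposition into the enriching bounds and tracking powers of $h_K$ via shape regularity then produces the gradient term from $\|\nabla\zeta\|_{L^2(\Omega)}\le|I_{\mathcal{CR}}f|_{\Hone}+|I_{\mathcal{CR}}f-\zeta|_{\Hone}$, the interior term from the triangle inequality $\|f-\zeta\|_{L^2(\Omega;K)}\le\|f-I_{\mathcal{CR}}f\|_{L^2(\Omega;K)}+\|I_{\mathcal{CR}}f-\zeta\|_{L^2(\Omega;K)}$ weighted by $h_K^{-2}$, and the boundary term after one further application of the discrete trace inequality to pass from $L^2(\Omega;K)$ to $L^2$ on $\partial\Omega\cap\partial K$, which explains the weaker weight $h_K^{-1}$. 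Summation over $K\in\Th$ gives \eqref{eq:characterization_Hone}.

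The step I expect to be the main obstacle is exactly this jump splitting: showing that the enriching error is governed by the \emph{projected} jump $\Pi_{0,e}[[f]]_e$ alone, with the non-mean part genuinely controlled by the broken gradient rather than re-entering the right-hand side. This forces the construction to use only face-mean data and requires the scale-invariant face-deviation and trace estimates to hold uniformly under the shape-regularity assumption; making the constants in the enriching operator independent of $h$ for general (non-polynomial) piecewise $H^1$ functions is the delicate point, and is where I would lean on the construction in \cite{brenner2015piecewise}.
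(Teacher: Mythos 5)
Your construction---face-average (Crouzeix--Raviart-type) interpolation followed by a nodal-averaging enriching operator, with the jump of the interpolant split into its face mean $\Pi_{0,e}[[f]]_e$ and a zero-mean oscillation absorbed into the broken gradient---is essentially the argument behind the paper's proof, which consists of a citation to \cite{brenner2015piecewise} (Proposition 2.7), where this same interpolation-plus-enriching scheme is carried out. Two small repairs are needed but leave the proof intact: your interpolant lands in the broken $\Pone$ space rather than $\CRspace$ whenever $\int_e [[f]]_e \neq 0$ (so you cannot literally write $I_{\mathcal{CR}}\colon\Hone\to\CRspace$, though your subsequent handling of $[[I_{\mathcal{CR}}f]]_e$ shows you treat it correctly), and in the boundary term the non-polynomial part $f-I_{\mathcal{CR}}f$ must be handled with the scaled trace inequality for $H^1$ functions, $\|v\|^2_{L^2(\Omega;\Kboundary)}\lesssim h_K^{-1}\|v\|^2_{\LtwoK}+h_K\lvert v\rvert^2_{H^1(\Omega;K)}$, rather than Lemma \ref{lemma:trace_inequality_simplex}, which applies only to polynomials.
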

\begin{proof}
    See \cite[Proposition 2.7]{brenner2015piecewise}.
\end{proof}

We observe that the jump term appears again in the inequality, similar to the discrete Poincaré inequality for classical non-conforming elements discussed in Section \ref{subsec:discrete_poincare_brenner}. This observation leads us to concentrate on functions from the CR space $\CRspace$, where the jump term is eliminated. Consequently, we obtain the following result:

\begin{lem}
    \label{lem:CR_trace_inequality}
    Let $\omega_h\in\CRspace$, then the following estimates hold:
    \begin{equation}
        \label{eq:CR_trace_estimate1}
         \|\omega_h\|^2_{\Ltwoboundaryb} \lesssim 1+{h}^2 \lvert \omega_h \rvert_{\Hone}^2 + \left(\int_{\Omega} \omega_h\,dx\right)^2
    \end{equation}
    and
    \begin{equation}
        \label{eq:CR_trace_estimate2}
         \|\omega_h\|^2_{\Ltwoboundaryb} \lesssim  (1+{h}) \lvert \omega_h \rvert_{\Hone}^2 + \left(\int_{\Gamma} \omega_h\,ds\right)^2,
    \end{equation}
{where $\Gamma$ is combination of boundary faces that has a positive measure}, namely, $\Gamma=\bigcup_{i=1}^N e_i$ such that $e_i\in\pThb$ and $\{e_1,e_2,\cdots,e_N\}$ are different boundary faces.
\end{lem}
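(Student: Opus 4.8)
The plan is to transfer the estimate from the nonconforming function $\omega_h$ to an $H^1(\Omega)$-conforming companion $\zeta$ produced by Lemma \ref{lem:characterization_Hone}, and then to invoke the classical trace inequalities \eqref{eq:classical_trace1}--\eqref{eq:classical_trace2} on $\zeta$. The crucial first observation is that since $\omega_h \in \CRspace$ satisfies $\int_e [[\omega_h]]_e \, ds = 0$ on every $e \in \pThi$, the constant projection $\Pi_{0,e}[[\omega_h]]_e$ vanishes on each interior face, so the entire jump sum on the right-hand side of \eqref{eq:characterization_Hone} is zero. Lemma \ref{lem:characterization_Hone} then supplies a $\zeta \in H^1(\Omega)$ with
\begin{equation*}
\|\nabla\zeta\|^2_{L^2(\Omega)} + \frac{1}{h_K}\|\omega_h - \zeta\|^2_{L^2(\partial\Omega)} + \frac{1}{(h_K)^2}\|\omega_h - \zeta\|^2_{L^2(\Omega)} \lesssim |\omega_h|^2_{\Hone}.
\end{equation*}
Observing that $\|\omega_h\|_{\Ltwoboundaryb} = \|\omega_h\|_{L^2(\partial\Omega)}$ (each boundary face belongs to a single element, so there is no double counting), a triangle inequality reduces the target to controlling $\|\omega_h - \zeta\|_{L^2(\partial\Omega)}$ and $\|\zeta\|_{L^2(\partial\Omega)}$ separately.

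The first term is handled directly by the boundary contribution in the displayed estimate, which yields $\|\omega_h - \zeta\|^2_{L^2(\partial\Omega)} \lesssim h_K\, |\omega_h|^2_{\Hone}$. For the second term I apply the classical trace inequality to $\zeta \in H^1(\Omega)$. To prove \eqref{eq:CR_trace_estimate1} I use \eqref{eq:classical_trace1}, which bounds $\|\zeta\|^2_{L^2(\partial\Omega)}$ by $\|\nabla\zeta\|^2_{L^2(\Omega)} + (\int_\Omega \zeta\,dx)^2$; the gradient term is $\lesssim |\omega_h|^2_{\Hone}$, and writing $\int_\Omega \zeta\,dx = \int_\Omega \omega_h\,dx + \int_\Omega(\zeta - \omega_h)\,dx$ with Cauchy--Schwarz on the last integral gives $(\int_\Omega \zeta\,dx)^2 \lesssim (\int_\Omega \omega_h\,dx)^2 + \|\zeta - \omega_h\|^2_{L^2(\Omega)} \lesssim (\int_\Omega \omega_h\,dx)^2 + (h_K)^2\,|\omega_h|^2_{\Hone}$. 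Collecting the three contributions, the coefficient of $|\omega_h|^2_{\Hone}$ is $h_K + 1 + (h_K)^2$, and since $h_K \leq \tfrac12\bigl(1 + (h_K)^2\bigr)$ by the arithmetic--geometric mean inequality, this is $\lesssim 1 + (h_K)^2$, establishing \eqref{eq:CR_trace_estimate1}. For \eqref{eq:CR_trace_estimate2} I instead use \eqref{eq:classical_trace2} with the mean over $\Gamma$, write $\int_\Gamma \zeta\,ds = \int_\Gamma \omega_h\,ds + \int_\Gamma(\zeta - \omega_h)\,ds$, and bound the last integral by $\|\zeta - \omega_h\|^2_{L^2(\Gamma)} \leq \|\zeta - \omega_h\|^2_{L^2(\partial\Omega)} \lesssim h_K\,|\omega_h|^2_{\Hone}$; the coefficient of $|\omega_h|^2_{\Hone}$ is then $h_K + 1 + h_K \lesssim 1 + h_K$, giving \eqref{eq:CR_trace_estimate2}.

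The main obstacle is not a single hard step but the careful bookkeeping of the powers of $h_K$ that distinguishes the two estimates. The essential point is that the interior difference $\|\zeta - \omega_h\|_{L^2(\Omega)}$ is controlled at order $O(h_K)$ (its square at order $O((h_K)^2)$), whereas the boundary difference $\|\zeta - \omega_h\|_{L^2(\partial\Omega)}$ is only controlled at order $O(h_K^{1/2})$ (its square at order $O(h_K)$); this asymmetry is precisely what produces the $1 + (h_K)^2$ factor in \eqref{eq:CR_trace_estimate1} versus the $1 + h_K$ factor in \eqref{eq:CR_trace_estimate2}, depending on whether the mean constraint is placed over $\Omega$ or over $\Gamma \subseteq \partial\Omega$. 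One must also be careful to verify that the Crouzeix--Raviart jump condition exactly annihilates the jump term in Lemma \ref{lem:characterization_Hone}; without this reduction the companion $\zeta$ would carry an uncontrolled jump contribution and the argument would not close with the stated constants.
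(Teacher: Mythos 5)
Your proposal is correct and follows essentially the same route as the paper's own proof: both invoke Lemma \ref{lem:characterization_Hone} with the Crouzeix--Raviart mean-jump condition annihilating the jump sum, split $\omega_h$ into $(\omega_h-\zeta)+\zeta$, apply the classical trace--Poincar\'e inequalities \eqref{eq:classical_trace1}--\eqref{eq:classical_trace2} to the conforming companion $\zeta$, and shift the mean from $\zeta$ to $\omega_h$ via Cauchy--Schwarz, arriving at the same $1+h_K+(h_K)^2\lesssim 1+(h_K)^2$ versus $1+h_K$ bookkeeping. Your explicit tracking of the $O((h_K)^2)$ interior versus $O(h_K)$ boundary control of $\|\omega_h-\zeta\|$ matches the asymmetry implicit in the paper's two displayed chains, so nothing is missing.
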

\begin{proof}
   For a given $\omega_h\in\CRspace$ and based on Lemma \ref{lem:characterization_Hone}, there exists a function $\zeta\in H^1(\Omega)$ satisfying the relationship described in \eqref{eq:characterization_Hone},

\begin{equation}
\label{eq:CR_Hone_characterization}
    \begin{aligned}
        \|\nabla\zeta\|^2_{L^2(\Omega)}+ \frac{1}{{h}}&\|\omega_h-\zeta\|_{L^2(\partial\Omega)}^2+\frac{1}{{h}^2}\|\omega_h-\zeta\|^2_{L^2(\Omega)}\lesssim \lvert \omega_h \rvert_{\Hone}^2 
    \end{aligned}
\end{equation}
as every interior face satisfies,
\begin{equation*}
    \int_e [[\omega_h]]_e\,ds=0.
\end{equation*}
Next, by decomposing $\omega_h$ into $(\omega_h-\zeta)$ and $\zeta$, we can derive an estimate for the trace of $\omega_h$ on $\pThb$ as follows:
\begin{equation}
\label{eq:trace_split}
    \begin{aligned}
        \|\omega_h\|^2_{\Ltwoboundaryb}&\lesssim \|\omega_h-\zeta\|^2_{\Ltwoboundaryb}+\|\zeta\|^2_{\Ltwoboundaryb}.
    \end{aligned}
\end{equation}
   The first part has already appeared in \eqref{eq:CR_Hone_characterization}. Regarding the second portion, given that $\zeta\in H^1(\Omega)$, the classical trace theorem and Poincaré inequality related to the mean-value are applicable, resulting in:
\begin{equation*}
    \begin{aligned}
        \|\zeta\|_{\Ltwoboundaryb}^2&\lesssim \|\nabla\zeta\|_{L^2(\Omega)}^2+\left(\int_{\Omega} \zeta\,dx\right)^2\\
        &\lesssim \|\nabla\zeta\|_{L^2(\Omega)}^2+\left[\int_{\Omega} (\zeta-\omega_h)\,dx\right]^2+\left(\int_{\Omega} \omega_h\,dx\right)^2\\
        &\lesssim \|\nabla\zeta\|_{L^2(\Omega)}^2+ \|\omega_h-\zeta\|^2_{\Ltwo}  +\left(\int_{\Omega} \omega_h\,dx\right)^2.
    \end{aligned}
\end{equation*}
The last line is deduced through employing the Cauchy-Schwarz inequality and acknowledging $\Omega$'s finite measure. Analogously, applying a Poincaré inequality relative to the boundary mean value provides:
\begin{equation*}
    \begin{aligned}
        \|\zeta\|_{\Ltwoboundaryb}^2&\lesssim \|\nabla\zeta\|_{L^2(\Omega)}^2+\left(\int_{\Gamma} \zeta\,ds\right)^2\\
        &\lesssim \|\nabla\zeta\|_{L^2(\Omega)}^2+\left[\int_{\partial\Omega} (\zeta-\omega_h)\,ds\right]^2+\left(\int_{\Gamma} \omega_h\,ds\right)^2\\
        &\lesssim \|\nabla\zeta\|_{L^2(\Omega)}^2+ \|\omega_h-\zeta\|^2_{L^2(\pThb)}  +\left(\int_{\Gamma} \omega_h\,ds\right)^2,
    \end{aligned}
\end{equation*}
due to $\Gamma$'s finite measure. Integrating these findings into \eqref{eq:trace_split} and associating it with \eqref{eq:CR_Hone_characterization} leads to:
\begin{align*}
    \|\omega_h\|^2_{\Ltwoboundaryb} &\lesssim \|\nabla\zeta\|_{L^2(\Omega)}^2+ \|\omega_h-\zeta\|^2_{L^2(\pThb)}  + \|\omega_h-\zeta\|^2_{\Ltwo}  +\left(\int_{\Omega} \omega_h\,dx\right)^2\\
    &\lesssim\left[1+{h}+{h}^2\right]\lvert \omega_h \rvert_{\Hone}^2 + \left(\int_{\Omega} \omega_h\,dx\right)^2\\
    &\lesssim\left[1+{h}^2\right]\lvert \omega_h \rvert_{\Hone}^2 + \left(\int_{\Omega} \omega_h\,dx\right)^2,
\end{align*}
and
\begin{align*}
    \|\omega_h\|^2_{\Ltwoboundaryb} &\lesssim \|\nabla\zeta\|_{L^2(\Omega)}^2+ \|\omega_h-\zeta\|^2_{\Ltwoboundary}  +\left(\int_{\Gamma} \omega_h\,ds\right)^2\\
    &\lesssim (1+{h}) \lvert \omega_h \rvert_{\Hone}^2 + \left(\int_{\Gamma} \omega_h\,ds\right)^2.
\end{align*}
We here yield the anticipated outcomes.

\end{proof}

\begin{rem}
   This finding indicates that when the average value of a CR element on the boundary can be evaluated, the classical trace theorem from the $H^1$ Sobolev space can be extended to the non-conforming CR space.
\end{rem}

With this insight, we now shift our focus to formulating a trace argument for hybridizable spaces, employing a methodology akin to that used in proving the Poincaré inequality. Our goal is to establish a bridge between $\uh$ and $\uhath$ by examining the discrepancies between their average values and their individual values, and subsequently mapping these boundary averages into CR spaces. This process leads to the following theorem:

\begin{thm}\label{thm:trace}
    Let $(\uh,\uhath)\in\Xhk$, then the following trace inequalities hold:
    \begin{equation}\label{eq:hybrid_trace_u}
        \|\uh\|_{\Ltwoboundaryb}^2\lesssim {h}\lvert \uh\rvert_{\Hone}^2+\|\uh-\uhath\|^2_{L^2(\pThb)}+(1+{h})\lvert \CRlifting(\uhataverage) \rvert_{\Hone}^2 + \left(\int_{\Gamma} \uhath\,ds\right)^2
    \end{equation}
    and 
    \begin{equation}\label{eq:hybrid_trace_uhat}
        \|\uhath\|_{\Ltwoboundaryb}^2\lesssim {h}\lvert \uh\rvert_{\Hone}^2+\|\uh-\uhath\|^2_{L^2(\pThb)}+(1+{h})\lvert \CRlifting(\uhataverage) \rvert_{\Hone}^2 + \left(\int_{\Gamma} \uhath\,ds\right)^2.
    \end{equation}
    Here $\Gamma $ has the same definition as Lemma \ref{lem:CR_trace_inequality}.
\end{thm}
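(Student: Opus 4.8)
The plan is to mirror the structure of the Poincar\'e inequality proof (Theorem~\ref{thm:Poincare}) exactly, decomposing each trace norm over the boundary faces into a term controlled by the simplex-wise trace/Poincar\'e machinery and a term that lives in the CR space, which is then handled by Lemma~\ref{lem:CR_trace_inequality}. The central object is again $\CRlifting(\uhataverage)$, the CR lifting of the facewise average of $\uhath$, which carries the boundary-mean information while having no jump in the integral sense.

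\textbf{Proof of the bound for $\uhath$ (inequality \eqref{eq:hybrid_trace_uhat}).} I would start here because $\uhath$ is ``closer'' to the CR element. On each boundary face $e\in\pThb$, split
\begin{equation*}
    \|\uhath\|_{L^2(\Omega;e)}^2 \lesssim \|\uhath-\uhathaverageEdge\|_{L^2(\Omega;e)}^2 + \|\uhathaverageEdge\|_{L^2(\Omega;e)}^2,
\end{equation*}
using the facewise average \eqref{eq:side_average_hat}. For the first term one controls the oscillation of $\uhath$ about its face-average; since $\uhath-\uhathaverageEdge$ has zero mean on $e$, a Poincar\'e-type estimate on the face (together with a trace/inverse bound relating the face norm to $|\uh|_{\Hone}$ and the jump $\|\uh-\uhath\|$) should produce the $h_K|\uh|_{\Hone}^2$ and $\|\uh-\uhath\|^2_{L^2(\Omega;\pThb)}$ contributions. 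For the second term, $\uhathaverageEdge$ is exactly the facewise constant $\uhataverage|_e$, so by Lemma~\ref{lem:CRLifting_estimate} it is dominated by $\|\CRlifting(\uhataverage)\|^2_{L^2(\Omega;e)}$; summing over boundary faces and invoking the CR trace inequality \eqref{eq:CR_trace_estimate2} converts this into $(1+h_K)|\CRlifting(\uhataverage)|_{\Hone}^2 + (\int_\Gamma \CRlifting(\uhataverage)\,ds)^2$. Finally, as computed in the proof of Theorem~\ref{thm:Poincare}, $\int_\Gamma \CRlifting(\uhataverage)\,ds = \int_\Gamma \uhath\,ds$, yielding \eqref{eq:hybrid_trace_uhat}.

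\textbf{Proof of the bound for $\uh$ (inequality \eqref{eq:hybrid_trace_u}).} This reduces to the previous case via the triangle inequality: on each $e\in\pThb$,
\begin{equation*}
    \|\uh\|_{L^2(\Omega;e)}^2 \lesssim \|\uh-\uhath\|_{L^2(\Omega;e)}^2 + \|\uhath\|_{L^2(\Omega;e)}^2.
\end{equation*}
Summing gives $\|\uh\|_{\Ltwoboundaryb}^2 \lesssim \|\uh-\uhath\|^2_{L^2(\Omega;\pThb)} + \|\uhath\|_{\Ltwoboundaryb}^2$, and the second summand is bounded by \eqref{eq:hybrid_trace_uhat}; since both target right-hand sides coincide, \eqref{eq:hybrid_trace_u} follows immediately.

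\textbf{Main obstacle.} The delicate point is the face-oscillation term $\|\uhath-\uhathaverageEdge\|_{L^2(\Omega;e)}$, i.e.\ bounding the deviation of the genuinely $\Pk$ skeleton function $\uhath$ from its own average purely in terms of the volumetric quantities $|\uh|_{\Hone}$ and the jump $\|\uh-\uhath\|$. One cannot directly Poincar\'e-bound a function on the face by a volume gradient; the trick is to interpose $\uh$, writing $\uhath - \uhathaverageEdge = (\uhath-\uh) + (\uh - \uhaverageEdge) + (\uhaverageEdge - \uhathaverageEdge)$ where $\uhaverageEdge$ is the face-average of the trace of $\uh$, then using the simplex trace inequality (Lemma~\ref{lemma:trace_inequality_simplex}, Remark~\ref{rem:trace_inequality_hK}) together with the simplex Poincar\'e estimate \eqref{eq:mean_boundary_optimal_Poincare_constant} to absorb $\uh-\uhaverageEdge$ into $h_K|\uh|_{\Hone}$, while the two difference terms feed into $\|\uh-\uhath\|^2_{L^2(\Omega;\pThb)}$. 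Keeping the powers of $h_K$ correct through these trace passages—so that the final coefficient on $|\uh|_{\Hone}^2$ is $h_K$ rather than $1/h_K$—is where the care is required.
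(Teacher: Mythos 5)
Your proposal is correct and uses essentially the same machinery as the paper: the facewise split into average plus oscillation, the simplex trace inequality (Lemma \ref{lemma:trace_inequality_simplex}) combined with the simplex Poincar\'e estimate (Lemma \ref{lem:Poincare_Frederich_discrete_boundary_average}) for the oscillation, Lemma \ref{lem:CRLifting_estimate} plus the CR trace inequality \eqref{eq:CR_trace_estimate2} for the average, and the identity $\int_\Gamma \CRlifting(\uhataverage)\,ds=\int_\Gamma \uhath\,ds$. The only difference is the order — the paper bounds $\|\uh\|_{\Ltwoboundaryb}$ directly and gets \eqref{eq:hybrid_trace_uhat} by the triangle inequality, while you prove \eqref{eq:hybrid_trace_uhat} first, which forces the extra three-term interposition of $\uh$ in the oscillation term that the paper's ordering avoids — but your handling of that step (trace then Poincar\'e on $\uh-\uhaverageEdge$, constants differing by the face average of $\uh-\uhath$) is sound and keeps the powers of $h_K$ correct.
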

\begin{proof}
We evaluate the estimate for $\|\uh\|_{\Ltwoboundaryb}$. This norm is derived from the trace of $\uh$ on $\pThb$. By the definition of $\pThb$, the boundary of the mesh can be expressed as $\pThb=\bigcup_{i=1}^{N_{h_K}}e_i$ for a given mesh, where $\pThb$ consists of $N_{h}$ distinct faces. Therefore, we can state:
\begin{equation*}
    \|\uh\|_{\Ltwoboundaryb}^2=\sum_{i=1}^{N_{h}} \|\uh\|_{L^2(e_i)}^2.
\end{equation*}
{For each boundary face $e_i$, let $K_i$ denote the element it belongs to. Note that $K_i$ may not be unique; in fact, for each interior face $e_i$, there exist two adjacent elements, denoted as $K_i^+$ and $K_i^-$. However, for simplicity, we refer to them collectively as $K_i$ in this context.} Then we can decompose $\uh$ into two parts and achieve:
\begin{equation}\label{eq:uh_trace_split}
    \|\uh\|_{L^2(e_i)}^2\lesssim \|\uh-\uhaverageEdgei\|_{L^2(e_i)}^2 + \|\uhaverageEdgei\|^2_{L^2(e_i)}
\end{equation}
where $\uhaverageEdgei$ is the average value of $\uh$ on face $e_i$, defined by:
\begin{equation*}
    \uhaverageEdgei=\frac{1}{|e_i|}\int_{e_i}\uh\,ds.
\end{equation*}
For the first term, utilizing Lemma \ref{lemma:trace_inequality_simplex} and Lemma \ref{lem:Poincare_Frederich_discrete_boundary_average}, which are the discrete trace inequality and discrete Poincaré inequality in a simplex, we can establish bound as follows:
\begin{equation}\label{eq:uh_trace_poincare}
    \|\uh-\uhaverageEdgei\|_{L^2(e_i)}^2\lesssim \frac{1}{{h_{K_i}}}\|\uh-\uhaverageEdgei\|^2_{L^2(K_i)}\lesssim {h_{K_i}}\|\nabla\uh\|^2_{L^2(K_i)} {\lesssim h \|\nabla\uh\|^2_{L^2(K_i)} },
\end{equation}
Given that $\uhaverageEdgei$ is a constant, we can estimate the second term as follows:
\begin{equation*}
    \begin{aligned}
        (\uhaverageEdgei)^2&=\frac{1}{|e_i|^2}\left(\int_{e_i}\uh\,ds\right)^2\\
        &\lesssim  \frac{1}{|e_i|^2} \left[\int_{e_i}(\uh-\uhath)\,ds\right]^2 + \frac{1}{|e_i|^2}\left(\int_{e_i}\uhath\,ds\right)^2\\
        &\lesssim \frac{1}{|e_i|}\|\uh-\uhath\|^2_{L^2(e_i)}+(\uhathaverageEdgei)^2,
    \end{aligned}
\end{equation*}
where $\uhathaverageEdgei$ is detailed in \eqref{eq:side_average_hat}. Consequently,
\begin{equation*}
    \|\uhaverageEdgei\|^2_{L^2(e_i)} = |e_i|(\uhaverageEdgei)^2\lesssim \|\uh-\uhath\|^2_{L^2(e_i)} + \|\uhathaverageEdgei\|^2_{L^2(e_i)}.
\end{equation*}
Following Lemma \ref{lem:CRLifting_estimate} and especially, estimate \eqref{eq:uhat_average_lifting_comparison_estimate}, it's evident that:
\begin{equation*}
    \|\uhaverageEdgei\|^2_{L^2(e_i)} \lesssim \|\uh-\uhath\|^2_{L^2(e_i)} + \|\CRlifting(\uhataverage)\|^2_{L^2(e_i)},
\end{equation*}
with $\uhataverage$ specified in \eqref{eq:uhat_average_def}. Merging these results leads to:
\begin{equation}\label{eq:trace_uh_face_estimate}
    \|\uh\|_{L^2(e_i)}^2\lesssim {h}\|\nabla\uh\|^2_{L^2(K_i)}+\|\uh-\uhath\|^2_{L^2(e_i)} + \|\CRlifting(\uhataverage)\|^2_{L^2(e_i)}.
\end{equation}
Summarizing these results gives:
\begin{equation}
\label{eq:trace_uh_sum_estimate}
    \|\uh\|_{\Ltwoboundaryb}^2=\sum_{i=1}^{N_h} \|\uh\|_{L^2(e_i)}^2\lesssim {h}\lvert \uh\rvert_{\Hone}^2+\|\uh-\uhath\|^2_{L^2(\pThb)}+\|\CRlifting(\uhataverage)\|_{L^2(\pThb)}^2.
\end{equation}
By \eqref{eq:CR_trace_estimate2} from Lemma \ref{lem:CR_trace_inequality} and the definition of CR lifting operator, the trace of $\CRlifting(\uhataverage)$ can be bounded as
\begin{equation}
    \label{eq:CR_lifting_trace_estimate}
      \begin{aligned}
          \|\CRlifting(\uhataverage)\|^2_{\Ltwoboundaryb} &\lesssim (1+{h}) \lvert \CRlifting(\uhathaverageEdge) \rvert_{\Hone}^2 + \left(\int_{\Gamma} \CRlifting(\uhataverage)\,ds\right)^2\\
      &=(1+{h})\lvert \CRlifting(\uhathaverageEdge) \rvert_{\Hone}^2 + \left(\int_{\Gamma} \uhataverage\,ds\right)^2\\
    &=(1+{h})\lvert \CRlifting(\uhathaverageEdge) \rvert_{\Hone}^2 + \left(\int_{\Gamma} \uhath\,ds\right)^2.
      \end{aligned}
\end{equation}
Inserting \eqref{eq:uh_trace_poincare}, \eqref{eq:trace_uh_sum_estimate} and \eqref{eq:CR_lifting_trace_estimate} into \eqref{eq:uh_trace_split}, we have shown the estimate \eqref{eq:hybrid_trace_u}.
To obtain $\eqref{eq:hybrid_trace_uhat}$, we simply need to notice the fact that 
\begin{equation*}
    \|\uhath\|_{\Ltwoboundaryb}^2\lesssim \|\uh\|_{\Ltwoboundaryb}^2 + \|\uhath-\uh\|_{\Ltwoboundaryb}^2
\end{equation*}
and then the desired result follows.
\end{proof}

\begin{rem}
    In the estimates \eqref{eq:hybrid_trace_u} and \eqref{eq:hybrid_trace_uhat}, the term $\left(\int_{\Gamma} \uhath\,ds\right)^2$ can be replaced with $\left(\int_{\Gamma} \uh\,ds\right)^2$. This adjustment is viable as the term $\|\uhath-\uh\|_{\Ltwoboundaryb}^2$ is involved in the estimate. Moreover, it's also practical to exchange this term with the integral of $\CRlifting(\uhataverage)$ over the entire domain $\Omega$, opting for \eqref{eq:CR_trace_estimate1} over \eqref{eq:CR_trace_estimate2} to establish the trace estimate for $\|\CRlifting(\uhataverage)\|_{L^2(\pThb)}^2$.
\end{rem}

\section{Application to HDG formulation}\label{sec:HDG}

In this section, we will discuss how Poincar\'e inequalities and trace inequalities developed above can actually benefit in analysis for problems set up by HDG method. In particular, we will use these tools to obtain uniform energy estimates for the solutions for second-order elliptic equations in dependent of mesh size $h_K$ with a minimal regularity assumption. For sake of simplicity, we will only consider Poisson equation as a model problem and the analysis can be extended to more general case of second-order elliptic equations
easily.

\subsection{Boundary Lifting Operator}\label{subsec:boundary_lifting}
As a start, we introduce the definition of a discrete gradient operator, which will be called a lifting operator in the following. This operator is designed to approximate the distributional gradient which is also a common methodology in analyzing discontinuous schemes presented in related works \cite{buffa2009compact,di2010discrete,kikuchi2012rellich,kirk2022numerical,kirk2023convergence,shen2017hybridizable,ten2006discontinuous}. The cornerstone of this discrete gradient operator lies in a critical observation regarding the nature of functions within $\Xhk$ \cite{buffa2009compact}. Specifically, it is noted that these functions exhibit discontinuities, which in turn implies that their distributional gradient is influenced by the difference of $\uh$ and $\uhath$ on the interfaces of elements. 

In each element $K$, we introduce a local lifting operator $\gradientKlifting: L^2( \partial K) \to \pmb{\mathcal{P}}^k(K)$, inspired by the previously discussed contents. This operator transforms a function $\hat{\mu}$, defined on $\partial K$, into a vector-valued piecewise polynomial function. Specifically, for each function $\hat{\mu}\in L^2( \partial K)$, we define $\gradientKlifting (\hat{\mu})$ as follows:
\begin{equation}\label{eq:lifting_operator}
    \int_K\gradientKlifting (\hat{\mu})\cdot \pmb{\omega_h}\,dx = \int_{\partial K} \hat{\mu}\, \pmb{\omega_h}\cdot\pmb{n}\,ds
\end{equation}
for any $\pmb{\omega_h}\in \pmb{\mathcal{P}}^k(K)$. The global lifting operator $\gradientlifting: \Ltwoboundary\to \Vhk$ is then defined through the restriction to each element, such that $\gradientlifting(\hat{\mu})|_K = \gradientKlifting(\hat{\mu}|_K)$ for every $\hat{\mu}\in\Ltwoboundary$. Hence, it should comply with the form:
\begin{equation}
    \int_\Omega \gradientlifting (\hat{\mu})\cdot \pmb{\omega_h}\,dx = \sum_{K\in\Th} \int_K \gradientlifting (\hat{\mu})\cdot \pmb{\omega_h}\,dx = \sum_K \int_{\partial K} \hat{\mu}\, \pmb{\omega_h}\cdot\pmb{n}\,ds, \quad\quad \forall \pmb{\omega_h}\in\Vhk.
\end{equation}

The following lemma provides a local estimate for this lifting operator, emerging as a direct consequence of the discrete trace inequality Lemma \ref{lemma:trace_inequality_simplex}.
\begin{lem}
\label{lem:lifting_operator}
For every $\hat{\mu}\in L^2(\partial K)$,
\begin{equation*}
    \|\gradientKlifting(\hat{\mu})\|^2_{\pmb{L}^2(K)}\lesssim \frac{1}{h_K}\|\hat{\mu}\|_{L^2(\partial K)}^2.
\end{equation*}
\end{lem}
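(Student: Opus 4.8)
The plan is to use the standard ``test the defining relation against the lifting itself'' energy argument. Since $\gradientKlifting(\hat{\mu})$ is by construction an element of $\pmb{\mathcal{P}}^k(K)$, it is an admissible test function $\pmb{\omega_h}$ in the defining identity \eqref{eq:lifting_operator}. Substituting $\pmb{\omega_h}=\gradientKlifting(\hat{\mu})$ turns the squared interior $L^2$ norm into a boundary integral:
\[
\|\gradientKlifting(\hat{\mu})\|^2_{\pmb{L}^2(\Omega;K)} = \int_{\Kboundary}\hat{\mu}\,\gradientKlifting(\hat{\mu})\cdot\pmb{n}\,ds .
\]

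Next I would bound the right-hand side. Applying the Cauchy--Schwarz inequality on $\Kboundary$, together with $|\pmb{n}|=1$ so that $|\gradientKlifting(\hat{\mu})\cdot\pmb{n}|\le|\gradientKlifting(\hat{\mu})|$ pointwise, gives
\[
\int_{\Kboundary}\hat{\mu}\,\gradientKlifting(\hat{\mu})\cdot\pmb{n}\,ds \le \|\hat{\mu}\|_{L^2(\Omega;\Kboundary)}\,\|\gradientKlifting(\hat{\mu})\|_{\pmb{L}^2(\Omega;\Kboundary)} .
\]
The crucial step is then to trade the boundary norm of the lifting for its interior norm. Because $\gradientKlifting(\hat{\mu})$ is a vector-valued polynomial in $\pmb{\mathcal{P}}^k(K)$, the discrete trace inequality of Lemma \ref{lemma:trace_inequality_simplex}, in its vector-valued form (Remark \ref{rem:trace_inequality_vector}) and rescaled form (Remark \ref{rem:trace_inequality_hK}), applies on each face; summing over the $d+1$ faces of $K$ yields
\[
\|\gradientKlifting(\hat{\mu})\|_{\pmb{L}^2(\Omega;\Kboundary)}\lesssim \frac{1}{h_K^{1/2}}\,\|\gradientKlifting(\hat{\mu})\|_{\pmb{L}^2(\Omega;K)} .
\]

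Combining the three displays gives $\|\gradientKlifting(\hat{\mu})\|^2_{\pmb{L}^2(\Omega;K)}\lesssim h_K^{-1/2}\,\|\hat{\mu}\|_{L^2(\Omega;\Kboundary)}\,\|\gradientKlifting(\hat{\mu})\|_{\pmb{L}^2(\Omega;K)}$; dividing through by $\|\gradientKlifting(\hat{\mu})\|_{\pmb{L}^2(\Omega;K)}$ and squaring produces the claimed estimate. I do not expect a genuine obstacle: the entire content reduces to the discrete trace inequality already recorded in Lemma \ref{lemma:trace_inequality_simplex}, and the argument is the usual self-testing trick. The only points requiring minor care are confirming that $\pmb{\omega_h}=\gradientKlifting(\hat{\mu})$ is a legitimate choice in \eqref{eq:lifting_operator}, treating the degenerate case $\gradientKlifting(\hat{\mu})=0$ separately so the division is justified, and keeping track of the shape-regularity constant absorbed into $\lesssim$ when the face-wise trace bounds are summed over $\partial K$.
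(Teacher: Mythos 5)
Your proposal is correct and follows essentially the same argument as the paper: test \eqref{eq:lifting_operator} with $\pmb{\omega_h}=\gradientKlifting(\hat{\mu})$, apply Cauchy--Schwarz on $\Kboundary$, and use the discrete trace inequality of Lemma \ref{lemma:trace_inequality_simplex} (with Remarks \ref{rem:trace_inequality_vector} and \ref{rem:trace_inequality_hK}) to convert the boundary norm into $h_K^{-1/2}$ times the interior norm before dividing and squaring. The only cosmetic difference is that you first bound $|\gradientKlifting(\hat{\mu})\cdot\pmb{n}|$ by $|\gradientKlifting(\hat{\mu})|$ pointwise while the paper applies the trace bound directly to the normal component, and your attention to the degenerate case $\gradientKlifting(\hat{\mu})=0$ is a minor point the paper leaves implicit.
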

\begin{proof}
    Let $\pmb{\omega_h}=\gradientKlifting(\hat{\mu})$ in equation \eqref{eq:lifting_operator}, by Cauchy-Schwarz inequality we have
    \begin{equation*}
        \|\gradientKlifting(\hat{\mu})\|^2_{\pmb{L}^2(K)}=\int \hat{\mu}\, \gradientKlifting(\hat{\mu})\cdot\pmb{n}\,ds\leq \|\hat{\mu}\|_{L^2(\partial K)}\|\gradientKlifting(\hat{\mu})\cdot\pmb{n}\|_{L^2(\partial K)}.
    \end{equation*}
    By Lemma \ref{lemma:trace_inequality_simplex}, combining with Remark \ref{rem:trace_inequality_vector} and Remark \ref{rem:trace_inequality_hK}, we can conclude that
    \begin{equation*}
        \|\gradientKlifting(\hat{\mu})\cdot\pmb{n}\|_{L^2(\partial K)}\lesssim \frac{1}{h_K^{\frac{1}{2}}}\|\gradientKlifting(\hat{\mu})\|_{L^2(K)}.
    \end{equation*}
    Combining these two formulas and the claim of this lemma follows.
\end{proof}

This result can be extended to be an estimate for the global lifting operator by direct addition of the part in each element, which is summarized as follows.

\begin{cor}
For $\hat{\mu}
\in L^2(\partial\Th)$,  
\begin{equation*}
    \|\gradientlifting(\hat{\mu})\|^2_{\Ltwovector}\lesssim \frac{1}{{h}}\|\hat{\mu}\|_{\Ltwoboundary}^2.
\end{equation*}
\end{cor}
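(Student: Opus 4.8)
The plan is to derive the global bound by a straightforward summation of the local estimate over all simplices, exploiting the fact that the global lifting operator is defined elementwise. First I would invoke the definition $\gradientlifting(\hat{\mu})|_K = \gradientKlifting(\hat{\mu}|_K)$ to split the global norm into a sum of local contributions,
\begin{equation*}
    \|\gradientlifting(\hat{\mu})\|^2_{\Ltwovector} = \sum_{K\in\Th} \|\gradientKlifting(\hat{\mu}|_K)\|^2_{\pmb{L}^2(\Omega;K)}.
\end{equation*}
Then I would apply Lemma \ref{lem:lifting_operator} term by term, which bounds each summand by $\tfrac{1}{h_K}\|\hat{\mu}\|^2_{L^2(\Omega;\Kboundary)}$ with a constant depending only on the shape-regularity of the mesh and hence uniform across all $K$.

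The final step is to recombine the resulting sum using the definition of the face-norm, namely $\|\hat{\mu}\|_{\Ltwoboundary}^2 = \sum_{K\in\Th}\|\hat{\mu}\|^2_{L^2(\Omega;\Kboundary)}$, so that
\begin{equation*}
    \sum_{K\in\Th} \frac{1}{h_K}\|\hat{\mu}\|^2_{L^2(\Omega;\Kboundary)} \lesssim \frac{1}{h_K}\|\hat{\mu}\|_{\Ltwoboundary}^2,
\end{equation*}
which yields the claim. The only point requiring care is the role of $h_K$ on the right-hand side: since the symbol $h_K$ is used throughout the paper as a generic mesh-size parameter, I would emphasize that the shape-regularity constant produced by Lemma \ref{lem:lifting_operator} is the same for every element, so that pulling a common factor $\tfrac{1}{h_K}$ outside the sum is legitimate and no additional mesh hypothesis beyond \textbf{A1} is needed. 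Thus the main (and essentially only) obstacle is bookkeeping the uniformity of the local constant rather than any substantive estimate, making this corollary an immediate consequence of its local counterpart.
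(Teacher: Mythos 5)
Your proposal is correct and takes essentially the same route as the paper, which states that the corollary follows ``by direct addition of the part in each element,'' i.e., summing the local estimate of Lemma \ref{lem:lifting_operator} over all $K\in\Th$ exactly as you do. Your closing remark about interpreting the factor $\frac{1}{h_K}$ uniformly across elements is a reasonable clarification of the paper's (slightly abusive) use of $h_K$ as a generic mesh-size parameter in global statements, and requires nothing beyond assumption \textbf{A1}.
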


\subsection{Problem Setup}

We will briefly outline the HDG method's formulation and structure for the Poisson problem, then explore how the Poincaré inequality and the trace inequality we derived can be used for stability analysis towards it. This contrasts with \cite{jiang2023stability}, where a translation argument was employed for deducing stability.

In this discussion, we address the Poisson equation with mixed boundary conditions as a model problem. Other types of boundary conditions can also be accommodated within this framework. The strong form of the Poisson equation in $\Omega$ is given by:

\begin{equation}\label{eq:Poisson_equation}
\left\{
    \begin{array}{ll}
        -\Delta u = f & \text{in } \Omega, \\
       u = u_D & \text{on } \Gamma_D,\\
      \nabla u\cdot\pmb{n} = u_N & \text{on } \Gamma_N.
    \end{array}
\right.
\end{equation}
Here, $\partial\Omega=\bar{\Gamma}_D\bigcup \bar{\Gamma}_N$ and $\Gamma_D\bigcap\Gamma_N=\emptyset$, with $f$ serving as the source term.

A mixed formulation is introduced by defining $\pmb{p}=-\nabla u$, allowing the system to be reformulated as:

\begin{equation}\label{eq:Poisson_equation_mixed}
\left\{
    \begin{array}{ll}
        \nabla \cdot \pmb{p} = f & \text{in } \Omega, \\
         \pmb{p} + \nabla u = 0 & \text{in } \Omega, \\
   u=u_D & \text{on } \Gamma_D,\\
    \pmb{p}\cdot \pmb{n} = -u_N &  \text{on } \Gamma_N.
    \end{array}
\right.
\end{equation}
In situations where the solution possesses sufficient regularity, these two formulations are equivalent. To solve this problem numerically, the domain is partitioned into a mesh $\Th$, and we will continue employing the notations introduced in Section \ref{subsec:notation}. Upon establishing a mesh, we adhere to the standard HDG formulation for second-order elliptic equations as documented in \cite{cockburn2004characterization,cockburn2009unified,cockburn2009superconvergent,sevilla2016tutorial} to devise the scheme. Specifically, within each element $K$, our objective is to find $(\pmb{p}_h,\uh)\in \Vhk\times \Uhk$ fulfilling:
\begin{equation}
    \label{eq:HDG_formulation_p_nabla_u}
    (\ph,\qh)_K = (\uh,\nabla\cdot\qh)_K-\langle \uhath,\qh\cdot\pmb{n}  \rangle _{\Kboundary},
\end{equation}
and
\begin{equation}\label{eq:HDG_formulation_p_f}
    -(\ph, \nabla\vh)_K+\langle \hat{\pmb{p}}_h\cdot\pmb{n}, v \rangle_{\Kboundary} = (f,v)_K,
\end{equation}
for every test function pair $(\qh,\vh)\in\Vhk\times\Uhk$. The Dirichlet boundary condition is imposed as \cite{cockburn2010projection}:
\begin{equation}
    \label{eq:HDG_Dirichlet}
    \langle \uhath,\hat{\mu}\rangle_{\Gamma_D} = \langle u_D,\hat{\mu}\rangle_{\Gamma_D},
\end{equation}
for all $\hat{\mu}\in\Fhk$. Numerical traces of the fluxes in the HDG scheme are typically chosen as \cite{cockburn2009unified,nguyen2009implicit,nguyen2009implicitnonlinear,nguyen2010hybridizable,nguyen2011implicit}:
\begin{equation}
    \label{eq:flux_trace}
    \hat{\pmb{p}}_h\cdot\pmb{n} = {\pmb{p}_h}\cdot\pmb{n}+\tau(\uh-\uhath),
\end{equation}
where $\tau$ is a stabilization function significantly affecting the scheme's effectiveness and accuracy. Numerous studies have been dedicated to this selection, for instance, \cite{cockburn2010projection,cockburn2009superconvergent,kirby2012cg} and references therein. It is noted that choosing $\tau$ as a constant on a simplicial mesh ensures optimal convergence order. However, selecting the stabilization function to be of order $O(\frac{1}{h_K})$ results in a loss of one convergence order in both the locally post-processed approximation to the scalar variable and the approximate to the gradient. Yet, conducting stability analysis for the constant case presents more challenges from a traditional standpoint. We aim to focus on this scenario using the newly developed tools above.

Once we have established the local problems, a global problem can be formulated to determine $\uhath$, considering the behavior of the numerical fluxes as outlined in \cite{sevilla2016tutorial}:
\begin{equation}\label{eq:HDG_formulation_flux_jump}
    \left\langle \hat{\pmb{p}}_h\cdot\pmb{n},\hat{\mu}\right\rangle_{\pThi}+ \left\langle \hat{\pmb{p}}_h\cdot\pmb{n},\hat{\mu} \right\rangle_{\Gamma_N} = -\left\langle \hat{\mu},u_N\right\rangle_{\Gamma_N}.
\end{equation}
This equation also implements the Neumann boundary.

Merging \eqref{eq:HDG_formulation_p_nabla_u}-\eqref{eq:HDG_formulation_flux_jump} leads to summarizing the HDG formulation as the following task: Finding $(\ph,\uh,\uhath)\in \Vhk\times\Uhk\times\Fhk$ such that:
\begin{subequations}\label{eq:HDG_scheme}
\begin{equation}
     (\ph,\qh)_{\Th} = (\uh,\nabla\cdot\qh)_{\Th}-\langle \uhath,\qh\cdot\pmb{n}  \rangle _{\partial\Th}, \label{eq:HDG_scheme1a}
\end{equation}
\begin{equation}
   \tau \left\langle \uh-\uhath, \vh-\hat{v}_h \right\rangle_{\partial\Th}+ (\nabla\cdot \ph, \vh)_{\Th}-\left\langle  \ph\cdot\pmb{n}, \hat{v}_h\right\rangle_{\partial\Th} = (f,v)_{\Th} + \langle u_N,\hat{v}_h\rangle_{\Gamma_N} ,\label{eq:HDG_scheme1b}
\end{equation}
\begin{equation}
  \langle\uhath,\hat{v}_h\rangle_{\Gamma_D} = \langle u_D,\hat{v}_h\rangle_{\Gamma_D} \label{eq:HDG_scheme1c}
\end{equation}
\end{subequations}
for all $(\qh,\vh,\hat{v}_h)\in \Vhk\times\Uhk\times\Fhk$. For simplicity, $\tau$ will be assumed as a fixed positive constant throughout. In a more general context, $\tau$ could be regarded as a function defined over the skeleton with a positive lower bound. The ensuing analysis would remain applicable to such a scenario. It's also assumed, without loss of generality, that the mesh properly decomposes $\Gamma_D$ and $\Gamma_N$, meaning each can be expressed as a union of non-disjoint boundary faces. The well-posedness of this scheme has been presented in several studies, such as \cite{cockburn2010hybridizable}. The subsequent energy-type argument is standard and directly follows from the HDG scheme.

\begin{lem}\label{lem:energy_HDG}
    The numerical solution $(\ph,\uh,\uhath)\in \Vhk\times\Uhk\times\Fhk$, solving \eqref{eq:HDG_scheme}, satisfies:
    \begin{equation}
        \label{eq:hdg_energy}
        \|\ph\|^2_{\Ltwovector}+\tau\|\uh-\uhath\|^2_{\Ltwoboundary}=(f,\uh)_{\Th} + \langle u_N,\uhath\rangle_{\Gamma_N},
    \end{equation}
    and
    \begin{equation}
        \label{eq:Dirichlet_estimate}
        \|\uhath\|_{L^2(\Gamma_D)}^2 \leq \|u_D\|_{L^2(\Gamma_D)}^2.
    \end{equation}
\end{lem}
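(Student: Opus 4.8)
The plan is to prove both statements by the classical energy argument, testing the scheme against its own solution; none of the Poincaré or trace machinery from the earlier sections enters here. For \eqref{eq:hdg_energy} I would first take $\qh=\ph$ in \eqref{eq:HDG_scheme1a}, which gives
\begin{equation*}
\|\ph\|^2_{\Ltwovector}=(\uh,\nabla\cdot\ph)_{\Th}-\langle\uhath,\ph\cdot\pmb{n}\rangle_{\partial\Th},
\end{equation*}
and then take $(\vh,\hat{v}_h)=(\uh,\uhath)$ in \eqref{eq:HDG_scheme1b}, so that the stabilization term collapses to the full square $\tau\|\uh-\uhath\|^2_{\Ltwoboundary}$, leaving
\begin{equation*}
\tau\|\uh-\uhath\|^2_{\Ltwoboundary}+(\nabla\cdot\ph,\uh)_{\Th}-\langle\ph\cdot\pmb{n},\uhath\rangle_{\partial\Th}=(f,\uh)_{\Th}+\langle u_N,\uhath\rangle_{\Gamma_N}.
\end{equation*}

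The decisive point --- and the structural reason the mixed variable $\ph$ was introduced --- is that the two coupling terms in the second identity coincide with those in the first. By symmetry of the volume inner product, $(\nabla\cdot\ph,\uh)_{\Th}=(\uh,\nabla\cdot\ph)_{\Th}$, and of the skeleton pairing, $\langle\ph\cdot\pmb{n},\uhath\rangle_{\partial\Th}=\langle\uhath,\ph\cdot\pmb{n}\rangle_{\partial\Th}$; hence $(\nabla\cdot\ph,\uh)_{\Th}-\langle\ph\cdot\pmb{n},\uhath\rangle_{\partial\Th}$ equals exactly $\|\ph\|^2_{\Ltwovector}$ by the first identity. Substituting this back into the second identity yields \eqref{eq:hdg_energy} at once; the argument is purely algebraic and requires no inequalities.

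For the Dirichlet bound \eqref{eq:Dirichlet_estimate} I would test the boundary condition \eqref{eq:HDG_scheme1c} with $\hat{v}_h=\uhath$, which is admissible since $\uhath\in\Fhk$, obtaining $\|\uhath\|^2_{L^2(\Omega;\Gamma_D)}=\langle u_D,\uhath\rangle_{\Gamma_D}$. The Cauchy-Schwarz inequality on $\Gamma_D$ bounds the right-hand side by $\|u_D\|_{L^2(\Omega;\Gamma_D)}\|\uhath\|_{L^2(\Omega;\Gamma_D)}$, and cancelling one factor of $\|\uhath\|_{L^2(\Omega;\Gamma_D)}$ and squaring gives \eqref{eq:Dirichlet_estimate}.

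The step that calls for the most care, and the only real (though modest) obstacle, is the bookkeeping of the skeleton pairings $\langle\cdot,\cdot\rangle_{\partial\Th}$: each interior face is counted from both adjacent elements with opposite outward normals, while the boundary faces split across $\pThi$, $\Gamma_D$, and $\Gamma_N$. I must verify that the coupling term $\langle\uhath,\ph\cdot\pmb{n}\rangle_{\partial\Th}$ arising from \eqref{eq:HDG_scheme1a} and the term $\langle\ph\cdot\pmb{n},\uhath\rangle_{\partial\Th}$ arising from \eqref{eq:HDG_scheme1b} are genuinely identical over every portion of the skeleton --- in particular on the Dirichlet faces, where $\uhath$ takes its prescribed values --- so that they cancel exactly and leave the clean right-hand side $(f,\uh)_{\Th}+\langle u_N,\uhath\rangle_{\Gamma_N}$ with no residual boundary contribution.
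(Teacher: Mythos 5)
Your proof is correct and follows essentially the same route as the paper: the paper's proof likewise sets $\qh=\ph$, $\vh=\uh$, $\hat{v}_h=\uhath$ in \eqref{eq:HDG_scheme1a}--\eqref{eq:HDG_scheme1b} and obtains \eqref{eq:Dirichlet_estimate} from \eqref{eq:HDG_scheme1c} with $\hat{v}_h=\uhath$ and Cauchy--Schwarz, so your explicit substitution of the first identity into the second is exactly the intended argument, and your ``bookkeeping'' worry is vacuous since the two coupling terms are literally the same pairing $\langle \uhath,\ph\cdot\pmb{n}\rangle_{\partial\Th}$. One remark: you (correctly) read the stabilization term as $\tau\langle \uh-\uhath,\vh-\hat{v}_h\rangle_{\partial\Th}$, whereas as literally typeset in \eqref{eq:HDG_scheme1b} it is $\tau\langle \uh-\vh,\uhath-\hat{v}_h\rangle_{\partial\Th}$, which would vanish under this substitution --- evidently a typo in the paper, whose own proof tacitly uses the same corrected form you did.
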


\begin{proof}
    By setting $\qh=\ph, \vh=\uh$, and $\hat{v}_h=\uhath$ in \eqref{eq:HDG_scheme1a} and \eqref{eq:HDG_scheme1b}, equation \eqref{eq:hdg_energy} is obtained, whereas \eqref{eq:Dirichlet_estimate} follows from the Cauchy-Schwarz inequality by choosing $\hat{v}_h=\uhath$ in \eqref{eq:HDG_scheme1c}.
\end{proof}

It's important to note that in the HDG scheme, energy estimation is conducted concerning $\ph$, rather than $\gradienth\uh$ or $\nabla\CRlifting(\uhataverage)$. Consequently, the direct application of the analytical techniques we've previously discussed is not feasible without establishing a proper connection between $\ph$ and these elements. Identifying this relationship will be the primary objective in the following discussions.

\subsection{Poincar\'e and Trace Inequalities for HDG}

Building on the previous discussion, our goal here is to elucidate the relationships between $\|\uh\|_{\Ltwo}$, $\|\uhath\|_{\Ltwoboundary}$, and $\|\ph\|_{\Ltwovector}$, which are crucial to the analysis of HDG scheme. We recognize that \eqref{eq:HDG_scheme1a} uniquely defines $\ph$ in terms of $(\uh,\uhath)$, serving as the bridge for us to link $\gradienth \uh$ with $\ph$, and similarly, to connect $\gradienth\CRlifting(\uhataverage)$ with $\ph$. These connections will lay the groundwork for adapting the Poincaré and trace inequalities, previously established for hybridizable spaces, to the specific context of the HDG scheme.

We initiate our analysis by examining $\nabla u$ within each element $K$, which yields the subsequent result:

\begin{lem}\label{lem:gradient_u_estimate}
Given $(\uh,\uhath)\in\Xhk$, and assuming $\ph$ satisfies \eqref{eq:HDG_scheme1a}, then:
    \begin{equation}
        \|\nabla \uh\|^2_{L^2(K)}\lesssim \|\ph\|^2_{\pmb{L}^2(K)}+\frac{1}{h_K}\|u-\uhath\|^2_{L^2(\Kboundary)}.
    \end{equation}
\end{lem}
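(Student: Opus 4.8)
The plan is to test the defining relation \eqref{eq:HDG_scheme1a} against the admissible choice $\qh=\nabla\uh$, integrate by parts, and recognize $\|\nabla\uh\|^2_{L^2(\Omega;K)}$ directly. First I would observe that, since $\uh|_K\in\Pk(K)$, each component of $\nabla\uh$ is a polynomial of degree at most $k$, so $\nabla\uh\in\Pkvector(K)$ and is a legitimate test function in \eqref{eq:HDG_scheme1a} restricted to $K$. Integrating by parts in the volume term gives $(\uh,\nabla\cdot\qh)_K=-(\nabla\uh,\qh)_K+\langle\uh,\qh\cdot\pmb{n}\rangle_{\Kboundary}$, which lets me merge the two boundary contributions of \eqref{eq:HDG_scheme1a} into a single difference $\uh-\uhath$:
\[
(\ph,\qh)_K=-(\nabla\uh,\qh)_K+\langle\uh-\uhath,\qh\cdot\pmb{n}\rangle_{\Kboundary}.
\]

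Substituting $\qh=\nabla\uh$ and rearranging then produces the identity
\[
\|\nabla\uh\|^2_{L^2(\Omega;K)}=-(\ph,\nabla\uh)_K+\langle\uh-\uhath,\nabla\uh\cdot\pmb{n}\rangle_{\Kboundary}.
\]
I would bound the two right-hand terms by Cauchy--Schwarz: the volume term by $\|\ph\|_{\pmb{L}^2(\Omega;K)}\|\nabla\uh\|_{L^2(\Omega;K)}$, and the boundary term by $\|\uh-\uhath\|_{L^2(\Omega;\Kboundary)}\,\|\nabla\uh\cdot\pmb{n}\|_{L^2(\Omega;\Kboundary)}$.

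The key step, and the only real obstacle, is to control the boundary norm of $\nabla\uh$ by its interior norm with the correct power of $h_K$. Since each component of $\nabla\uh$ is a $\Pk$-function, I would invoke the discrete trace inequality in a simplex (Lemma \ref{lemma:trace_inequality_simplex}) together with Remarks \ref{rem:trace_inequality_vector} and \ref{rem:trace_inequality_hK} to get $\|\nabla\uh\cdot\pmb{n}\|_{L^2(\Omega;\Kboundary)}\le\|\nabla\uh\|_{L^2(\Omega;\Kboundary)}\lesssim h_K^{-1/2}\|\nabla\uh\|_{L^2(\Omega;K)}$. This generates precisely the factor $h_K^{-1/2}$ that, once squared, matches the $h_K^{-1}$ weight in the claimed estimate; getting this scaling right is the crux of the argument.

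Finally I would collect the bounds into
\[
\|\nabla\uh\|^2_{L^2(\Omega;K)}\lesssim\|\ph\|_{\pmb{L}^2(\Omega;K)}\|\nabla\uh\|_{L^2(\Omega;K)}+h_K^{-1/2}\|\uh-\uhath\|_{L^2(\Omega;\Kboundary)}\|\nabla\uh\|_{L^2(\Omega;K)},
\]
and close the proof by Young's inequality, absorbing the two occurrences of $\|\nabla\uh\|_{L^2(\Omega;K)}$ (with a small coefficient) into the left-hand side; equivalently, dividing through by $\|\nabla\uh\|_{L^2(\Omega;K)}$ and squaring via $(a+b)^2\le 2a^2+2b^2$. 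Either route yields $\|\nabla\uh\|^2_{L^2(\Omega;K)}\lesssim\|\ph\|^2_{\pmb{L}^2(\Omega;K)}+h_K^{-1}\|\uh-\uhath\|^2_{L^2(\Omega;\Kboundary)}$, as desired. The case $\nabla\uh=0$ is trivial. Everything apart from the trace-scaling step is a routine energy manipulation.
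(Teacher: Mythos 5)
Your proof is correct, but it takes a genuinely different (and more elementary) route than the paper's. After the shared integration-by-parts identity $(\ph,\qh)_K=-(\nabla\uh,\qh)_K+\langle\uh-\uhath,\qh\cdot\pmb{n}\rangle_{\Kboundary}$, the paper does not commit to a single test function: it uses the arbitrariness of $\qh\in\Pkvector(K)$ together with the local lifting operator $\gradientKlifting$ of Section \ref{subsec:boundary_lifting} to convert the boundary term into a volume term, obtaining the exact representation $\nabla\uh=-\ph+\gradientKlifting(\uh-\uhath)$ (all three functions lie in $\Pkvector(K)$, so the weak identity upgrades to equality of functions), and then concludes by the triangle inequality and the lifting stability bound of Lemma \ref{lem:lifting_operator}. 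You instead test with the single choice $\qh=\nabla\uh$ (legitimate, since $\Vhk$ imposes no interelement continuity, so one may localize \eqref{eq:HDG_scheme1a} to a single $K$), apply Cauchy--Schwarz, invoke the discrete trace inequality for the factor $h_K^{-1/2}$, and close by Young/absorption. The two arguments are close cousins: the paper's Lemma \ref{lem:lifting_operator} is itself proved by exactly your self-testing-plus-trace device (setting $\pmb{\omega_h}=\gradientKlifting(\hat\mu)$), so your proof effectively inlines that lemma at the level of $\nabla\uh$ and dispenses with the lifting machinery altogether. What the paper's route buys is the pointwise identity $\nabla\uh=-\ph+\gradientKlifting(\uh-\uhath)$, which is strictly stronger than the norm estimate and sits naturally in the standard HDG lifting framework; what yours buys is self-containedness, at the modest cost of an absorption step and of losing the exact representation. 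Both proofs hinge on the same crux --- the $h_K^{-1/2}$ trace scaling from Lemma \ref{lemma:trace_inequality_simplex} with Remarks \ref{rem:trace_inequality_vector} and \ref{rem:trace_inequality_hK} --- which you correctly identified as the only nontrivial ingredient.
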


\begin{proof}
Using integration by parts, the equation \eqref{eq:HDG_scheme1a} can be reformulated as:
\begin{equation*}
      (\ph,\qh)_K = -(\nabla\uh,\qh)_K+\langle \uh-\uhath,\qh\cdot\pmb{n}  \rangle _{\Kboundary}.
\end{equation*}
Referring to the definition of the lifting operator in \eqref{eq:lifting_operator}, we find an equivalent expression:
\begin{equation*}
    (\nabla \uh, \qh)_K=(-\ph+\gradientKlifting(\uh-\uhath), \qh)_K.
\end{equation*}
This holds true for all $\qh\in\Pk(K)$. Given that $\nabla u, \ph, \gradientKlifting(u-\uhath)$ are all functions in $\Pk(K)$, it follows that:
\begin{equation*}
    \nabla \uh=-\ph+\gradientKlifting(\uh-\uhath).
\end{equation*}
Thus, we deduce that:
\begin{equation*}
    \|\nabla \uh\|^2_{L^2(K)}=\|-\ph+\gradientKlifting(\uh-\uhath)\|^2_{L^2(K)} \lesssim \|\ph\|^2_{L^2(K)}+\|\gradientKlifting(\uh-\uhath)\|_{L^2(K)}^2.
\end{equation*}
The conclusion is easy to drawn by Lemma \ref{lem:lifting_operator}.
\end{proof}

Another estimate derived from \eqref{eq:HDG_scheme1a} is the quantitative relationship between $\ph$ and $\CRlifting(\uhataverage)$.

\begin{lem}
    \label{lem:Crouieux_Raviart_lifting_estimate}
    Given $(\uh,\uhath)\in\Xhk$ and assuming $\ph$ satisfies \eqref{eq:HDG_scheme1a}, then:
    \begin{equation}
        \|\nabla\CRlifting(\uhataverage)\|_{L^2(K)}\leq \|\ph\|_{\pmb{L}^2(K)}
    \end{equation}
    in each element $K$.
\end{lem}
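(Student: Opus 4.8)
The plan is to exploit the fact that $\CRlifting(\uhataverage)$ is affine on each simplex $K$, so that its gradient is a constant vector which can be computed explicitly from the face averages $\uhathaverageEdgei$, and then to recover exactly those same averages from the defining relation \eqref{eq:HDG_scheme1a} by testing against constant vector fields. Write $\omega_h := \CRlifting(\uhataverage)$ and let $e_1,\dots,e_{d+1}$ be the faces of $K$ with outward normals $\pmb{n}_i$. Since $\omega_h|_K\in\Pone(K)$, the vector $\nabla\omega_h|_K$ is constant on $K$, so that $\|\nabla\omega_h\|^2_{L^2(\Omega;K)} = |K|\,|\nabla\omega_h|_K|^2$, and it suffices to pin down this constant vector.

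First I would apply the divergence theorem on $K$. Because $\nabla\omega_h|_K$ is constant,
\[
  |K|\,\nabla\omega_h|_K = \int_K \nabla\omega_h\,dx = \int_{\Kboundary}\omega_h\,\pmb{n}\,ds = \sum_{i=1}^{d+1}\pmb{n}_i\int_{e_i}\omega_h\,ds.
\]
The key observation is that, by the definition \eqref{eq:CR_lifting_def} of the CR lifting together with the fact that the average of an affine function over a face equals its value at the face centroid, one has $\frac{1}{|e_i|}\int_{e_i}\omega_h\,ds = \omega_h(c_{e_i}) = \uhathaverageEdgei = \frac{1}{|e_i|}\int_{e_i}\uhath\,ds$; hence $\int_{e_i}\omega_h\,ds = \int_{e_i}\uhath\,ds$ and
\[
  |K|\,\nabla\omega_h|_K = \sum_{i=1}^{d+1}\pmb{n}_i\int_{e_i}\uhath\,ds = \int_{\Kboundary}\uhath\,\pmb{n}\,ds.
\]

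Next I would extract the very same quantity from \eqref{eq:HDG_scheme1a}. Choosing the test function $\qh$ to be an arbitrary constant vector $\pmb{c}$, the volume term $(\uh,\nabla\cdot\qh)_K$ vanishes, leaving
\[
  \pmb{c}\cdot\int_K \ph\,dx = (\ph,\pmb{c})_K = -\langle\uhath,\pmb{c}\cdot\pmb{n}\rangle_{\Kboundary} = -\pmb{c}\cdot\int_{\Kboundary}\uhath\,\pmb{n}\,ds.
\]
Since $\pmb{c}$ is arbitrary this gives $\int_K \ph\,dx = -\int_{\Kboundary}\uhath\,\pmb{n}\,ds$, and combining with the previous display produces the clean identity $\nabla\omega_h|_K = -\frac{1}{|K|}\int_K \ph\,dx$. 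The lemma then follows from a single Cauchy--Schwarz step: $\|\nabla\omega_h\|^2_{L^2(\Omega;K)} = |K|\,|\nabla\omega_h|_K|^2 = \frac{1}{|K|}\bigl|\int_K\ph\,dx\bigr|^2 \le \int_K |\ph|^2\,dx = \|\ph\|^2_{\pmb{L}^2(\Omega;K)}$, the inequality being Jensen applied componentwise.

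The only delicate point is the identity $\int_{e_i}\omega_h\,ds = \int_{e_i}\uhath\,ds$: it is exactly the reason the CR lifting is defined through face-centroid values, and it is what makes the constant equal to $1$ here (no $h_K$-dependent factor survives, unlike in Lemma \ref{lem:gradient_u_estimate}). Everything else is a direct computation, so I do not anticipate a genuine obstacle beyond being careful that the average of the affine function $\omega_h$ over each face coincides with its centroid value.
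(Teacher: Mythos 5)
Your proof is correct, and it is a genuine repackaging of the paper's argument rather than a copy of it. The paper proceeds in energy style: it plugs the single test function $\qh=\nabla\CRlifting(\uhataverage)$ (a constant vector on $K$) into \eqref{eq:HDG_scheme1a}, notes the divergence term vanishes, replaces $\uhath$ by $\uhataverage$ and then by $\CRlifting(\uhataverage)$ on the boundary term (legitimate because $\nabla\CRlifting(\uhataverage)\cdot\pmb{n}$ is facewise constant and the lifting preserves face averages), and integrates by parts to recognize the right-hand side as $-\|\nabla\CRlifting(\uhataverage)\|^2_{\pmb{L}^2(\Omega;K)}$, after which Cauchy--Schwarz gives the bound. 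You instead polarize: testing \eqref{eq:HDG_scheme1a} against \emph{all} constant vectors $\pmb{c}$ (admissible since $\Vhk$ is a broken space, so element-supported constants are valid test functions) yields $\int_K\ph\,dx=-\int_{\Kboundary}\uhath\,\pmb{n}\,ds$, while the divergence theorem plus the centroid/face-average property of the CR lifting identifies the same boundary integral as $|K|\,\nabla\CRlifting(\uhataverage)|_K$. This gives the exact identity $\nabla\CRlifting(\uhataverage)|_K=-\frac{1}{|K|}\int_K\ph\,dx$, from which the lemma follows by Jensen. Both proofs rest on the same three ingredients (constancy of the CR gradient, face-average matching, constant test fields), but yours buys something the paper's does not state: the CR gradient is precisely the negative elementwise mean of $\ph$, a strictly stronger structural fact that immediately explains why the constant in the lemma is exactly $1$; the paper's version is marginally shorter since it never needs to invert for the gradient explicitly. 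All the delicate points you flag check out: the mean of an affine function over a face does equal its value at the face centroid, so $\int_{e_i}\CRlifting(\uhataverage)\,ds=\int_{e_i}\uhath\,ds$ holds as claimed.
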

\begin{proof}
   Selecting $\qh = \nabla\CRlifting(\uhataverage)$ in \eqref{eq:HDG_scheme1a} and noting that $\CRlifting(\uhataverage)\in \Pone(K)$ implies $\nabla\CRlifting(\uhataverage)$ is a constant vector in $K$. Hence,
   \begin{equation*}
       \nabla\cdot \left(\nabla \CRlifting(\uhataverage)\right)=0.
   \end{equation*}
   Consequently, \eqref{eq:HDG_scheme1a} simplifies to:
   \begin{align*}
       (\ph,\nabla\CRlifting(\uhataverage))_K = -\langle\uhath,\nabla\CRlifting(\uhataverage)\cdot\pmb{n}\rangle_{\partial K}.
   \end{align*}
   Given that $\nabla\CRlifting(\uhataverage)\cdot\pmb{n}$ is also constant, it follows that:
   \begin{align*}
       (\ph,\nabla\CRlifting(\uhataverage))_K = -\langle\uhataverage,\nabla\CRlifting(\uhataverage)\cdot\pmb{n}\rangle_{\partial K} = -\langle\CRlifting(\uhataverage),\nabla\CRlifting(\uhataverage)\cdot\pmb{n}\rangle_{\partial K}= - \|\nabla\CRlifting(\uhataverage)\|^2_{\pmb{L}^2(K)},
   \end{align*}
   with the final equality due to integration by parts. 
    Applying the Cauchy-Schwarz inequality to the left-hand side yields the lemma's statement.
\end{proof}

We immediately obtain the following theorems in terms of $\ph$ which are variants of Theorem \ref{thm:Poincare} and Theorem \ref{thm:trace}. The proof follows directly by incorporating Lemma \ref{lem:gradient_u_estimate} and Lemma \ref{lem:Crouieux_Raviart_lifting_estimate} into these theorems.

\begin{thm}\label{thm:Poincare_ph}
    Let $(\uh,\uhath)\in\Xhk$, and $\ph$ is obtained solved through \eqref{eq:HDG_scheme1a}, then the following Poincar\'e inequalities hold:
    \begin{equation}\label{eq:poincare_mean1_ph}
          \|\uh\|^2_{\Ltwo}\lesssim (1+{h}^2)\|\ph\|_{\Ltwovector}^2 +{h} \| \uh-\uhath\|^2_{L^2(\partial\Th)}+  \left(\int_\Omega\CRlifting(\uhataverage)\,dx \right)^2
    \end{equation}
    and
    \begin{equation}\label{eq:poincare_boundary_mean_ph}
          \|\uh\|^2_{\Ltwo}\lesssim (1+{h}^2)\|\ph\|_{\Ltwovector}^2+{h} \| \uh-\uhath\|^2_{L^2(\partial\Th)} + \left(\int_\Gamma\uhath\,ds \right)^2
    \end{equation}
   { where $\Gamma$ is combination of boundary faces that has a positive measure}, namely, $\Gamma=\bigcup_{i=1}^N e_i$ such that $e_i\in\pThb$ and $\{e_1,e_2,\cdots,e_N\}$ are different boundary faces. In addition, the following variant of \eqref{eq:poincare_mean1_ph} expressing in term of integral of $\uh$ also holds:
    \begin{equation}\label{eq:poincare_mean2_ph}
          \|\uh\|^2_{\Ltwo}\lesssim (1+{h}^2)\|\ph\|_{\Ltwovector}^2 +{h}\| \uh-\uhath\|^2_{L^2(\partial\Th)}+ \left(\int_\Omega\uh\,dx \right)^2.
    \end{equation}
\end{thm}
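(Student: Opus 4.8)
The plan is to derive all three inequalities \eqref{eq:poincare_mean1_ph}--\eqref{eq:poincare_mean2_ph} directly from their counterparts in Theorem \ref{thm:Poincare}, substituting the two local identifications of $\ph$ supplied by Lemma \ref{lem:gradient_u_estimate} and Lemma \ref{lem:Crouieux_Raviart_lifting_estimate}. The only two terms on the right-hand sides of \eqref{eq:poincare_mean1}, \eqref{eq:poincare_boundary_mean} and \eqref{eq:poincare_mean2} that still refer to broken gradients are $(h_K)^2\lvert\uh\rvert_{\Hone}^2$ and $\lvert\CRlifting(\uhataverage)\rvert_{\Hone}^2$; everything else (the jump norm and the mean / boundary-mean terms) is already in the target form and can be carried over verbatim, since the mean terms $\left(\int_\Omega\CRlifting(\uhataverage)\,dx\right)^2$, $\left(\int_\Gamma\uhath\,ds\right)^2$ and $\left(\int_\Omega\uh\,dx\right)^2$ are left untouched by the substitution.

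First I would handle $\lvert\CRlifting(\uhataverage)\rvert_{\Hone}^2$. Summing the local bound of Lemma \ref{lem:Crouieux_Raviart_lifting_estimate} over all $K\in\Th$ gives $\lvert\CRlifting(\uhataverage)\rvert_{\Hone}^2=\sum_K\|\nabla\CRlifting(\uhataverage)\|^2_{L^2(\Omega;K)}\le\sum_K\|\ph\|^2_{\pmb{L}^2(\Omega;K)}=\|\ph\|_{\Ltwovector}^2$, which contributes a clean $\|\ph\|_{\Ltwovector}^2$ piece with no accompanying jump term.

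Next I would handle $(h_K)^2\lvert\uh\rvert_{\Hone}^2$. Summing Lemma \ref{lem:gradient_u_estimate} over $K$ yields $\lvert\uh\rvert_{\Hone}^2\lesssim\|\ph\|_{\Ltwovector}^2+\tfrac{1}{h_K}\|\uh-\uhath\|^2_{L^2(\Omega;\partial\Th)}$. The crucial bookkeeping step, and the only place where one must be careful, is that multiplying this by the prefactor $(h_K)^2$ turns the boundary contribution $\tfrac{1}{h_K}\|\uh-\uhath\|^2$ into $h_K\|\uh-\uhath\|^2$, i.e. exactly the order $O(h_K)$ jump term already present. Thus the two jump contributions merge into a single $h_K\|\uh-\uhath\|^2_{L^2(\Omega;\partial\Th)}$, the gradient piece becomes $(h_K)^2\|\ph\|_{\Ltwovector}^2$, and collecting it with the $\|\ph\|_{\Ltwovector}^2$ from the previous step gives the combined factor $(1+(h_K)^2)\|\ph\|_{\Ltwovector}^2$.

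Assembling these substitutions into \eqref{eq:poincare_mean1} produces \eqref{eq:poincare_mean1_ph}, and the identical substitution into \eqref{eq:poincare_boundary_mean} and \eqref{eq:poincare_mean2} yields \eqref{eq:poincare_boundary_mean_ph} and \eqref{eq:poincare_mean2_ph} respectively. There is no genuine obstacle here — the statement is a corollary-style consequence of Theorem \ref{thm:Poincare} together with the two HDG-specific lemmas. The one thing to verify carefully is precisely the $\tfrac{1}{h_K}\cdot(h_K)^2=h_K$ matching, which is what keeps the jump term at the optimal order $O(h_K)$ rather than degrading it, and which is itself a direct payoff of the $\tfrac{1}{h_K}$ scaling in Lemma \ref{lem:gradient_u_estimate} inherited from the discrete trace estimate.
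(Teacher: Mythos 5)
Your proposal is correct and matches the paper's own argument: the paper proves Theorem \ref{thm:Poincare_ph} precisely by inserting Lemma \ref{lem:gradient_u_estimate} and Lemma \ref{lem:Crouieux_Raviart_lifting_estimate} into Theorem \ref{thm:Poincare}, with the same $(h_K)^2\cdot\tfrac{1}{h_K}=h_K$ bookkeeping that merges the two jump contributions and produces the $(1+(h_K)^2)\|\ph\|_{\Ltwovector}^2$ coefficient. Nothing is missing.
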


\begin{thm}\label{thm:trace_p}
    Let $(\uh,\uhath)\in\Xhk$, and $\ph$ is obtained solved through \eqref{eq:HDG_scheme1a}, then the following trace inequalities hold:
    \begin{equation}\label{eq:hybrid_trace_u_ph}
        \|\uh\|_{\Ltwoboundaryb}^2\lesssim (1+{h})\|\ph\|^2_{\Ltwovector}+\|\uh-\uhath\|^2_{L^2(\partial\Th)}+\left(\int_{\Gamma} \uhath\,ds\right)^2
    \end{equation}
    and 
    \begin{equation}\label{eq:hybrid_trace_uhat_ph}
        \|\uhath\|_{\Ltwoboundaryb}^2\lesssim (1+{h})\|\ph\|^2_{\Ltwovector}+\|\uh-\uhath\|^2_{L^2(\partial \Th)}+ \left(\int_{\Gamma} \uhath\,ds\right)^2,
    \end{equation}
     {where $\Gamma$ is combination of boundary faces that has a positive measure}, namely, $\Gamma=\bigcup_{i=1}^N e_i$ such that $e_i\in\pThb$ and $\{e_1,e_2,\cdots,e_N\}$ are different boundary faces.
\end{thm}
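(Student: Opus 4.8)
The plan is to derive both estimates directly from the hybridizable trace inequalities \eqref{eq:hybrid_trace_u}--\eqref{eq:hybrid_trace_uhat} of Theorem \ref{thm:trace}, by replacing the two seminorm terms $|\uh|_{\Hone}^2$ and $|\CRlifting(\uhataverage)|_{\Hone}^2$ appearing on their right-hand sides with quantities controlled by $\|\ph\|_{\Ltwovector}^2$. These two substitutions are supplied respectively by Lemma \ref{lem:gradient_u_estimate} and Lemma \ref{lem:Crouieux_Raviart_lifting_estimate}, both valid locally on each element $K$ once $\ph$ solves \eqref{eq:HDG_scheme1a}. Since Theorem \ref{thm:trace} has already done the analytic work of passing from hybridizable data to the CR lifting, the present argument is essentially a bookkeeping substitution.

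First I would sum the two local lemmas over all $K\in\Th$. Summing Lemma \ref{lem:gradient_u_estimate} yields the global bound
\begin{equation*}
    |\uh|_{\Hone}^2 \lesssim \|\ph\|_{\Ltwovector}^2 + \frac{1}{h_K}\|\uh-\uhath\|_{\Ltwoboundary}^2,
\end{equation*}
where the per-element boundary contributions accumulate exactly into the full-skeleton norm over $\partial\Th$, while summing Lemma \ref{lem:Crouieux_Raviart_lifting_estimate} gives
\begin{equation*}
    |\CRlifting(\uhataverage)|_{\Hone}^2 \leq \|\ph\|_{\Ltwovector}^2.
\end{equation*}
Inserting these into \eqref{eq:hybrid_trace_u}, the crucial observation is that the factor $h_K$ multiplying $|\uh|_{\Hone}^2$ cancels the $1/h_K$ in front of the jump term, so that $h_K|\uh|_{\Hone}^2 \lesssim h_K\|\ph\|_{\Ltwovector}^2 + \|\uh-\uhath\|_{\Ltwoboundary}^2$; this is precisely the step producing an $O(1)$ rather than an $O(1/h_K)$ contribution. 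The $(1+h_K)|\CRlifting(\uhataverage)|_{\Hone}^2$ term becomes $(1+h_K)\|\ph\|_{\Ltwovector}^2$. Collecting both $\|\ph\|_{\Ltwovector}^2$ contributions under a single $(1+h_K)$ factor, absorbing $\|\uh-\uhath\|_{\Ltwoboundaryb}^2 \leq \|\uh-\uhath\|_{\Ltwoboundary}^2$, and leaving the data term $\left(\int_\Gamma\uhath\,ds\right)^2$ untouched, delivers \eqref{eq:hybrid_trace_u_ph}.

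For \eqref{eq:hybrid_trace_uhat_ph} I would mimic the closing step of Theorem \ref{thm:trace}, using the triangle inequality $\|\uhath\|_{\Ltwoboundaryb}^2 \lesssim \|\uh\|_{\Ltwoboundaryb}^2 + \|\uh-\uhath\|_{\Ltwoboundaryb}^2$ and then applying the bound for $\|\uh\|_{\Ltwoboundaryb}^2$ just established; the extra term $\|\uh-\uhath\|_{\Ltwoboundaryb}^2$ is dominated by $\|\uh-\uhath\|_{\Ltwoboundary}^2$, which already sits on the right-hand side. There is no genuine obstacle here, so the only point demanding care is tracking the powers of $h_K$ through the substitution to confirm that the resulting constants are uniform in the mesh size, which is exactly what the cancellation in the preceding paragraph guarantees.
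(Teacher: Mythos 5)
Your proposal is correct and takes essentially the same route as the paper, which proves Theorem \ref{thm:trace_p} precisely by inserting Lemma \ref{lem:gradient_u_estimate} and Lemma \ref{lem:Crouieux_Raviart_lifting_estimate} into Theorem \ref{thm:trace}. Your explicit tracking of the $h_K\cdot\frac{1}{h_K}$ cancellation in the jump term and the closing triangle inequality for $\|\uhath\|_{\Ltwoboundaryb}^2$ just spells out the bookkeeping the paper leaves implicit.
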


\subsection{Stability Analysis}

To the end, we address an application of the mathematical tools developed in this study to examine the stability of the HDG formulation \eqref{eq:HDG_scheme} for the mixed boundary Poisson equation, specifically when the stabilization term is chosen to be a constant.

A crucial inquiry we pursue is whether it is possible to obtain an energy estimate for the HDG solution that does not depend on the mesh size $h_K$, without assuming additional regularity for the solution to the Poisson equation beyond the minimum requirements for the data $f$, $u_D$, and $u_N$. Existing research typically assumes the existence of a solution in a "strong" sense (with varying interpretations of "strong") and employs a projection-based analysis to verify the numerical solution's stability \cite{cockburn2010projection}. In a previous work \cite{jiang2023stability}, we initially proved the stability of the numerical solution with only minimal regularity, utilizing a translation argument. In this paper, our objective is to demonstrate the same result using a distinct approach, leveraging the mathematical instruments we have developed and making it easier to adapt to other type of problems.

More specifically, we seek to determine: for a given mesh $\Th$, can $\|\uh\|_{\Ltwo}$ and $\|\ph\|_{\Ltwovector}$ be uniformly bounded by a constant solely dependent on the input data and independent of the mesh size? This question is affirmatively addressed in the subsequent theorem:

\begin{thm}
    \label{thm:HDG_stability} Let $(\ph,\uh,\uhath)\in \Vhk\times\Uhk\times\Fhk$ solve \eqref{eq:HDG_scheme} with $f\in L^2(\Omega), u_D\in L^2(\Gamma_D), u_N\in L^2(\Gamma_N)$, then the following estimate hold:
    \begin{equation}
        \label{eq:HDG_energy_bound}
        \|\ph\|^2_{\Ltwovector}+\tau\|\uh-\uhath\|^2_{\Ltwoboundary}+\|\uhath\|_{L^2(\Gamma_D)}^2\leq C(f,u_D,u_N)
    \end{equation}
    where the constant $C(f,u_D,u_N)$ depends on $\|f\|_{L^2(\Omega)}, \|u_D\|_{L^2(\Gamma_D)}$, $\|u_N\|_{\Gamma_N}$ and the domain but independent of the mesh.
\end{thm}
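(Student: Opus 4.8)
The plan is to begin from the energy identity \eqref{eq:hdg_energy} of Lemma \ref{lem:energy_HDG}, which already isolates the target energy $E := \|\ph\|^2_{\Ltwovector} + \tau\|\uh-\uhath\|^2_{\Ltwoboundary}$ and equates it to the data functional $(f,\uh)_{\Th} + \langle u_N,\uhath\rangle_{\Gamma_N}$. Applying Cauchy--Schwarz bounds these two terms by $\|f\|_{L^2(\Omega)}\|\uh\|_{\Ltwo}$ and $\|u_N\|_{L^2(\Gamma_N)}\|\uhath\|_{\Ltwoboundaryb}$ respectively, using $\Gamma_N\subseteq\pThb$ for the second. The whole problem therefore reduces to controlling $\|\uh\|_{\Ltwo}$ and $\|\uhath\|_{\Ltwoboundaryb}$ by $E$ together with the data, and then closing the resulting inequality by absorption.

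The essential device is to invoke the boundary-mean Poincaré inequality \eqref{eq:poincare_boundary_mean_ph} and the trace inequality \eqref{eq:hybrid_trace_uhat_ph} with the specific choice $\Gamma=\Gamma_D$, which is admissible because the mesh decomposes $\Gamma_D$ into boundary faces and $|\Gamma_D|>0$. Both inequalities then carry the same boundary-mean quantity $\left(\int_{\Gamma_D}\uhath\,ds\right)^2$, and this is exactly where the Dirichlet estimate \eqref{eq:Dirichlet_estimate} enters: Cauchy--Schwarz on $\Gamma_D$ gives $\left(\int_{\Gamma_D}\uhath\,ds\right)^2 \le |\Gamma_D|\,\|\uhath\|^2_{L^2(\Omega;\Gamma_D)} \le |\Gamma_D|\,\|u_D\|^2_{L^2(\Gamma_D)}$, a purely data-dependent bound. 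Since $\|\uh-\uhath\|^2_{L^2(\Omega;\partial\Th)} = \|\uh-\uhath\|^2_{\Ltwoboundary} \le E/\tau$ and $\|\ph\|^2_{\Ltwovector}\le E$, and the mesh coefficients $1+(h_K)^2$ and $h_K$ are uniformly bounded by a constant depending only on $\mathrm{diam}(\Omega)$, I would obtain
\begin{equation*}
\|\uh\|^2_{\Ltwo} + \|\uhath\|^2_{\Ltwoboundaryb} \lesssim E + |\Gamma_D|\,\|u_D\|^2_{L^2(\Gamma_D)}.
\end{equation*}

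Feeding this back into the Cauchy--Schwarz bounds on the right-hand side of \eqref{eq:hdg_energy} produces an inequality of the form $E \lesssim \bigl(\|f\|_{L^2(\Omega)}+\|u_N\|_{L^2(\Gamma_N)}\bigr)\sqrt{E+C_D}$, where $C_D := |\Gamma_D|\,\|u_D\|^2_{L^2(\Gamma_D)}$. The concluding step is a Young's inequality absorption: writing $a\sqrt{E+C_D}\le \tfrac12(E+C_D)+\tfrac12 a^2$ lets me move $\tfrac12 E$ to the left and deduce $E \le C_D + C\bigl(\|f\|^2_{L^2(\Omega)}+\|u_N\|^2_{L^2(\Gamma_N)}\bigr)$, a bound depending only on the data and the domain. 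Adding the Dirichlet estimate \eqref{eq:Dirichlet_estimate} once more supplies the $\|\uhath\|^2_{L^2(\Omega;\Gamma_D)}$ term appearing on the left of \eqref{eq:HDG_energy_bound}, which finishes the argument.

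The main obstacle, and the reason the earlier machinery is indispensable, is mesh-independence. A direct use of the classical discrete Poincaré inequality \eqref{eq:discrete_poincare} would retain a jump contribution scaling like a negative power of $h_K$, which would blow up the constant as $h_K\to 0$; it is precisely the refined HDG inequalities of Theorems \ref{thm:Poincare_ph} and \ref{thm:trace_p}, whose coefficients degrade at worst like $O(h_K)$, that keep every constant bounded uniformly in the mesh. The only other delicate points are ensuring that the identical boundary-mean term appears in both the Poincaré and the trace bounds, so a single application of \eqref{eq:Dirichlet_estimate} disposes of it, and verifying that the absorption constant genuinely never sees the mesh size.
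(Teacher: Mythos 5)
Your proposal is correct and follows essentially the same route as the paper: start from the energy identity \eqref{eq:hdg_energy}, apply Cauchy--Schwarz to the data terms, invoke the HDG Poincaré inequality \eqref{eq:poincare_boundary_mean_ph} and trace inequality \eqref{eq:hybrid_trace_uhat_ph} with the choice $\Gamma=\Gamma_D$, control the boundary-mean term via $\left(\int_{\Gamma_D}\uhath\,ds\right)^2\le|\Gamma_D|\,\|\uhath\|^2_{L^2(\Omega;\Gamma_D)}$, and close by absorption. The only cosmetic difference is how the Dirichlet data enters: the paper adds \eqref{eq:HDG_scheme1c} tested with $\hat{v}_h=\uhath$ to the energy identity, so that $\|\uhath\|^2_{L^2(\Omega;\Gamma_D)}$ joins the left-hand energy and a single Cauchy--Schwarz absorbs all three terms at once, whereas you substitute the already-proved bound \eqref{eq:Dirichlet_estimate} to make the boundary-mean term purely data-dependent and then absorb via Young's inequality; the two are essentially equivalent, and yours is if anything slightly more modular. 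The one genuine omission is the degenerate case $|\Gamma_D|=0$: the theorem does not assume $\Gamma_D$ has positive measure, and your choice $\Gamma=\Gamma_D$ is only admissible when it does. The paper devotes a final paragraph to this pure Neumann situation, appending a uniqueness constraint such as $\int_\Omega\uh\,dx=0$ or $\int_{\partial\Omega}\uhath\,ds=0$ to the scheme and rerunning the same argument with the mean-type inequality \eqref{eq:poincare_mean2_ph} (or \eqref{eq:poincare_boundary_mean_ph} with $\Gamma=\partial\Omega$) in place of the $\Gamma_D$-based one; you should add a sentence to cover this case.
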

\begin{proof}
    Recall Lemma \ref{lem:energy_HDG}, the following identity hold:
      \begin{equation}\label{eq:HDG_energy_theorem_use}
      \|\ph\|^2_{\Ltwovector}+\tau\|\uh-\uhath\|^2_{\Ltwoboundary}=(f,\uh)_{\Th} + \langle u_N,\uhath\rangle_{\Gamma_N}.
    \end{equation}
   
    We firstly assume that $\Gamma_D$ has a positive measure. In this case, the only thing we need to do is to bound $\|\uh\|_{\Ltwo}$ and $\|\uhath\|_{L^2(\Gamma_N)}$ as
    \begin{equation*}
        \left\lvert(f,\uh)_{\Th} + \langle u_N,\uhath\rangle_{\partial\Th}\right\rvert \leq \|f\|_{L^2(\Omega)}\,\|\uh\|_{\Ltwo} + \|u_N\|_{L^2(\Gamma_N)}\,\|\uhath\|_{L^2(\Gamma_N)}.    \end{equation*}
Choosing $\Gamma$ in the Poincar\'e inequality \eqref{eq:poincare_boundary_mean_ph} and the trace inequality \eqref{eq:hybrid_trace_uhat_ph} both to be $\Gamma_D$. As
\begin{equation*}
    \left(\int_{\Gamma_D} \uhath\,ds\right)^2\leq \left|\Gamma_D\right|\,\left(\int_{\Gamma_D}(\uhath)^2\,ds\right)= \left|\Gamma_D\right|\,\|\uhath\|_{L^2(\Gamma_D)}^2,
\end{equation*}
we can rewrite \eqref{eq:poincare_boundary_mean_ph} and \eqref{eq:hybrid_trace_uhat_ph} as 
\begin{equation}
    \begin{aligned}
        \|\uh\|^2_{\Ltwo}&\lesssim (1+{h}^2)\|\ph\|_{\Ltwovector}^2+{h} \| \uh-\uhath\|^2_{L^2(\partial\Th)} + \left(\int_{\Gamma_D}\uhath\,ds \right)^2\\
      &  \lesssim \|\ph\|_{\Ltwovector}^2+ \tau\| \uh-\uhath\|^2_{L^2(\partial\Th)} +\|\uhath\|_{L^2(\Gamma_D)}^2
    \end{aligned}
\end{equation}
and
\begin{equation}
    \begin{aligned}
        \|\uhath\|_{\Ltwoboundaryb}^2&\lesssim (1+{h})\|\ph\|^2_{\Ltwovector}+\|\uh-\uhath\|^2_{L^2(\pThb)}+ \left(\int_{\Gamma} \uhath\,ds\right)^2\\
        &\lesssim \|\ph\|^2_{\Ltwovector}+\tau\|\uh-\uhath\|^2_{L^2(\partial\Th)}+ \|\uhath\|_{L^2(\Gamma_D)}^2.
    \end{aligned}
\end{equation}
Using these estimates, together with $\eqref{eq:HDG_energy_theorem_use}$ and \eqref{eq:HDG_scheme1c} with $\hat{v}_h =\uhath$, we get
\begin{equation*}
    \begin{aligned} &\quad\,\,\|\ph\|^2_{\Ltwovector}+\tau\|\uh-\uhath\|^2_{\Ltwoboundary}+\|\uhath\|_{L^2(\Gamma_D)}^2\\
    &=(f,\uh)_{\Th} + \langle u_N,\uhath\rangle_{\Gamma_N} + \langle u_D,\uhath\rangle_{\Gamma_D}\\
    &\lesssim \left(\|f\|_{L^2(\Omega)}+\|u_N\|_{L^2(\Gamma_N)}+\|u_D\|_{L^2(\Gamma_D)}\right)\left(\|\ph\|^2_{\Ltwovector}+\tau\|\uh-\uhath\|^2_{\Ltwoboundary}+\|\uhath\|_{L^2(\Gamma_D)}^2 \right)^{\frac{1}{2}}.
    \end{aligned}
\end{equation*}
It gives the desired estimate \eqref{eq:HDG_energy_bound}.

When $|\Gamma_D|=0$, the scenario simplifies to a pure Neumann problem. According to classical elliptic theory, the solution to a Neumann problem is unique up to a constant \cite{evans2022partial}. Hence, to ensure uniqueness in the numerical scheme, one could introduce an additional condition such as:
\begin{equation*}
    \int_{\partial \Omega} \uhath\,ds=0\,\,\,\,\quad 
    \text{or }\,\,\,\,\quad 
    \int_\Omega \uh\,dx = 0.
\end{equation*}
Incorporating this condition into the numerical scheme eliminates the arbitrary constant, thereby securing a unique solution. The analysis procedure previously discussed remains applicable, as the Poincaré and trace inequalities would work to derive \eqref{eq:HDG_energy_bound}. The details are omitted here to avoid redundancy, as it repeats the earlier process.

\end{proof}

\section*{\textbf{Acknowledgements}}
We are grateful for the valuable discussions with Guosheng Fu, Jay Gopalakrishnan, Jiannan Jiang, and Noel Walkington. We also extend our thanks to the National Science Foundation (grant number: NSF DMS-1929284) for their support of this research.

\bibliographystyle{abbrv}
\bibliography{reference}

\end{document}